\tikzset{
  tachado/.style={
    postaction=decorate,
    decoration={
      markings,
      mark=at position 0.5 with {
        \draw[-] (-2pt,-2pt) -- (2pt,2pt);
      }
    }
  }
}
\newtheorem{theorem}{Theorem}[section]
\newtheorem{lemma}[theorem]{Lemma}
\newtheorem{coro}[theorem]{Corollary}
\newtheorem{prop}[theorem]{Proposition}
\newtheorem{defi}[theorem]{Definition}
\newtheorem{note}[theorem]{Note}
\newtheorem{example}[theorem]{Example}
\newtheorem{question}[theorem]{Question}
\journal{a journal}
\begin{document}

\begin{frontmatter}

\title{Connectivity degrees of complements of closed sets in continua}

\author{Mauricio Chac\'on-Tirado}
\ead{mauricio.chacon@correo.buap.mx}
\author{C\'esar Piceno}
\ead{cesarpicman@gmail.com}

\address{Benem\'erita Universidad Aut\'onoma de Puebla, Facultad de Ciencias F\'isico Matem\'aticas, Avenida San Claudio y 18 Sur, Colonia San Manuel. Edificio FM1-101B, Ciudad Universitaria, C.P. 72570,
Puebla, Mexico.}

\begin{abstract}
In the literature, various types of points and meager sets whose complements are connected have been studied, such as colocally connected points, non-weak cut points/sets, non-block points/sets, shore points/sets, etc. We extend that study, in the following way: considering a continuum $X$ and a natural number $n$, we investigate sets $A \in 2^X$ meeting the criterion that  $X - A$ has at most $n$ components, and we introduce degrees of connectivity of the complement of \(A\). When \(n=1\) and \(A\) is meager or singleton, these new definitions are equivalent to the known definitions of non-cut points/sets.
\end{abstract}

\begin{keyword}
Continuum \sep hyperspace \sep non-cut points \sep shore points\sep non-block points \sep shore points \sep aposyndesis.
\MSC[2020] Primary 54B20\sep  54F15.
\end{keyword}

\end{frontmatter}


\section{Introduction}
\label{introduction}
One of the main topics of interest in topology is being able to determine whether a space is connected or not, and,
when a space $X$ is connected, it is interesting to determine how ``strongly connected'' $X$ is. In the case of continua, various types of points and sets whose complements are connected have been studied, and some ``degree of connectivity'' of these complements has also been investigated. One of the most relevant works in this regard was published by R.L. Moore \cite{moore}, where the existence of non-cut points in all continua is demonstrated. Another important result concerning the degree of connectivity of the complement of a point in a continuum is the one obtained by Bing \cite{scc}, which states that for any point of a nondegenerate metrizable continuum, there is a proper continuumwise connected dense subset containing that point. Some of the articles that can be consulted on the topic are
\cite{noncutshore}, \cite{onblockers1}, \cite{non-cut}, \cite{blockers}, \cite {minc},  \cite{nall} and \cite{whyburn}.

If the space is not connected, we are also interested in knowing if it is composed of a finite number of components and how ``strongly disconnected'' is the space. In a continuum, it is of particular interest to study sets that cut the space, and if they do (or not), we are also interested in knowing the degree of connectivity of their complements. For example, in Section \ref{xhn(x)}, we show that in a locally connected continuum $X$, if $A \in 2^X$ and $X-A$ has a finite number of components, then each component is continuumwise connected.

Besides this introduction, this paper contains 4 more sections. In Section \ref{definitions}, we provide the definitions that we will use throughout the paper. In particular, we define degrees of connectivity, which we call $n$-$Q1$ to $n$-$Q7$, and $n$-$Qo$, being $n$-$Q1$ the stronger one and $n$-$Q7$ the weakest one. Something we wish to emphasize is the uniformity of the definitions for classifying the degree of connectivity of a space, which makes some results straightforward.

In Section \ref{generalproperties}, for each degree of connectivity we consider the hyperspace of closed sets whose complements have that degree of connectivity,  we explore the relationships between those hyperspaces, whose elements we call non-$n$-cut sets. We provide conditions to ensure when a complement that is $n$-$Q7$ implies also that  is $n$-$Q1$. We offer tools to discover new non-$n$-cut sets from others and ultimately examine the Borel classes of some hyperspaces of non-$n$-cut sets.

In Section \ref{continuousfunctions}, we delve into investigating the types of functions that preserve non-$n$-cut sets under their image or preimage.

Finally, in Section \ref{xhn(x)}, we investigate the relationships between a continuum $X$ and its hyperspaces of non-$n$-cut sets. Among other interesting results, we provide a characterization of the arc. We discover that for irreducible continua, certain hyperspaces of non-$n$-cut sets coincide. Additionally, we prove that if $X$ is aposyndetic with respect to $A$ and the complement of $A$ has at most $n$ components, then for every neighborhood $U$ containing $A$, there exists a neighborhood $V \subset U$ of $A$ such that the complement of $V$ has at most $n$ components.

\section{Definitions and notation}
\label{definitions}
In this paper all spaces are metric. The set \(\mathbb N\) represents the positive integers. Given a subset $A$ of a space $X$, the closure and the interior
of $A$ are denoted by $cl_X(A)$ and $int_X(A)$, respectively, and we omit the subindex when we feel there is no risk of confusion regarding our space.
A \textit{map} is a continuous function. A \textit{continuum} is a compact space with more than one point.
A continuum $X$ is \textit{aposyndetic at} $p$ \textit{with respect to} $A$, where $p \in X$ and $A \subset X$,
provided that there is a continuum $B\subset X - A$ such that $p \in int_X(B)$. A continuum $X$ is \textit{aposyndetic with respect to} $A$ if $X$ is aposyndetic at $p$ with respect to \(A\) for all $p \in X-A$.
A continuum $X$ is \textit{aposyndetic at} $p$ provided that $X$ is aposyndetic at $p$ with respect to each singleton $\{q\} \subset X - \{p\}$. A continuum $X$ is \textit{mutually aposyndetic} if for two distinct points $p,q \in X$, there exist two subcontinua $A$ and $B$ of $X$ such that $p \in \text{int}(A)$, $q \in \text{int}(B)$, and $A \cap B = \emptyset$. 

A compact metric space $X$ is \textit{indecomposable} provided that each subcontinuum of \(X\) has empty interior. A continuum $X$ is said to be \textit{irreducible about} $A \subset X$ provided that no proper subcontinuum of $X$ contains $A$. A continuum $X$ is said to be \textit{irreducible} provided that $X$ is irreducible about $\{p,q\}$ for some $p, q \in X$, in which case we say $X$ is \textit{irreducible between} $p$ \textit{and} $q$.
A space $Y$ is \textit{continuumwise connected} if any pair of points is contained in a continuum $X\subset Y$. Let $\mathcal S^1 = \{x \in \mathbb R^2 : ||x||=1\}$.

Given a non-empty space $X$ and \(n\in\mathbb N\), we consider the following \textit{hyperspaces} of $X$:

$$2^X= \{A \subset X : A \text{ is non-empty and compact} \},$$
$$M(X)= \{A \in 2^X : A \text{ has empty interior}\},$$
$$ C_n(X)= \{A \in 2^X : A \text{ has at most $n$ components}\},$$
$$D_0(X)= \{A \in 2^X : A \text{ has dimension }0\},$$
and
$$F_n(X)= \{A \in 2^X : A \text{ has at most $n$ elements}\}.$$ 
These hyperspaces are endowed with the Hausdorff metric.
We write $C(X)$ instead of  $C_1(X)$, the elements of \(C(X)\) are called \emph{subcontinua} of \(X\).

Clearly $F_n(X) \subset C_n(X) \subset 2^X$ and
$F_1(X)$ is homeomorphic to $X$. 

For a finite collection  $X_1,  \ldots , X_m$ of subsets of $X$, we define 
$\left\langle X_1,\ldots , X_m \right\rangle$ as the set
$\{A \in 2^X: A \subset X_1 \cup \ldots \cup
X_m \text{ and } A \cap X_i \neq \emptyset \text{ for each } i\in\{1, \ldots, m\}\}$.

It is known that if $X_1,  \ldots , X_m$ are closed subsets of $X$, then\\
$\left\langle X_1,  \ldots , X_m \right\rangle$ is closed in $2^X$
and that the collection of all subsets of the form $\left\langle U_1,  \ldots , U_m \right\rangle$,
where $U_1, \ldots , U_m$ are open subsets of $X$, is a base for the topology of $2^X$ (see \cite{cnofXII}).


The objective of the following definition is to introduce the degree of connectivity of a space.

\begin{defi}\label{principal2}
Given a non-empty space $X$ and $n\in\mathbb N$, we say that $X$ is:
\begin{enumerate}

\item n-Q1 \label{colocalconnect2} if there exists $B\in F_n(X)$, such that for each $x\in X$, there exists a continuum $D\subset X$ such that \(x\in int(D)\) and $B \cap D\neq\emptyset$;

\item n-Q2 \label{nwcn2} if there exists $B \in F_n(X)$ such that, for each $x\in X$, there exists a continuum $D\subset X$ such that $x\in D$ and $B \cap D \neq \emptyset$;

\item n-Q3  \label{n points2} if for every $x \in X$, there exists $B \in F_n(X)$ with $x \in B$, such that for every non-empty open set $U$ of \(X\), there exists a continuum $D\subset X$ such that \(B \cap D \neq \emptyset\neq D \cap U\);



\item n-Qo  \label{dnbopens2} if there exists \(B\in F_n(X)\) such that for every non-empty open set \(U\) of \(X\), there exists a continuum \(D\subset X\) such that \(B\cap int(D)\neq\emptyset\neq int(D)\cap U\);

\item n-Q4  \label{somen2} if there exists $B \in F_n(X)$, such that for every non-empty open set $U$ of \(X\), there exists a continuum $D\subset X$ such that $B \cap D \neq \emptyset\neq D\cap U$;

\item n-Q5 \label{nshore2} if for each finite family $\mathcal U$ of non-empty open sets contained in \(X\), there exists $D \in C_n(X)$ such that $D \cap U \neq \emptyset$, for every $U \in \mathcal U$;

\item n-Q6 \label{nscn2}if for each collection of \(n+1\) non-empty open sets \(U_1, \ldots, U_{n+1}\) of \(X\), there exists \(D \in C_n(X)\) such that \(D\cap U_i\neq\emptyset\) for every $i\in\{1\dots,n+1\}$.

\item n-Q7 \label{ncn2}if \(X\) has at most \(n\) components.


\end{enumerate}
    
\end{defi}

\begin{note}\label{relaciones}
Clearly, for each \(n\in\mathbb N\) and \(m\in\{o\}\cup\{1,\dots,6\}\), being \(n\)-$Qm$ implies being \((n+1)\)-\(Qm\), and being \(n\)-$Q(m+1)$, if \(m\in\{1,\dots,5\}\). Also, being \(n\)-$Q1$ implies being \(n\)-\(Qo\), being \(n\)-\(Qo\) implies being \(n\)-$Q4$, being \(n\)-\(Q4\) implies being \((n+1)\)-$Q3$, and being \(n\)-\(Q6\) implies having at most \(n\) components. In the following figure, what has been stated here is represented.
\end{note}

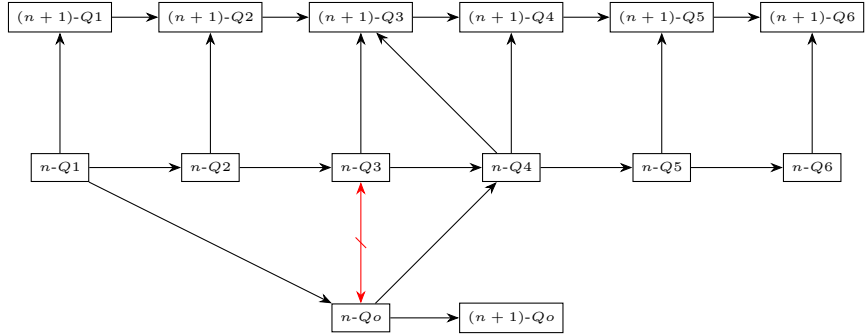
\begin{figure}[htb]
  \centering
  \small 
  \begin{tikzpicture}[node distance=2cm, >=Stealth, every node/.style={ draw, text centered, font=\tiny}]
    \node  (n1q1)                 {$(n+1)$-$Q1$};
    \node (n1q2) [right of=n1q1] {$(n+1)$-$Q2$};
    \node (n1q3) [right of=n1q2] {$(n+1)$-$Q3$};
    \node (n1q4) [right of=n1q3] {$(n+1)$-$Q4$};
    \node (n1q5) [right of=n1q4] {$(n+1)$-$Q5$};
    \node (n1q6) [right of=n1q5] {$(n+1)$-$Q6$};
       
    \node (nq1) [below of=n1q1]{$n$-$Q1$};
    \node (nq2) [right of=nq1] {$n$-$Q2$};
    \node (nq3) [right of=nq2] {$n$-$Q3$};
    \node (nq4) [right of=nq3] {$n$-$Q4$};
    \node (nq5) [right of=nq4] {$n$-$Q5$};
    \node (nq6) [right of=nq5] {$n$-$Q6$};
    \node (nqo) [below of=nq3] {$n$-$Qo$};
    \node (n1qo) [right of=nqo] {$(n+1)$-$Qo$};


    \draw[->] (n1q1) -- (n1q2);
    \draw[->] (n1q2) -- (n1q3);
    \draw[->] (n1q3) -- (n1q4);
    \draw[->] (n1q4) -- (n1q5);
    \draw[->] (n1q5) -- (n1q6);

    \draw[->] (nq1) -- (n1q1);
    \draw[->] (nq2) -- (n1q2);
    \draw[->] (nq3) -- (n1q3);
    \draw[->] (nq4) -- (n1q3);
    \draw[->] (nq4) -- (n1q4);
    \draw[->] (nq5) -- (n1q5);
    \draw[->] (nq6) -- (n1q6);
 
    \draw[->] (nq1) -- (nq2);
    \draw[->] (nq2) -- (nq3);
    \draw[->] (nq3) -- (nq4);
    \draw[->] (nq4) -- (nq5);
    \draw[->] (nq5) -- (nq6);
    \draw[->] (nq1) -- (nqo);
    \draw[->] (nqo) -- (nq4);

    \draw[->] (nqo) -- (n1qo);

    \draw[tachado, red, >=Stealth, <->] (nqo) -- (nq3);
  \end{tikzpicture}
  \caption{Relationships between degrees of connectivity.}
  \label{diagramarelacion}
\end{figure}
Notice that in Figure \ref{diagramarelacion}, it is indicated that there is no relationship between $n$-$Qo$ and $n$-$Q3$. Counterexamples to this fact are Example \ref{ejnbo} for $n$-$Qo \nRightarrow n$-$Q3$ and the Dyadic Solenoid for $n$-$Q3 \nRightarrow n$-$Qo$. On the other hand, we were unable to prove or deny $n$-$Q2 \Rightarrow n$-$Qo$. A question related to this, is Question \ref{nwcynbo}.

The following definitions are provided to classify sets based on the degree of connectivity of their
complements. Most of them are generalizations of those presented in \cite{noncutshore} and \cite{non-cut}.

\begin{defi}\label{principal}
Given a non-degenerate compact metric space $X$ and $n \in \mathbb N$, an element $A \in 2^X$ is said to be:
\begin{enumerate}

\item \label{colocalconnect} \textit{set colocal connectedness of degree \(n\)} of $X$ provided that \(A=X\) or $X-A$ is n-Q1;

\item \label{nwcn} \textit{not a weak cut set of degree \(n\)} of $X$ provided that \(A=X\) or $X-A$ is n-Q2;

\item \label{n points} \textit{nonblock set of degree \(n\)} of $X$ if \(A=X\) or $X-A$ is n-Q3;

\item \label{dnbopens} a set that \textit{does not block opens of degree \(n\)} of $X$ provided that \(A=X\) or  $X-A$ is n-Qo;

\item \label{somen} \textit{weak nonblock set of degree \(n\)} of $X$ provided that \(A=X\) or $X-A$ is n-Q4;

\item \label{nshore} a \textit{shore set of degree \(n\)} of $X$ provided that \(A=X\) or $X-A$ is n-Q5;

\item \label{nscn} \textit{not a strong center set of degree \(n\)} of \(X\) provided that \(A=X\) or $X-A$ is n-Q6.


\end{enumerate}
    
\end{defi}


We consider the following subspaces of $2^X$, 
these are called
\textit{hyperspaces of non-cut sets of degree \(n\)} of $X$:
\begin{enumerate}[I.]
\item ${CC}_n(X)= \{A \in 2^X :  A \text{ is a set of colocal connectedness  of degree \(n\) of \(X\)}\}$;
\item ${NWC}_n(X)= \{A \in 2^X: A \text{ is not a weak cut set of degree \(n\) of }X\}$;
\item ${NB}_n(X)= \{A \in 2^X: A \text{ is a nonblock set of degree \(n\) of }X\}$;
\item ${NBO}_n(X)= \{A \in 2^X: A \text{ does not block opens of degree \(n\) of }X\}$;
\item ${NB}_n^*(X)= \{A \in 2^X: A \text{ is a weak nonblock set of degree \(n\) of }X\}$;
\item $S_n(X)= \{A \in 2^X: A \text{ is a shore set of degree \(n\) of }X\}$;
\item ${NSC}_n(X)= \{A \in 2^X: A \text{ is not a strong center set of degree \(n\) of }X\}$;
\item ${NC}_n(X)= \{A \in 2^X:  X-A \text{ has at most \(n\) components}\}$.
\end{enumerate}

\begin{note}
For a continuum $X$, according to our definitions and the definitions P1, P2, P3, P4, P5  given in \cite{noncutshore}, $p$ is a P1 point if and only if $X-\{p\}$ is 1-Q1; $p$ is a P2 point if and only if $X-\{p\}$ is 1-Q2; $p$ is a P3 point if and only if $X-\{p\}$ is 1-Q4; $p$ is a P4 point if and only if $X-\{p\}$ is 1-Q5, and $p$ is a P5 point if and only if $X-\{p\}$ is 1-Q6.
\end{note}

\begin{note}\label{definicionesmagras}
The sets \(NWC(X)\), \( NB(F_1(X))\), \(NB^*(F_1(X))\), \(S(X)\), \(NC(X)\) defined in \cite{non-cut}, coincide with the sets $ {NWC}_1(X)\cap M(X)$, $ {NB}_1(X) \cap M(X)$, $ {NB}^*_1(X) \cap M(X)$, $S_1(X)\cap M(X)$ and $ {NC}_1(X)\cap M(X)$, respectively.
\end{note}


\section{General properties of non-$n$-cut sets}\label{generalproperties}

This section presents several key results in the context of hyperspaces of non-$n$-cut sets of continua. These results establish relationships between different families of non-$n$-cut sets, shedding light on their structural properties and interconnections.

The following theorem is immediate from Note \ref{relaciones}.
\begin{theorem}\label{contentions}
Given a continuum $X$ and $n \in \mathbb N$, the following conditions hold:
\begin{enumerate}
    \item If $H_n(X)$ represents an hyperspace of non-$n$-cut sets, then $H_n(X) \subset H_{n+1}(X)$;
    \item ${CC}_n(X) \subset  {NWC}_n(X)\subset  {NB}_n(X)\subset {NB}_n^*(X)\subset  S_n(X)\subset {NSC}_n(X)\subset  {NC}_n(X);$      \item $ {CC}_n(X) \subset {NBO}_n(X)\subset  {NB}_n^*(X)$; and
    \item $NB_n^*(X) \subset NB_{n+1}(X)$.
\end{enumerate}
\end{theorem}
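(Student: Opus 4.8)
The plan is to observe that every hyperspace of non-$n$-cut sets is defined by the same template: $A$ belongs to it precisely when $A = X$ or the complement $X - A$ satisfies one of the connectivity degrees $n$-$Qm$. Consequently each asserted containment collapses to a single implication between connectivity degrees, and all of these implications are already recorded in Note \ref{relaciones}. Since the clause ``$A = X$'' appears in every definition, any $A$ equal to $X$ lies in all the hyperspaces simultaneously; so in each containment it suffices to treat $A \neq X$, where membership becomes a pure statement about $X - A$.

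For part (2) I would read the chain off directly from the implication $n$-$Qm \Rightarrow n$-$Q(m+1)$ (valid for $m \in \{1, \ldots, 5\}$): for $A \neq X$, if $X - A$ is $n$-$Q1$ it is $n$-$Q2$, if $n$-$Q2$ then $n$-$Q3$, and so on through $n$-$Q5 \Rightarrow n$-$Q6$, giving $CC_n(X) \subset NWC_n(X) \subset \cdots \subset NSC_n(X)$. The last containment $NSC_n(X) \subset NC_n(X)$ is not an instance of that implication, which stops at $m = 5$; it instead uses the separate statement in Note \ref{relaciones} that being $n$-$Q6$ implies having at most $n$ components, which is exactly the defining property of $NC_n(X)$.

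Part (3) follows the same pattern from the two implications $n$-$Q1 \Rightarrow n$-$Qo$ and $n$-$Qo \Rightarrow n$-$Q4$, yielding $CC_n(X) \subset NBO_n(X) \subset NB_n^*(X)$; and part (4) is the single implication $n$-$Q4 \Rightarrow (n+1)$-$Q3$, so an $A \neq X$ in $NB_n^*(X)$ has $X - A$ of type $(n+1)$-$Q3$, i.e. $A \in NB_{n+1}(X)$.

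For part (1) I would split according to the type of $H_n$. For the seven hyperspaces indexed by $m \in \{o, 1, \ldots, 6\}$ the implication $n$-$Qm \Rightarrow (n+1)$-$Qm$ of Note \ref{relaciones} applies verbatim. The remaining case is $NC_n(X)$, whose defining property ``$X - A$ has at most $n$ components'' is not covered by that implication; there I simply note that at most $n$ components is a fortiori at most $n + 1$ components, giving $NC_n(X) \subset NC_{n+1}(X)$. I anticipate no real obstacle: the whole theorem is the bookkeeping of matching each containment to the corresponding arrow in Figure \ref{diagramarelacion}, with the only care needed at the two endpoint cases ($m = 7$) falling outside the generic implications.
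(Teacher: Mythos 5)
Your proposal is correct and matches the paper exactly: the paper simply declares the theorem ``immediate from Note \ref{relaciones},'' and your argument is precisely the bookkeeping that justifies that claim, including the two endpoint cases for $NC_n(X)$ and the uniform handling of the clause $A=X$.
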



The following example shows that the inclusion ${NBO}_1(X) \subset {NB}_1(X)$ is false in general. 

\begin{example}\label{ejnbo}
The \emph{harmonic fan} is the set \(([0,1]\times\{0\})\cup(\cup\{(x,\frac xn):x\in[0,1],n\in\mathbb N\})\), considered as a subspace of \(\mathbb R^2\).
Let $X$ be the harmonic fan and let $A=\{(\frac12,0)\}$. Then, $A\in {NBO}_1(X)$ and $A\notin {NB}_1(X)$.
\end{example}

From Example \ref{ejnbo} and 3 from Theorem \ref{contentions}, we know that there are continua $X$ such that $NWC_1(X) \neq NBO_1(X)$. What we do not know is the following:
\begin{question}\label{nwcynbo}
Does there exist a continuum $X$ such that $NWC_1(X) \not\subset NBO_1(X)$?    
\end{question}



\begin{note}
The examples ($P2 \setminus P1$), ($P4 \setminus P3$), ($P5 \setminus P4$) and ($P6 \setminus P5$) presented in \cite{noncutshore} satisfy that  $\bigcup_{n=1}^\infty {CC}_n(X) \subsetneq {NWC}_1(X)$, $\bigcup_{n=1}^\infty {NB^*}_n(X) \subsetneq S_1(X)$, $\bigcup_{n=1}^\infty {S}_n(X) \subsetneq NSC_1(X)$ and $\bigcup_{n=1}^\infty {NSC}_n(X) \subsetneq NC_1(X)$ respectively. While the examples a) and b) of Remark 3.3 of \cite{non-cut} satisfy that $\cup_{n=1}^{\infty} NWC_n(X) \subsetneq NB_1(X)$ and $NB_1(X) \subsetneq NB_1^*(X)$ respectively. Additionally, by Example \ref{ejnbo} and Theorem \ref{contentions}.3, we have that $CC_1(X) \subset NBO_1(X)$ can also be proper. Finally, in the case of the dyadic solenoid $X$, we have that $NBO_1(X) \subset NB_1^*(X)$ is also a proper inclusion.
\end{note}

\begin{defi}\label{colocalcerrado}
Let $X$ be a continuum and $A \in 2^X$. We say that $A$ is colocally connected of  degree closed if $A=X$ or there exists a set $B \in 2^X$ satisfying the following conditions: $B \cap A = \emptyset$ and for every $y \in X - A$, there exists a continuum $D \subset X - A$ such that $y \in \text{int}(D)$ and $D \cap B \neq \emptyset$.

We denote the set of all subsets of $X$ that are colocally connected  of degree closed by $CC_{2^X}$.
\end{defi}

\begin{theorem}\label{colocalclosed}
    For every continuum $X$, $CC_{2^X}= \bigcup_{n=1}^\infty CC_n(X)$.
\end{theorem}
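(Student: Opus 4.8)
The plan is to prove the equality by establishing the two inclusions separately, after first disposing of the trivial case $A=X$ in both directions: if $A=X$ then $A$ lies in $CC_{2^X}$ and in every $CC_n(X)$ directly by the ``$A=X$'' clauses in Definition \ref{colocalcerrado} and Definition \ref{principal}. So throughout I would assume $A\neq X$, and the whole content becomes a comparison between the two witnessing conditions: in $CC_n(X)$ the witness $B$ must lie in $F_n(X-A)$ (finite, at most $n$ points), while in $CC_{2^X}$ it need only be an arbitrary element of $2^X$ with $B\cap A=\emptyset$. One small preliminary remark I would make is that since $A$ is closed, $X-A$ is open, so for any $D\subset X-A$ the interiors $int_X(D)$ and $int_{X-A}(D)$ coincide; this lets me freely match the interior in Definition \ref{colocalcerrado} with the one in the definition of $n$-$Q1$ applied to the space $X-A$.

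The inclusion $\bigcup_{n=1}^\infty CC_n(X)\subset CC_{2^X}$ is the routine one. If $A\in CC_n(X)$ with $A\neq X$, then $X-A$ is $n$-$Q1$, witnessed by some $B\in F_n(X-A)$. I would simply observe that $B$ is a non-empty finite set contained in $X-A$, hence a non-empty compact set with $B\cap A=\emptyset$, i.e.\ $B\in 2^X$; and the defining property of $n$-$Q1$ for $B$ is exactly the property required of the witness in Definition \ref{colocalcerrado}. Thus $A\in CC_{2^X}$.

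The substantive inclusion is $CC_{2^X}\subset\bigcup_{n=1}^\infty CC_n(X)$, where I must upgrade an arbitrary compact witness to a finite one. Given $A\in CC_{2^X}$ with witness $B\in 2^X$, $B\cap A=\emptyset$, I would first use compactness: for each $b\in B$ (note $b\in X-A$) fix a continuum $D_b\subset X-A$ with $b\in int(D_b)$ and $D_b\cap B\neq\emptyset$; then $\{int(D_b):b\in B\}$ is an open cover of the compact set $B$, from which I extract a finite subcover and let $B'=\{b_1,\ldots,b_k\}$ be the corresponding centers, so $B'\in F_k(X-A)$. The crux is then a concatenation-of-continua argument showing $B'$ witnesses $k$-$Q1$: for arbitrary $y\in X-A$ take its continuum $D_y$ (with $y\in int(D_y)$ and $D_y\cap B\neq\emptyset$), pick $c\in D_y\cap B$, and choose $i$ with $c\in int(D_{b_i})\subset D_{b_i}$. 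Then $D_y\cup D_{b_i}$ is a continuum in $X-A$ (two continua sharing $c$), it contains $y$ in its interior because $int(D_y)\subset int(D_y\cup D_{b_i})$, and it meets $B'$ at $b_i$. Hence $X-A$ is $k$-$Q1$ and $A\in CC_k(X)$.

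The main obstacle I anticipate is exactly this reduction from a compact to a finite witness: one cannot naively keep only finitely many points of $B$, since that may sever the connection between some $y$ and the witness. The point of the argument is that compactness of $B$ must be exploited through the cover by the \emph{interiors} of the continua $D_b$, and that an arbitrary $y$ must be routed to $B'$ through the union $D_y\cup D_{b_i}$ rather than through any single prescribed continuum. The only facts needing care—that such a union is again a subcontinuum of $X-A$ and that its interior still contains $y$—are immediate from $D_y\subset D_y\cup D_{b_i}$ and from the elementary fact that the union of two continua meeting in a common point is a continuum.
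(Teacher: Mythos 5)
Your proof is correct and follows essentially the same route as the paper: cover the compact witness $B$ by the interiors of the continua $D_b$, extract a finite subcover, and route an arbitrary $y$ to the finite witness via the union $D_y\cup D_{b_i}$, which is a continuum still containing $y$ in its interior. The only cosmetic difference is that you take the finite witness to be the centers $b_i\in B$ of the subcover, whereas the paper covers $B$ by interiors indexed over all of $X-A$ and then selects a point of $D_i\cap B$ from each member of the subcover; both choices work for the same reason.
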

\begin{proof}
The contention $\bigcup_{n=1}^\infty CC_n(X) \subset CC_{2^X}$ is clear. For the converse contention, 
    let $A \in CC_{2^X}$ and $B$ be a closed set satisfying the conditions of Definition \ref{colocalcerrado}. Then, for each $y \in X-A$, there exists a continuum $D_y \subset X-A$ such that $y \in int(D_y)$ and $D_y \cap B \neq \emptyset$. Hence, $\mathcal D= \{int(D_y): y \in X-A\}$ is an open cover of $B$. Given that $B \in 2^X$, there exists finite subcover $\{int(D_1), \ldots, int(D_n)\} \subset \mathcal D$ of $B$. For each $i \leq n$, choose $x_i \in D_i \cap B$. Let $B_n=\{x_1, \ldots, x_n\}$. Observe that for every $y \in X-A$, $D_y \subset X-A$ is such that $y \in int(D_y)$ and $D_y \cap B \neq \emptyset$. Hence, $D_y \cap D_i \neq \emptyset$ for some $i \leq n$. Therefore, $D=D_y \cup D_i$ is a continuum such that $D \subset X-A$, $y \in int(D)$ and $D \cap B_n \neq \emptyset$. Hence, $A \in CC_n(X)$. In conclusion, $CC_{2^X} \subset \bigcup_{n=1}^\infty CC_n(X)$.
\end{proof}

\begin{lemma}\label{lemaCCn}
    Let $X$ be a continuum and let $A \in CC_n(X)$, for some \(n\in\mathbb N\). If \(X-A\) is connected, then \(A\in CC_1(X)\).
\end{lemma}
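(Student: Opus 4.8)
The plan is to dispose of the trivial case $A=X$ immediately (then $A\in CC_1(X)$ by definition) and otherwise work inside the space $Y:=X-A$, which is nonempty, connected by hypothesis, and $n$-$Q1$ because $A\in CC_n(X)$. Let $B=\{b_1,\dots,b_m\}\in F_n(Y)$ (so $m\le n$) be a witness to $Y$ being $n$-$Q1$, and fix a single point $b\in B$. I claim that the singleton $\{b\}$ already witnesses that $Y$ is $1$-$Q1$, which yields $A\in CC_1(X)$ at once. Throughout, all interiors are taken in $Y$.

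The main device is the set
$$V=\{x\in Y:\text{there is a continuum }D\subseteq Y\text{ with }x\in int_Y(D)\text{ and }b\in D\},$$
that is, the points that can be ``routed to $b$'' by a single continuum having them in its interior. Proving that $Y$ is $1$-$Q1$ with witness $\{b\}$ is precisely proving that $V=Y$. Since $Y$ is connected and $b\in V$ (apply the $n$-$Q1$ property to the point $b$ itself to get a continuum $D$ with $b\in int_Y(D)$, whence $b\in D$), it suffices to show that $V$ is simultaneously open and closed in $Y$.

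Openness is routine: if $x\in V$ via a continuum $D$, then the same $D$ witnesses membership for every point of the open set $int_Y(D)$, so $int_Y(D)\subseteq V$. The crux, and the step I expect to be the main obstacle, is closedness, since this is where connectedness is really exploited, through a merging-of-continua argument. Given $x\in cl_Y(V)$, I would apply the $n$-$Q1$ property to $x$ to obtain a continuum $D\subseteq Y$ with $x\in int_Y(D)$; as $int_Y(D)$ is a neighbourhood of $x$ meeting $V$, choose $x'\in int_Y(D)\cap V$, which carries its own continuum $D'\subseteq Y$ with $x'\in int_Y(D')$ and $b\in D'$. Because $x'\in D\cap D'$, the union $D\cup D'$ is again a continuum in $Y$, and it satisfies $x\in int_Y(D)\subseteq int_Y(D\cup D')$ together with $b\in D'\subseteq D\cup D'$; hence $x\in V$.

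The only remaining care is bookkeeping that I would verify explicitly: that $D\cup D'$ is a continuum in the paper's sense (compact as a finite union of compacta, connected because $D\cap D'\ni x'$, and with more than one point since $D$ already has), and that the monotonicity $int_Y(D)\subseteq int_Y(D\cup D')$ is invoked correctly. Once $V$ is known to be clopen and nonempty in the connected space $Y$, we conclude $V=Y$, so $\{b\}$ witnesses that $Y$ is $1$-$Q1$ and therefore $A\in CC_1(X)$.
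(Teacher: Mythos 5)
Your proof is correct and follows essentially the same route as the paper's: both consider the set of points of $X-A$ that can be joined to a fixed witness point $b$ by a subcontinuum of $X-A$ containing them in its interior, and both exploit openness of that set together with the connectedness of $X-A$ via the same merging-of-continua step. The only difference is presentational: you show this set is clopen and nonempty for a single fixed $b$, while the paper shows that the family of all such sets indexed by $b\in B$ is an open cover of $X-A$ whose members are pairwise equal or disjoint.
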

\begin{proof}
    If \(A=X\), we are done. Assume \(A\neq X\).
    Let $B \in F_n(X)$ as in Definition \ref{principal2}.\ref{colocalconnect2} for $X-A$. For each \(b\in B\), let \(X_b=\{y\in X-A: \) there exists a continuum \(D\subset X-A \text{ such that }b\in D\text{ and }y\in int(D)\}\). Notice that each \(X_b\) is open and \(\{X_b:b\in B\}\) is a cover of \(X-A\). Assume \(b,c\in B\) satisfy \(X_b\cap X_c\neq\emptyset\), and let \(y\in X_b\cap X_c\) and \(z\in X_c\), thus there exist \(D,E,F\)  subcontinua of \(X-A\) such that \(y\in int(D), b\in D, y\in int(E), c\in E\), and \(z\in int(F), c\in F\), thus \(D\cup E\cup F\) is a subcontinuum of \(X-A\) containing \(z\) in its interior and containing \(b\), thus \(z\in X_b\), therefore \(X_c\subset X_b\), and analogously \(X_b\subset X_c\). In conclusion \(\{X_b:b\in B\}\) is a partition of \(X-A\), since \(X-A\) is connected, we have that \(X_b=X-A\) for some \(b\in B\). Thus the set \(\{b\}\) satisfies Definition \ref{principal}.\ref{colocalconnect} to show that \(A\in CC_1(X)\).
\end{proof}

Examples \ref{ejnbo} and \ref{dosenos} illustrate that we cannot replace the sets $CC_n(X)$ and $CC_1(X)$ in Lemma \ref{lemaCCn} with $NWC_n(X)$ and $NWC_1(X)$, $NB_n(X)$ and $NB_1(X)$, or $NBO_n(X)$ and $NBO_1(X)$, respectively.
\begin{theorem}\label{nmenorquem}
    Let \(H_n(X)\) represent a hyperspace of non-cut sets of degree \(n\) of a continuum \(X\). If \(CC_n(X)=H_n(X)\) for some \(n>1\), then \(CC_m(X)=H_m(X)\) for each \(m<n\).
\end{theorem}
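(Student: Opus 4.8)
The plan is to prove the nontrivial inclusion $H_m(X)\subset CC_m(X)$ for each $m<n$; the reverse inclusion $CC_m(X)\subset H_m(X)$ holds for \emph{every} hyperspace of non-cut sets by parts 2 and 3 of Theorem \ref{contentions}, so equality will follow. The engine of the argument is a refinement of Lemma \ref{lemaCCn} that keeps track of the number of components of the complement instead of only handling the connected case.

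First I would establish the following strengthening: \emph{if $A\in CC_n(X)$, $A\neq X$, and $X-A$ has at most $k$ components with $k\leq n$, then $A\in CC_k(X)$.} The proof reuses the construction in Lemma \ref{lemaCCn}. Fix $B$ as in Definition \ref{principal2} witnessing that $X-A$ is $n$-$Q1$, and for each $b\in B$ define $X_b$ exactly as there. The computation given in Lemma \ref{lemaCCn} shows verbatim that the distinct nonempty members of $\{X_b:b\in B\}$ form a partition of $X-A$ into relatively open sets; being complements of unions of open sets, each is also relatively closed, hence clopen in $X-A$. The crucial observation is that I do \emph{not} need the $X_b$ to be connected: since every component of $X-A$ is connected while the parts are open and pairwise disjoint, each component is contained in a single part, so the number $j$ of distinct parts is at most the number $k$ of components. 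Choosing one index $b_i\in B$ with $X_{b_i}$ equal to the $i$-th part for $i=1,\dots,j$, the set $B'=\{b_1,\dots,b_j\}\in F_j(X)\subset F_k(X)$ witnesses that $X-A$ is $k$-$Q1$: any $x\in X-A$ lies in some part $X_{b_i}$, so there is a continuum $D\subset X-A$ with $x\in int(D)$ and $b_i\in D$, whence $D\cap B'\neq\emptyset$. Thus $A\in CC_k(X)$.

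With this refinement in hand the theorem follows quickly. Let $m<n$ and take $A\in H_m(X)$. By part 1 of Theorem \ref{contentions} (applied repeatedly) we get $H_m(X)\subset H_n(X)=CC_n(X)$, so $A\in CC_n(X)$. By part 2 of Theorem \ref{contentions} we have $H_m(X)\subset NC_m(X)$, so $X-A$ has at most $m$ components. If $A=X$ then $A\in CC_m(X)$ by definition; otherwise the refinement (with $k=m\leq n$) yields $A\in CC_m(X)$. Hence $H_m(X)\subset CC_m(X)$, and combined with the free inclusion we obtain $CC_m(X)=H_m(X)$.

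I expect the main obstacle to be precisely the second step: resisting the temptation to assert that the sets $X_b$ are the components of $X-A$. They need not be connected, and the correct invariant is that they are clopen and absorb whole components; the bound $j\leq k$ is exactly what lets the witnessing set shrink from size $n$ down to size at most $m$. Everything else is bookkeeping with the inclusions recorded in Theorem \ref{contentions}, namely $CC_m\subset H_m\subset NC_m$ and the monotonicity $H_m\subset H_n$ in the degree.
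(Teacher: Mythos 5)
Your proof is correct and takes essentially the same route as the paper's: both arguments come down to the clopen partition $\{X_b\}$ from the proof of Lemma \ref{lemaCCn} together with the bound $H_m(X)\subset NC_m(X)$ on the number of components of $X-A$. The only difference is organizational --- the paper applies Lemma \ref{lemaCCn} to $X-U_i$ for each component $U_i$ of $X-A$ and assembles one witness point per component, whereas you inline and strengthen the lemma by observing that the partition has at most as many parts as $X-A$ has components; the mathematical content is the same.
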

\begin{proof}
    Let $m < n$. By Theorem \ref{contentions}, $CC_m(X) \subset H_m(X) \subset H_n(X)= CC_n(X)$. Let $A \in H_m(X)$, by Theorem \ref{contentions}, $A \in NC_m(X)$. Let $U_1 \ldots U_k$ be the components of $X-A$; given that $X-U_i \in NC_1(X)$ for each $i \leq k$, $X-U_i \in CC_1(X)$ (Lemma \ref{lemaCCn}). Hence, for each $i \leq k$, there exists $x_i \in U_i$ such that for each $y \in U_i$, there exists a continuum $D \subset U_i$ such that $y \in int(D)$ and $x_i \in D$. Therefore, for $B=\{x_1, \ldots , x_k\}$ and each $y \in X-A$, there exists a continuum $D$ such that $y \in int(D)$, $D \cap A = \emptyset$ and $D \cap B \neq \emptyset$. Given that $k \leq m$, $A \in CC_m(X)$. We conclude that $H_m(X)=CC_m(X)$.
\end{proof}

The following example shows that the converse of Theorem \ref{nmenorquem} is not true.

\begin{example}\label{dosenos}
    For the continuum $X=\{(x,\sin(\frac1x)):x\in [-1,0)\cup(0,1]\}\cup (\{0\}\times[-1,1])$, we have $CC_1(X)=NWC_1(X)=NB_1(X)=NB^*_1(X)=S_1(X)=NSC_1(X)$.
    However $CC_n(X) \neq NWC_n(X)$ and $CC_n(X) \neq NB_n(X)$, for each \(n\geq 3\), and  $CC_n(X) \neq NB^*_n(X)$, $CC_n(X) \neq S_n(X)$, $CC_n(X) \neq NSC_n(X)$ for each \(n\geq 2\).
\end{example}

Given the previous results, it is natural to ask the following.

\begin{question}\label{1implicam}
 For which spaces \(X\) and for which hyperspaces of non-cut sets of degree 1 $H_1(X)$, the following implication holds: if $CC_1(X)=H_1(X)$,  then  $CC_n(X)=H_n(X)$ for every $n \in \mathbb N$?
\end{question}
Partial answers to Question \ref{1implicam} are given in Theorem \ref{h1=cn1} and Corollary \ref{localyconnectn}.

\begin{theorem}\label{foreachcomp}
    Let $X$ be a continuum and $A \in 2^X$. If $A \in CC_n(X)$, then for each component $K$ of $X-A$, we have $A\in CC_1(A\cup K)$.
\end{theorem}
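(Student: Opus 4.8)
The plan is to show that for each component $K$ of $X - A$, the set $A$ witnesses colocal connectedness of degree $1$ for the continuum $A \cup K$, i.e. that $(A\cup K) - A = K$ is $1$-$Q1$. First I would dispose of the trivial case: if $A = X$ then $X - A = \emptyset$ has no components, so there is nothing to prove, and hence I may assume $A \neq X$ and that $K$ is a nonempty component of $X - A$. Note that $A \cup K$ is indeed compact: since $K$ is a component of the open set $X - A$, its closure $cl_X(K)$ is contained in $K \cup A$, and in fact $A \cup K = A \cup cl_X(K)$ is closed in $X$, hence a continuum provided it is connected, which it is because $cl_X(K)$ is connected and meets $A$ (as $K$ is a component of $X-A$ whose boundary lies in $A$). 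The complement $(A\cup K) - A$ equals $K$.

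Next I would extract the witness for $A \in CC_n(X)$. By Definition \ref{principal}.\ref{colocalconnect} applied to $A \neq X$, the set $X - A$ is $n$-$Q1$, so there is a finite set $B \in F_n(X)$ such that for each $x \in X - A$ there is a continuum $D \subset X - A$ with $x \in int_X(D)$ and $D \cap B \neq \emptyset$. The key observation is that the continuum $D$ supplied for a point $x \in K$ must itself lie entirely inside $K$: indeed $D$ is a connected subset of $X - A$ containing the point $x \in K$, and $K$ is the component of $X - A$ containing $x$, so $D \subset K$. In particular $D \cap B \subset K \cap B$, so the part of $B$ that is relevant to $K$ is precisely $B_K := B \cap K$, which is nonempty whenever $K$ contains a point, and $B_K \in F_n(A \cup K)$ with $B_K \subset K$.

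Then I would verify the colocal connectedness condition inside $A \cup K$. Fix $x \in K = (A\cup K) - A$. The continuum $D$ obtained above satisfies $D \subset K \subset A \cup K$, and $x \in int_X(D)$. The one subtlety is that the definition requires $x \in int_{A\cup K}(D)$ rather than $x \in int_X(D)$; but since $int_X(D) \cap (A\cup K) \subset int_{A\cup K}(D)$ and $x \in int_X(D) \cap K$, we get $x \in int_{A\cup K}(D)$ directly. Moreover $D \cap B_K = D \cap B \cap K = D \cap B \neq \emptyset$ because $D \subset K$. Thus $B_K$ witnesses that $K$ is $1$-$Q1$: a single finite set works, and the degree is at most $n$, but the content of the claim is the degree $1$ conclusion, which follows because Lemma \ref{lemaCCn} applies — the space $K = (A\cup K) - A$ is connected (being a component), and $A \in CC_n(A\cup K)$ via $B_K$, so by Lemma \ref{lemaCCn} we conclude $A \in CC_1(A \cup K)$.

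The main obstacle I anticipate is the interplay between interiors taken in $X$ versus in the subspace $A \cup K$, and ensuring that the continuum $D$ produced for a point of $K$ genuinely stays within $K$ so that $B_K$ is a legitimate degree-$n$ witness in $A \cup K$. The first point is handled by the elementary fact that $int_X(D) \cap (A\cup K) \subset int_{A\cup K}(D)$; the second is exactly the connectedness-of-components argument. Once $A \in CC_n(A\cup K)$ is established with $(A\cup K)-A = K$ connected, invoking Lemma \ref{lemaCCn} immediately upgrades the degree from $n$ to $1$, which is the desired conclusion.
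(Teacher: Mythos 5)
Your proof is correct and follows essentially the same route as the paper's: restrict the witness $B$ to $B\cap K$, observe that any continuum $D\subset X-A$ through a point of $K$ must lie in $K$, conclude $A\in CC_n(A\cup K)$, and then apply Lemma \ref{lemaCCn} to the connected set $K$ to drop the degree to $1$. The extra care you take with subspace interiors and with $A\cup K$ being a continuum is sound but not a difference in method.
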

\begin{proof}
    If \(A=X\), we are done. Assume \(A\neq X\).
    Let $A \in CC_n(X)$, notice $A \in NC_n(X)$. Let $B \in F_n(X)$ as in Definition \ref{principal2}.\ref{colocalconnect2} for $X-A$ and let $K$ be a component of $X-A$. Let $y \in K$, thus there exists a continuum $D\subset X-A$ such that $B \cap D \neq \emptyset$, $y \in int(D)$. Since \(K\) is a component of \(X-A\) and \(y\in K\), then \(D\subset K\), therefore \(B\cap K\neq\emptyset\). Let \(B'=B\cap K\), thus \(B'\in F_n(X)\) satisfies Definition \ref{principal2}.\ref{colocalconnect2} for $K$ to show that \(A\in CC_n(A\cup K)\); by Lemma \ref{lemaCCn}, \(A\in CC_1(A\cup K)\).
\end{proof}

\begin{theorem}\label{laultimaref}
    Let \(H_n(X)\) represent a hyperspace of non-cut sets of degree \(n\) of a continuum \(X\). 
    If $\bigcup_{n=1}^\infty CC_n(X)=\bigcup_{n=1}^\infty H_n(X)$, then $CC_1(X)=H_1(X)$.
\end{theorem}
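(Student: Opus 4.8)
The plan is to prove the two inclusions $CC_1(X)\subset H_1(X)$ and $H_1(X)\subset CC_1(X)$ separately, the first being automatic and the second carrying all the content. The forward inclusion requires no hypothesis: by Theorem \ref{contentions}, parts 2 and 3, $CC_n(X)$ sits at the bottom of every containment chain among the hyperspaces of non-$n$-cut sets, so in particular $CC_1(X)\subset H_1(X)$ no matter which hyperspace $H_n$ denotes. Thus it remains only to establish $H_1(X)\subset CC_1(X)$ under the assumption $\bigcup_{n=1}^\infty CC_n(X)=\bigcup_{n=1}^\infty H_n(X)$.

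The key structural observation I would isolate first is that every one of the listed hyperspaces of non-$n$-cut sets is contained in $NC_n(X)$. This follows again from Theorem \ref{contentions}: part 2 gives the chain terminating in $NC_n(X)$, while part 3 places $NBO_n(X)$ inside $NB_n^*(X)$, which is already inside $NC_n(X)$. Hence in all cases $H_1(X)\subset NC_1(X)$, which means that for every $A\in H_1(X)$ the complement $X-A$ has at most one component.

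With this in hand I would take an arbitrary $A\in H_1(X)$ and show $A\in CC_1(X)$. If $A=X$, then $A\in CC_1(X)$ by definition, so I may assume $A\neq X$; then $X-A$ is nonempty and, having at most one component, is connected. Since $A\in H_1(X)\subset\bigcup_{n=1}^\infty H_n(X)=\bigcup_{n=1}^\infty CC_n(X)$ by hypothesis, there exists some $N\in\mathbb N$ with $A\in CC_N(X)$. Because $X-A$ is connected, Lemma \ref{lemaCCn} applies and upgrades $A\in CC_N(X)$ to $A\in CC_1(X)$. This gives $H_1(X)\subset CC_1(X)$ and, together with the first paragraph, the desired equality $CC_1(X)=H_1(X)$.

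I do not expect a genuine obstacle here; the proof is a short assembly of already-proved facts. The only point requiring care is recognizing where the content actually lives: the hypothesis on the infinite unions is used solely to produce a single index $N$ with $A\in CC_N(X)$, whereas the real mechanism is the interplay between the blanket containment $H_1(X)\subset NC_1(X)$, which forces $X-A$ to be connected, and Lemma \ref{lemaCCn}, which promotes $CC_N$-membership to $CC_1$-membership exactly under that connectedness. The one thing to double-check when writing the final version is that the containment $H_1(X)\subset NC_1(X)$ is verified for \emph{every} admissible meaning of $H_n$, including $NBO_n$, which is handled through part 3 rather than the main chain.
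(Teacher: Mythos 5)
Your proof is correct and follows essentially the same route as the paper's: take $A\in H_1(X)$, use Theorem \ref{contentions} to get $A\in NC_1(X)$ (so $X-A$ is connected), use the hypothesis on the unions to get $A\in CC_N(X)$ for some $N$, and then promote to $CC_1(X)$. The only cosmetic difference is that the paper cites Theorem \ref{foreachcomp} at the last step, whereas you cite Lemma \ref{lemaCCn} directly, which is the lemma underlying that theorem anyway.
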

\begin{proof}
    Let $A \in H_1(X)$. Therefore, $A \in NC_1(X)$ and $A \in CC_m(X)$ for some $m \in \mathbb N$. Hence, Theorem \ref{foreachcomp} implies $A \in CC_1(X)$.
\end{proof}

\begin{theorem}\label{equivcomp}
    Let \(H_n(X)\) represent a hyperspace of non-cut sets of degree \(n\) of a continuum \(X\). Let $A\in 2^X$ be such that \(X-A\) has exactly  \(n\) components. Then, \(A\in  H_n(X)\) if and only if for each component \(K\) of \(X-A\), \( A\in  H_1(A\cup K)\).
\end{theorem}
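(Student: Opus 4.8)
The plan is to fix the notation $X-A = K_1 \cup \cdots \cup K_n$ for the $n$ distinct components and to exploit the fact that, since there are only finitely many of them, each $K_i$ is clopen in $X-A$ (a component is always closed, and with finitely many components each is also the complement of the union of the others). Because $X-A$ is open in $X$, this makes each $K_i$ open in $X$ as well, so that for any $D \subset K_i$ the interior of $D$ computed in $K_i$, in $X-A$, and in $X$ all coincide; moreover any continuum $D \subset X-A$, being connected, is contained in a single $K_i$, and for compact $D \subset X-A$ the trace $D \cap K_i$ is clopen in $D$. These three observations are what let the global data on $X-A$ and the local data on each $A \cup K_i$ be translated into one another, and I would record them first as a short preliminary.

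For the degrees defined through an existential witness $B \in F_n$ (namely $n$-$Q1$, $n$-$Q2$, $n$-$Qo$, $n$-$Q4$, and also $n$-$Q3$, where $B$ depends on the chosen point and contains it), the engine is a pigeonhole argument. In the forward direction I would take a witness $B$ for $X-A$ and show it must meet every component: feeding the definition an open set (or a point) localized inside $K_j$ produces a continuum that meets $K_j$, hence lies in $K_j$, hence forces $B \cap K_j \neq \emptyset$. Since $|B| \le n$ and there are exactly $n$ components, $B$ meets each $K_i$ in exactly one point $b_i$, and $\{b_i\}$ is then a degree-one witness for $K_i$ (for $n$-$Q3$ the same forcing pins $b_i$ to the prescribed point $x$ itself; the $n$-$Q1$ case is also Theorem \ref{foreachcomp}). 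The backward direction is the reverse gluing: one point $b_i$ per component assembles into $B = \{b_1,\dots,b_n\} \in F_n(X-A)$, and any point or open set of $X-A$ is handled inside whichever component it lands in. The equality of interiors from the preliminary is what makes the $n$-$Qo$ case go through.

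For the covering degrees $n$-$Q5$ (shore) and $n$-$Q6$ ($NSC$), which carry no witness set, the forward direction uses a padding trick: given the opens to be met inside a single $K_i$, I would adjoin the open sets $K_j$ for $j \neq i$ and apply the degree-$n$ hypothesis to the enlarged finite family; the resulting $D \in C_n(X-A)$ must then meet all $n$ components, so by counting it has exactly one subcontinuum in each, and the one lying in $K_i$ is the desired degree-one witness. The backward direction for $n$-$Q5$ is a straightforward union of one subcontinuum per component. The degree $n$-$Q7$ is immediate, since both sides hold automatically once $X-A$ has exactly $n$ components (each $K_i$ being connected).

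The main obstacle I anticipate is the backward direction for $n$-$Q6$, where the naive ``one subcontinuum per component'' fails: we are handed $n+1$ open sets but allowed only a $C_n$ object, so a single component may be forced to account for three or more of the opens while a subcontinuum of it, via $1$-$Q6$, reaches only two. The fix is again pigeonhole, but applied to the opens rather than to a witness: among the $n+1$ opens, two of them, say $U_k$ and $U_l$, must meet a common component $K_i$; I would join them by a subcontinuum of $K_i$ furnished by $1$-$Q6$ and then meet each of the remaining $n-1$ opens by a single point, producing an element of $C_n(X-A)$ meeting all $n+1$ opens. I would also stress throughout that the hypothesis \emph{exactly} $n$ components (rather than \emph{at most} $n$) is precisely what powers every pigeonhole step.
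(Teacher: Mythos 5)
Your proposal is correct and follows essentially the same route as the paper's proof: the pigeonhole argument forcing an $F_n$ witness to meet each of the exactly $n$ components in a single point, the padding of a family of opens with the remaining components for the $n$-$Q5$ and $n$-$Q6$ forward directions, and the pigeonhole on the $n+1$ opens (two must meet a common component, joined by a $1$-$Q6$ subcontinuum plus singletons) for the $n$-$Q6$ backward direction. The preliminary clopen-ness observations you record are left implicit in the paper but are exactly what its argument relies on.
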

\begin{proof}
Clearly, $A \in NC_n(X)$ and for each component $K$ of $X-A$, $A \in NC_1(A \cup K)$.

Let $K$ be a component of $X-A$.

Assume $A \in CC_n(X), NWC_n(X), NB_n^*(X)$ or $NBO_n(X)$. We prove that \(A\in CC_1(A\cup K), NWC_1(A \cup K), NB_1^*(A\cup K)\), or \(NBO_1(A\cup K)\), respectively. Let $B \in F_n(X-A)$ be a set that satisfies the respective definitions for $X-A$. Notice that $B$ must intersect each component of $X-A$, and the intersection of \(B\) with each component consists of exactly one point. Thus $K \cap B$ consists of only one point, and $K \cap B$ satisfies the respective definitions for $A$ to assert that $A \in CC_1(A \cup K), NWC_1(A \cup K), NB_1^*(A \cup K)$ or $NBO_1(A \cup K)$, respectively.

Assume $A \in NB_n(X)$. We prove that \(A\in NB_1(A\cup K)\). For each $x\in K$, we can find $B_x \in F_n(X-A)$ that satisfies the Definition \ref{principal2}.\ref{n points2} for $X-A$ and $x\in B_x$. Notice that $B_x$ must intersect each component of $X-A$, and the intersection of \(B_x\) with each component consists of exactly one point. Thus $B_x \cap K=\{x\}$ is the set that asserts that $A \in NB_1(A \cup K)$.

Assume $A \in S_n(X)$. We prove that \(A\in S_1(A\cup K)\). Let $\mathcal U$ be a finite family of open sets of \((A\cup K)-A=K\). Let $\mathcal V=\mathcal U \cup \{R: R $ is a component of $ X-A$ and \(R\neq K\}\). Given that $\mathcal V$ is a finite family of open sets contained in $X-A$, there exists $D \in C_n(X-A)$ such that $D$ intersects each element of the family $\mathcal V$. Since $D$ intersects each component of \(X-A\), $D$ has exactly $n$ components. Thus $D\cap K\in C_1(K)$ and $D \cap K$ intersects each element of $\mathcal U$. Therefore, $A \in S_1(A \cup K)$.

Assume $A \in NSC_n(X)$. Let $U,V$ be two open sets such that $U \cup V \subset K$. Let $\mathcal V=\{U, V\} \cup \{R: R $ is a component of $ X-A$ and \(R\neq K\}\). Given that $\mathcal V$ is a family of $n+1$ open sets contained in $X-A$, there exists $D \in C_n(X-A)$ such that $D$ intersects each element of the family $\mathcal V$. Since $D$ intersects each component of \(X-A\), $D$ has exactly $n$ components. Thus $D\cap K\in C_1(K)$ and $D \cap K$ intersects $U$ and $V$. Therefore, $A \in NSC_1(A \cup K)$.

Let $K_1, \ldots K_n$ be the components of $X-A$. Assume that for each $K_i$, $A \in CC_1(A \cup K_i)$, so \(K_i\) is a $1$-$Q1$ space.  For each $i\in \{1,\dots,n\}$, choose $x_i \in K_i$ such that $\{x_i\}$ witnesses that $K_i$ is a $1$-$Q1$ space. Then, $B=\{x_1, \ldots x_n\}$ witnesses $\bigcup_{i=1}^n K_i$ is a $n$-$Q1$ space, so $A \in CC_n(X)$. Analogously, if $A \in NWC_1(A \cup K_i)$ for each $i\in\{1,\dots n\}$, or $A\in NB_1^*(A \cup K_i)$ for each $i\in\{1,\dots n\}$, or $A\in NBO_1(A \cup K_i)$ for each $i\in\{1,\dots n\}$, then $A \in NWC_n(X),$ or $A\in NB_n^*(X)$, or $A\in NBO_n(X)$, respectively.

Assume that for each $i\in \{1,\dots,n\}$, $A \in NB_1(A \cup K_i)$, so \(K_i\) is a 1-$Q3$ space. For each $i \in \{1,\dots,n\}$, choose $x_i \in K_i$. For $x \in X-A$, let \(j\) be such that $x \in K_j$. Notice that $B=\{x\}\cup(\{x_1, \ldots x_n\}-\{x_j\})$ witnesses $\bigcup_{i=1}^n K_i$ is a $n$-$Q3$ space, so \(A\in NB_n(X)\).

Assume that for each $i\in \{1,\dots,n\}$, $A \in S_1(A \cup K_i)$, so \(K_i\) is a 1-$Q5$ space. Let $\mathcal U$ be a finite family of non-empty open sets contained in $\bigcup_{i=1}^n K_i$. For each \(i\in\{1,\dots,n\}\), let \(\mathcal U_i=\{K_i\}\cup \{U\cap K_i:U\in \mathcal U\text{ and }U\cap K_i\neq\emptyset\}\). Since \(K_i\) is a $1$-$Q5$ space, there exists $D_i \in C(K_i)$ such that $D_i \cap U \neq \emptyset$ for every $U \in \mathcal U_i$. Hence, $D= \cup_{i=1}^n D_i \in C_n(\bigcup_{i=1}^n K_i)$ satisfies $D \cap U \neq \emptyset$ for each $U \in \mathcal U$. Therefore \(\bigcup_{i=1}^n K_i\) is a $n$-$Q5$ space, so $A \in S_n(X)$.


Assume that for each $i\in \{1,\dots,n\}$, $A \in NSC_1(A \cup K_i)$, so \(K_i\) is a 1-$Q6$ space. Let $\mathcal{U} = \{U_1, \ldots, U_{n+1}\}$ be a family of $n+1$ non-empty open sets contained in $\bigcup_{i=1}^n K_i$. Let $K$ be a component of $X-A$ such that $K \cap U_i \neq \emptyset \neq K \cap U_j$ for some $i \neq j $.
Since $K$ is a $1$-$Q6$ space, there exists $D_K \in C(X)$ such that $D_K \cap U_i \neq \emptyset \neq \emptyset \neq D_K \cap U_j$. Now, for each $m \in\{1,\dots,n+1\}$, choose $x_m \in U_m$.
Hence, $D= D_K \cup \{x_m: m \in\{1,\dots,n+1\}-\{i,j\}\} \in C_n(X)$ satisfies $D \cap U \neq \emptyset$ for each $U \in \mathcal U$. Therefore \(\bigcup_{i=1}^n K_i\) is a $n$-$Q6$ space, so $A \in NSC_n(X)$.
\end{proof}

In Example \ref{dosenos}, if $A=\{(-1,0)\}$, the set $X-A$ has $1$ component. However, although $A$ belongs to $NWC_3(X), NB_3(X), NB^*_2(X), S_2(X)$ and $NSC_2(X)$, it does not belong to $NWC_1(X),$ $NB_1(X),$ $NB^*_1(X),$ $S_1(X),$ or $NSC_1(X)$. This implies that the result in Theorem \ref{equivcomp} is false if we remove the condition that $X-A$ has exactly $n$ components.

\begin{theorem}\label{h1=cn1}
    If $H_1(X)$ is a hyperspace of non-cut sets such that \(H_1(X)=NC_1(X)\), then \(H_n(X)=NC_n(X)\) for each \(n\in\mathbb N\).
\end{theorem}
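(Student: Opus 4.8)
The plan is to establish the two inclusions $H_n(X)\subseteq NC_n(X)$ and $NC_n(X)\subseteq H_n(X)$ separately. The first holds unconditionally: by Theorem \ref{contentions} every hyperspace of non-$n$-cut sets is contained in $NC_n(X)$, so this direction uses nothing about the hypothesis. All the content is in the reverse inclusion, and the strategy there is to reduce $A\in NC_n(X)$ to a family of degree-$1$ statements, one per component of $X-A$, feed the hypothesis $H_1(X)=NC_1(X)$ into each, and reassemble them with Theorem \ref{equivcomp}.

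Concretely, I would take $A\in NC_n(X)$; if $A=X$ it lies in $H_n(X)$ by definition, so assume $A\neq X$ and let $K_1,\dots,K_k$ be the components of $X-A$, where $1\le k\le n$. Since $X-A$ has only finitely many components each $K_i$ is open in $X$, so $X-K_i$ is a genuine element of $2^X$ (it is closed and contains $A$) whose complement $K_i$ is connected; thus $X-K_i\in NC_1(X)$, and the hypothesis gives $X-K_i\in H_1(X)$. The point now is to recognise what this membership says: because $X-(X-K_i)=K_i$, it asserts exactly that the subspace $K_i$ has the single degree-of-connectivity property ($1$-$Qm$, for the $m$ defining $H$) that characterises $H$ at level $1$. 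By transitivity of the subspace topology this intrinsic property of $K_i$ is unchanged whether $K_i$ is viewed inside $X$ or inside $A\cup K_i$, and since $(A\cup K_i)-A=K_i$ it is equivalent to $A\in H_1(A\cup K_i)$. Hence $A\in H_1(A\cup K_i)$ for every component $K_i$ of $X-A$.

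With this in hand, Theorem \ref{equivcomp} applied with $k$ in the role of $n$ (recall $X-A$ has \emph{exactly} $k$ components) yields $A\in H_k(X)$, and since $k\le n$ the monotonicity of Theorem \ref{contentions} gives $H_k(X)\subseteq H_n(X)$, so $A\in H_n(X)$, as desired. I expect the genuinely delicate step to be the bridge between ambient spaces in the previous paragraph: one must verify carefully that ``$X-K_i\in H_1(X)$'' and ``$A\in H_1(A\cup K_i)$'' are literally the same assertion about the subspace $K_i$, so that the hypothesis, which is about the single space $X$, can be transported to each of the smaller spaces $A\cup K_i$ required by Theorem \ref{equivcomp}. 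Once that identification is clear, combining the components and absorbing the case $k<n$ through Theorem \ref{contentions} is routine.
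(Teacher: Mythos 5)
Your proposal is correct and follows essentially the same route as the paper: both reduce $A\in NC_n(X)$ to the components $K$ of $X-A$, observe that $X-K\in NC_1(X)=H_1(X)$ and that this is the same intrinsic statement as $A\in H_1(A\cup K)$ because $X-(X-K)=(A\cup K)-A=K$, and then reassemble via Theorem \ref{equivcomp} together with the monotonicity from Theorem \ref{contentions}. The ``delicate bridge'' you flag is handled in the paper by exactly the same one-line identification.
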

\begin{proof}
    By Theorem \ref{contentions}, $H_n(X) \subset NC_n(X)$. Let $A \in NC_n(X)$. If \(A=X\), then \(A\in H_n(X)\). Suppose that $X-A$ has exactly $m$ components, for some \(m\in \{1,\dots,n\}\). Observe that for each component $K$ of $X-A$, $X-K \in NC_1(X)=H_1(X)$. Since \(X-(X-K)=(A\cup K)-A\), we have $A \in H_1(A \cup K)$. By Theorem \ref{equivcomp}, $A \in H_m(X) \subset H_n(X)$. Therefore $H_n(X)= NC_n(X)$.
\end{proof}


By Theorems \ref{colocalclosed}, \ref{laultimaref} and \ref{h1=cn1}, we obtain the following result.

\begin{coro}\label{ccnncn}
The following conditions are equivalent:
\begin{itemize}
    \item \(CC_1(X)=NC_1(X)\);
    \item \(CC_n(X)=NC_n(X)\) for some \(n\in \mathbb N\);
    \item \(CC_n(X)=NC_n(X)\) for each \(n\in \mathbb N\);
    \item \(\bigcup_{n=1}^\infty CC_n(X)=\bigcup_{n=1}^\infty NC_n(X)\);
    \item $CC_{2^X}=\bigcup_{n=1}^\infty NC_n(X)$.
\end{itemize}
\end{coro}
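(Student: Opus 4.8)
The plan is to verify that conditions (1), (3), (4) and (5) are pairwise equivalent through a short cycle, and then to splice (2) into it. Throughout I would use that $CC_n(X) \subset NC_n(X)$ for every $n$ (Theorem \ref{contentions}), so that each asserted equality reduces to proving the reverse inclusion $NC_n(X) \subset CC_n(X)$.

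I would first establish $(1) \Rightarrow (3)$ by a direct appeal to Theorem \ref{h1=cn1}: taking the hyperspace $H_n(X) = CC_n(X)$, the hypothesis $CC_1(X) = NC_1(X)$ is exactly the premise $H_1(X) = NC_1(X)$, so the conclusion $CC_n(X) = NC_n(X)$ holds for every $n$. The implications $(3) \Rightarrow (2)$ and $(3) \Rightarrow (4)$ are then immediate, the latter because equality at every finite level forces equality of the countable unions. To close the loop on the union side I would apply Theorem \ref{laultimaref} with $H_n(X) = NC_n(X)$: the hypothesis $\bigcup_{n=1}^\infty CC_n(X) = \bigcup_{n=1}^\infty NC_n(X)$ of (4) is precisely its premise, and its conclusion is $CC_1(X) = NC_1(X)$, giving $(4) \Rightarrow (1)$. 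The equivalence $(4) \Leftrightarrow (5)$ follows from Theorem \ref{colocalclosed}, which identifies $CC_{2^X}$ with $\bigcup_{n=1}^\infty CC_n(X)$; substituting this identity into (5) turns it verbatim into (4).

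It remains to feed (2) back to (1), and this is the one step that does not come straight from the three displayed theorems. Given $CC_n(X) = NC_n(X)$ for a single $n$, I would take an arbitrary $A \in NC_1(X)$. If $A = X$, then $A \in CC_1(X)$ by definition; otherwise $X - A$ is connected, while $A \in NC_1(X) \subset NC_n(X) = CC_n(X)$ by the chain of inclusions of Theorem \ref{contentions}. Lemma \ref{lemaCCn} then upgrades this to $A \in CC_1(X)$, yielding $NC_1(X) \subset CC_1(X)$ and hence (1). Assembling the arrows $(1) \Rightarrow (3) \Rightarrow (2) \Rightarrow (1)$ together with $(3) \Rightarrow (4) \Rightarrow (1)$ and $(4) \Leftrightarrow (5)$ then gives the full equivalence.

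The main obstacle, and the only genuinely non-formal point, is the descent $(2) \Rightarrow (1)$: passing from equality of the hyperspaces at some high level $n$ down to level $1$. Everything else is bookkeeping that merely rephrases the cited theorems, but this descent relies essentially on Lemma \ref{lemaCCn}, which lets one convert membership in $CC_n(X)$ with connected complement into membership in $CC_1(X)$ (alternatively Theorem \ref{nmenorquem} could be invoked, at the cost of treating $n=1$ separately). Once that lemma is in hand, the rest of the argument is a routine diagram chase among the named results.
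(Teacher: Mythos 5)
Your proposal is correct and follows essentially the same route as the paper, which simply derives the corollary from Theorems \ref{colocalclosed}, \ref{laultimaref} and \ref{h1=cn1} in exactly the cycle you describe. The one step the paper leaves implicit, the descent from (2) to (1), you handle correctly via Lemma \ref{lemaCCn} (and, as you note, Theorem \ref{nmenorquem} would serve equally well for $n>1$).
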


Recognizing or obtaining new non-cut sets of degree $n$ from those already known is of great interest; Theorems \ref{intaba} and \ref{component} address this.

\begin{theorem}\label{intaba}
    Let $X$ be a continuum and $n \in \mathbb N$. Let $A \in 2^X$ and let $C \in 2^X$ be such that $int(A) \subset C \subset A$.
    \begin{enumerate}
        \item If $A \in NBO_n(X)$, then $C \in NBO_n(X)$;
        \item if $A \in NB_n^*(X)$, then $C \in NB_n^*(X)$;
        \item if $A \in S_n(X)$, then $C \in S_n(X)$;
        \item if $A \in NSC_n(X)$, then $C \in NSC_n(X)$;
        \item if $A \in NC_n(X)$, then $C \in NC_n(X)$; and
    \end{enumerate}
\end{theorem}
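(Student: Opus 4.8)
The unifying observation is the following. Write $S=X-A$ and $Y=X-C$. Since $A$ and $C$ are closed, both $S$ and $Y$ are open in $X$; consequently, for any $D\subset S\subset Y$ one has $int_S(D)=int_X(D)=int_Y(D)$, so the interior of a subcontinuum does not depend on whether we compute it in $S$, in $Y$, or in $X$. Moreover, from $int(A)\subset C\subset A$ we get $S\subset Y$ and $Y=X-C\subset X-int(A)=cl_X(S)$, so $S$ is dense in $Y$. Finally, $F_n(S)\subset F_n(Y)$, $C_n(S)\subset C_n(Y)$, and every continuum contained in $S$ is a continuum contained in $Y$. The plan is to push each witness for the relevant degree of connectivity of $S$ forward to $Y$ using only these three facts. (If $A=X$ then $int(A)=X$ forces $C=X$, and every clause holds by the ``$C=X$'' alternative in Definition \ref{principal}; so assume $A\neq X$, whence $C\neq X$ and $S,Y$ are nonempty.)

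First I would dispose of parts (1)--(4) at once, since they differ only in which witnessing clause is invoked. In cases (1) and (2) let $B\in F_n(S)$ be the witness for $S$; then $B\in F_n(Y)$. Cases (3) and (4) carry no single witness and are phrased directly through families in $C_n$. Now, given a nonempty open set $V$ of $Y$, density of $S$ in $Y$ gives that $U:=V\cap S$ is a nonempty open set of $S$. Applying the hypothesis for $S$ to $U$ (cases (1),(2)), or to the finite family obtained by intersecting the given open sets of $Y$ with $S$ (cases (3),(4)), yields a continuum $D\subset S$, respectively a $D\in C_n(S)$, with the required incidences against $B$ and the $U$'s. Since $D\subset S\subset Y$ and each such $U$ is contained in the corresponding $V$, the same $D$ works in $Y$; in case (1) this last transfer uses $int_S(D)=int_Y(D)$ to convert $B\cap int(D)\neq\emptyset\neq int(D)\cap U$ into the corresponding statement in $Y$. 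Hence $Y$ is $n$-$Qo$, $n$-$Q4$, $n$-$Q5$, or $n$-$Q6$ respectively, which is exactly parts (1)--(4).

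For part (5) I would argue directly with components. Let $S_1,\dots,S_k$ be the components of $S=X-A$, so $k\le n$. Because $S$ is dense in $Y$ and the union is finite,
\[
Y=cl_Y(S)=cl_Y\Big(\bigcup_{i=1}^k S_i\Big)=\bigcup_{i=1}^k cl_Y(S_i),
\]
and each $cl_Y(S_i)$ is connected, being the closure of a connected set. A space that is the union of $k$ connected subsets has at most $k$ components, so $Y=X-C$ has at most $k\le n$ components; that is, $C\in NC_n(X)$.

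The step that looks like the real obstacle is part (1): the definition of $n$-$Qo$ refers to $int(D)$, and a priori the interior of $D$ computed in the larger space $Y$ could be strictly smaller than its interior in $S$, which would break the transfer. The point to get right is therefore the opening observation that $A$ and $C$ being closed makes $S$ and $Y$ open in $X$, so all three interiors coincide; once this is in place, the interior-sensitive case (1) becomes no harder than the interior-free cases (2)--(4). The only genuinely separate piece of reasoning is the elementary components count used for part (5).
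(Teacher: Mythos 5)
Your proposal is correct and follows essentially the same route as the paper: the paper also transfers each witness from $X-A$ to $X-C$ by noting that for any non-empty open $V\subset X-C$ the set $V-A$ (your $V\cap S$) is a non-empty open subset of $X-A$, and it proves part (5) from the same inclusion $X-A\subset X-C\subset cl(X-A)$. Your explicit remarks that the relevant interiors coincide because $X-A$ and $X-C$ are open in $X$, and the component count via $cl_Y(S_i)$, are just slightly more detailed write-ups of steps the paper leaves implicit.
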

\begin{proof}
\begin{enumerate}
    \item Let $A \in NBO_n(X)$ and let $B \in F_n(X-A)$ witnessing that $X-A$ if a $n$-$Qo$ space. Let $V$ be a non-empty open set of \(X-C\). Given that  $int(A) \subset C \subset A$, $V_A=V-A$ is a non-empty open set of $X-A$. Therefore, there exists a continuum $D \subset X-A$ such that $int(D) \cap V_A \neq \emptyset \neq int(D) \cap B$, which implies that $int(D) \cap V \neq \emptyset\neq int(D) \cap B$. Hence, \(B\) witnesses that \(X-C\) is a $n$-$Qo$ space, so $C \in NBO_n(X)$.

    \item Let $A \in NB_n^*(X)$, and let $B \in F_n(X-A)$  witnessing that \(X-A\) is a $n$-$Q4$ space. Given that $int(A) \subset C \subset A$, for every non-empty open set $V$ of $X-C$, $V_A= V-A \neq \emptyset$ is an open set of \(X-A\). Therefore, there exists a continuum $D \subset X-A \subset X-C$ such that $D \cap V_A \neq \emptyset\neq D\cap B$, which implies that $D \cap V \neq \emptyset$. Hence, Hence, \(B\) witnesses that \(X-C\) is a $n$-$Q4$ space, so $C \in NB_n^{*}(X)$.

    \item Let $A \in S_n(X)$ and let $\mathcal U$ be a finite family of non-empty open sets of \(X-C\). Given that $int(A) \subset C \subset A$, the family $\mathcal V=\{U-A: U \in \mathcal U\}$ is a finite family of non-empty open sets of \(X-A\). Therefore, there exists $D \in C_n(X-A)$ such that $D \cap V \neq \emptyset$ for each $V \in \mathcal V$, which implies that $D \cap U \neq \emptyset$ for each $U \in \mathcal U$. Hence, \(X-C\) is a $n$-$Q5$ space, so $C \in S_n(X)$.

    \item Let $A \in NSC_n(X)$ and let $U_1, \ldots, U_{n+1}$ be a collection of $n+1$ non-empty open sets of \(X-C\). For each $i \in\{1,\dots, n+1\}$, let  $V_i=U_i-A$. We have $V_1,\ldots , V_{n+1}$ is a collection of $n+1$ non-empty open sets of \(X-A\). Therefore, there exists $D \in C_n(X-A)$ such that $D \cap V_i \neq\emptyset$ for each $i \in\{1,\dots, n+1\}$. Hence, \(X-C\) is a $n$-$Q6$ space, so $C \in NSC_n(X)$.

    \item Notice that \(X-A\subset X-C\subset X-int(A)=cl(X-A)\). Hence, $X-C$ has at most the same number of components as $X-A$.
\end{enumerate}
\end{proof}

Example \ref{ultimoejemplo} shows that Theorem \ref{intaba} cannot be extended to the sets $CC_n(X)$, $NWC_n(X)$, and $NB_n(X)$. 

\begin{example}\label{ultimoejemplo}
   Let $X$ be the circle of pseudo-arcs and let $f:X \to \mathcal S^1$ be the quotient map from $X$ onto the circle described on \cite{circlepseudoarcs}. Then $f$ is an onto, monotone and open map. Hence, for every $x \in \mathcal S^1$, $f^{-1}(x) \in CC_1(X)$ (see Proposition \ref{mono} and Proposition \ref{monopen}), and no proper subset of $f^{-1}(x)$ is an element of $CC_1(X), NWC_1(X)$ neither $NB_1(X)$. 
\end{example}

The following lemma gives us a characterization of the elements in $NBO_n(X)$.
\begin{prop}\label{lemanbo}
     Let $X$ be a continuum and $A \in 2^X$. Then, $A \in NBO_n(X)$ if and only if for each non-empty finite family $\mathcal U$ of non-empty open sets contained in $X-A$, there exists $D \in C_n(X-A)$ such that $int(D) \cap U \neq \emptyset$ for all $U \in \mathcal U$.
\end{prop}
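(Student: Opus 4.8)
The plan is to prove both implications of Proposition~\ref{lemanbo}, treating the trivial case $A=X$ first: then $X-A=\emptyset$ admits no non-empty open subsets, so the right-hand condition is vacuously true while $A\in NBO_n(X)$ by definition. Assume henceforth $A\neq X$. Throughout, all interiors are taken relative to $X-A$, and I would record the elementary fact that for $D\in C_n(X-A)$ with components $D^1,\dots,D^k$ (where $k\le n$) one has $int(D)=\bigsqcup_{j}int(D^j)$, since finitely many components of a compact set are mutually separated.

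For the implication $A\in NBO_n(X)\Rightarrow$ (the finite-family condition), fix a witness $B\in F_n(X-A)$ of the $n$-$Qo$ property of $X-A$ and a non-empty finite family $\mathcal U=\{U_1,\dots,U_m\}$. For each $U_j$ the $n$-$Qo$ property yields a continuum $D_j\subset X-A$ with $B\cap int(D_j)\neq\emptyset\neq int(D_j)\cap U_j$; choose $b_j\in B\cap int(D_j)$. Grouping the $D_j$ by the point $b_j$ and taking, for each $b\in B$, the union $E_b=\bigcup\{D_j:b_j=b\}$ produces a continuum (the members of a group share the point $b$), and $D=\bigcup_{b\in B}E_b\in C_n(X-A)$ has at most $|B|\le n$ components. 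By monotonicity of the interior, $int(D_j)\subseteq int(D)$, so $int(D)\cap U_j\neq\emptyset$ for every $j$, as required.

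For the converse I would first reformulate the $n$-$Qo$ property via reach regions. For $x\in X-A$ set $R(x)=\bigcup\{int(D):D\in C(X-A),\ x\in int(D)\}$, an open subset of $X-A$; then $X-A$ is $n$-$Qo$ with witness $B$ precisely when $\bigcup_{b\in B}R(b)$ is dense in $X-A$. The key step, and the one I expect to be the heart of the argument, is a partition lemma: the non-empty regions $R(x)$ are pairwise equal or disjoint. This should be proved by the union-of-continua device used in Lemma~\ref{lemaCCn}: if $z\in R(x)\cap R(y)$, pick continua $D,E\subset X-A$ with $x,z\in int(D)$ and $y,z\in int(E)$; since $z\in int(D)\cap int(E)$ forces $D\cap E\neq\emptyset$, the union $D\cup E$ is a continuum with $x,y\in int(D\cup E)$, and absorbing any further continuum through $x$ into $D\cup E$ shows $R(x)\subseteq R(y)$, hence by symmetry $R(x)=R(y)$.

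Granting the partition lemma, the finite-family hypothesis forces the number of non-empty reach blocks to be at most $n$. Fix a countable base $\{V_i\}$ of $X-A$; the union $O=\bigcup_x R(x)$ is dense because every $V_i$ meets some $int(D)$, and each non-empty block, being open, contains some $V_i$. If there were $n+1$ distinct blocks $O_0,\dots,O_n$, choose base sets $V_{i_l}\subseteq O_l$ and apply the hypothesis to $\{V_{i_0},\dots,V_{i_n}\}$ to obtain $D\in C_n(X-A)$ with $int(D)\cap V_{i_l}\neq\emptyset$ for all $l$; but $int(D)=\bigsqcup_j int(D^j)$ with each $int(D^j)$ contained in a single block, so $int(D)$ meets at most $n$ blocks, a contradiction. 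Hence $O$ is the disjoint union of finitely many blocks $O_1,\dots,O_k$ with $k\le n$; choosing for each $l$ a generator $b_l\in X-A$ with $R(b_l)=O_l$ (note $b_l\in R(b_l)$), the set $B=\{b_1,\dots,b_k\}\in F_n(X-A)$ satisfies $\bigcup_l R(b_l)=O$, which is dense, so $B$ witnesses that $X-A$ is $n$-$Qo$, i.e. $A\in NBO_n(X)$. It is worth stressing that a naive attempt to build $B$ as a Hausdorff limit of the sets $D_m$ fails, since such limits preserve neither ``interior meets $U$'' nor membership in $X-A$; it is precisely the partition structure that lets us bypass limits and select the finitely many generators directly inside $X-A$, so establishing that partition lemma cleanly is the main obstacle.
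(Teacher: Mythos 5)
Your proof is correct, and while the forward direction matches the paper's (you merely make explicit the grouping of the continua $D_j$ by the points of $B$ they contain, which the paper leaves implicit when asserting $\bigcup_j D_j\in C_n(X-A)$), your converse takes a genuinely different route. The paper works with the coarser sets $\alpha(D)=\bigcup\{K\in C(X-A):K\cap D\neq\emptyset\}$ and runs a maximality argument: among finite families of subcontinua with non-empty interior and pairwise disjoint $\alpha$'s, the hypothesis bounds the cardinality by $n$, a maximal family has dense $\bigcup_i\alpha(K_i)$, and the witness $B$ picks a point in each $int(K_i)$; verifying the witness then requires assembling a three-piece continuum $E\cup K\cup F$ (a component $E$ of the maximal family's union, a connector $K$ supplied by density of $\alpha(D)$, and an auxiliary continuum $F$ whose interior meets the test open set). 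You instead partition via the finer relation encoded by the reach regions $R(x)$, prove the blocks are pairwise equal or disjoint by the same union-of-continua device as in Lemma~\ref{lemaCCn}, bound the number of blocks by $n$ using the hypothesis on $n+1$ test sets (together with the observation that $int(D)$ decomposes as the disjoint union of the interiors of the at most $n$ components of $D$, each lying in a single block), and then the verification is immediate from your reformulation of the $n$-$Qo$ property as density of $\bigcup_{b\in B}R(b)$. What your approach buys is a cleaner final verification with no gluing step; what the paper's buys is that its maximal family directly produces the continuum $D$ of the claim (each component with non-empty interior), which is sometimes a more usable intermediate object. Your closing remark about why Hausdorff limits of the $D_m$ cannot replace the partition structure is apt.
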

\begin{proof}
    Let $A \in NBO_n(X)$ and let $B \in F_n(X-A)$ witnessing that \(X-A\) is a $n$-$Qo$ space. Let $\mathcal U=\{U_1, \ldots U_m\}$ be a non-empty finite family of non-empty open sets contained in \(X-A\). Then, for each $i \in \{1,\dots,m\}$, there exists $D_i \in C(X-A)$, such that $int(D_i) \cap U_i \neq \emptyset\neq int(D_i)\cap B$. Therefore, $D =\cup_{i=1}^m D_i \in C_n(X-A)$ satisfies  $int(D) \cap U \neq \emptyset$ for each $U \in \mathcal U$.

    Now suppose that $A \in 2^X$ satisfies that for each non-empty finite family $\mathcal U$ of non-empty open sets of \(X-A\), there exists $D \in C_n(X-A)$ such that $int(D) \cap U \neq \emptyset$ for each $U \in \mathcal U$. If \(A=X\), we have \(A\in NBO_n(X)\). 
    
    Assume \(A\neq X\). For \(D\in C_n(X-A)\), define \(\alpha(D)=\bigcup\{K\in C(X-A):K\cap D\neq\emptyset\}\).

    \textbf{Claim}: There exists \(D\in C_n(X-A)\) such that \(\alpha(D)\) is dense in \(X-A\) and each component of \(D\) has non-empty interior.

    \textit{Proof of Claim}:

    Let $\mathcal F=\{\{K_1,K_2,\dots ,K_m\}: m\in\mathbb N, $ for each $i,j\in\{1,\dots,m\}$, $K_i\in C(X-A)$, $int(K_i)\neq\emptyset$, and if $i\neq j, \alpha(K_i)\cap \alpha(K_j)=\emptyset\}$.
    
    By the properties of $A$, if \(\{K_1,\dots,K_m\}\in \mathcal F\), then \(m\leq n\). Let \(M\) be the maximum number of elements of the members of \(\mathcal F\) and let \(\{K_1,\dots,K_M\}\in\mathcal F\).

    Notice that \(\bigcup_{i=1}^M \alpha(K_i)\) is dense in \(X-A\). Let \(D=\bigcup_{i=1}^M K_i\), notice that \(D\in C_n(X-A)\) and \(\alpha(D)\) is dense in \(X-A\). The claim is proved.

    Now, take $D$ as in the claim and let \(B\in F_n(X)\) containing exactly one point in the interior of each component of \(D\). If $U$ is any non-empty open set of $X-A$, then there exist a continuum $F$ such that $int(F)\cap U\neq \emptyset$ and a continuum $K \subset X-A$ such that $K \cap D \neq \emptyset \neq K \cap (U \cap int(F))$. 
    
    Let \(E\) be a component of \(D\) that intersects \(K\). Therefore,  $E\cup K \cup F \in C(X-A)$, \(B\cap int (E\cup K\cup F)\neq\emptyset\) and $int(E\cup K \cup F) \cap U \neq \emptyset$. Thus $B$ witnesses that $X-A$ is a $n$-$Qo$ space, so $A\in NBO_n(X)$.
\end{proof}

The following lemma gives us a characterization of the elements in $CC_n(X)$.

\begin{lemma}\label{lemacc}
    Let $X$ be a continuum, $A \in 2^X$ and \(n\in\mathbb N\). Then, $A \in CC_n(X)-\{X\}$ if and only if for each open set $U$ with $A \subset U$, there exists an open set $V$ such that $A \subset V \subset U$ and $X-V \in C_n(X)$. 
\end{lemma}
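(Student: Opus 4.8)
The plan is to prove both implications by passing back and forth between witnessing subcontinua and closed sets with small complement.

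\textbf{Forward direction.} Suppose $A\in CC_n(X)-\{X\}$, and let $B=\{b_1,\dots,b_k\}\in F_n(X-A)$ with $k\le n$ witness that $X-A$ is $n$-$Q1$. Given an open $U\supseteq A$, I would cover the compact set $X-U$ by the interiors of the continua $D_x\subseteq X-A$ (for $x\in X-U$) provided by the definition, each of which meets $B$; after extracting a finite subcover $int(D_{x_1}),\dots,int(D_{x_p})$ I set $E=\bigcup_{j=1}^p D_{x_j}$, so that $X-U\subseteq int(E)\subseteq E\subseteq X-A$ and $E$ is compact. Then $V:=X-E$ is open with $A\subseteq V\subseteq U$ and $X-V=E$. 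The only point to verify is $E\in C_n(X)$: since each $D_{x_j}$ is connected and meets $B$, every component of $E$ is a union of some of the $D_{x_j}$ and therefore meets $B$; distinct components meet disjoint subsets of $B$, so $E$ has at most $|B|\le n$ components. (When $X-U=\emptyset$ one simply takes $E=D_{x_0}$ for an arbitrary $x_0\in X-A$ to keep $E$ nonempty.)

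\textbf{Reverse direction.} Assume the neighborhood condition; note first that $A\neq X$, since for $A=X$ the only admissible $U$ is $X$, forcing $X-V=\emptyset\notin C_n(X)$. The core of the argument is the local statement that \emph{every $x\in X-A$ lies in $int(D)$ for some continuum $D\subseteq X-A$}. To prove it I pick $\epsilon$ with $0<\epsilon<d(x,A)$ and apply the hypothesis to $U=\{y:d(y,A)<\epsilon\}$, obtaining $E:=X-V\in C_n(X)$ with $X-U\subseteq E\subseteq X-A$. Then $x\in X-cl(U)\subseteq int(E)$, and because $E$ has at most $n$ components each component is clopen in $E$; the component $D$ of $E$ through $x$ is thus a subcontinuum of $X-A$ with $x\in int(D)$ (as $X$ has no isolated points, $D$ is nondegenerate). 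Running the same construction on $n+1$ points chosen in supposedly distinct components shows that $X-A$ has at most $n$ components $K_1,\dots,K_r$: a connected subset of $E\subseteq X-A$ cannot meet two components of $X-A$, so $r+1$ such points would force $E$ to have more than $n$ components. Having finitely many components, each $K_i$ is clopen in $X-A$, hence open in $X$.

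\textbf{Assembling the witness.} Finally I upgrade the local statement to $n$-$Q1$ component by component, reusing the partition argument of Lemma \ref{lemaCCn}. Fix $K_i$ and, for $b\in K_i$, put $X_b=\{y\in K_i:\text{there is a continuum }D\subseteq K_i\text{ with }b\in D\text{ and }y\in int(D)\}$. These sets are open, cover $K_i$ by the local statement, and any two that meet must coincide, so they partition $K_i$; connectedness of $K_i$ then gives $K_i=X_{b_i}$ for a single $b_i$, i.e.\ $\{b_i\}$ witnesses that $K_i$ is $1$-$Q1$. Setting $B=\{b_1,\dots,b_r\}\in F_n(X-A)$, any $x\in X-A$ lies in some $K_i$ and its witnessing continuum stays inside $K_i\subseteq X-A$ and meets $B$ at $b_i$, so $B$ witnesses that $X-A$ is $n$-$Q1$ and $A\in CC_r(X)\subseteq CC_n(X)$. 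I expect the main obstacle to lie in this reverse direction, specifically in the local statement: one must choose $U$ precisely enough that $x$ lands in $int(E)$ rather than merely in $E$, and then exploit the finiteness of the components of $E$ to promote the component through $x$ to a subcontinuum containing $x$ in its interior. Bounding the number of components of $X-A$ by $n$ is the companion delicate point, as it is exactly what keeps the cardinality of the assembled witness $B$ at most $n$.
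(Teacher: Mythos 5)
Your forward direction is essentially the paper's argument verbatim: cover the compact set $X-U$ by the interiors of the witnessing continua $D_x\subset X-A$, extract a finite subcover, and let $V$ be the complement of the union; your count of the components of $E$ via the points of $B$ is exactly how one sees $E\in C_n(X)$, and your remark about the degenerate case $X-U=\emptyset$ is a small but genuine improvement (the paper's finite subcover would be empty there). The reverse direction is where you take a genuinely different route. The paper fixes a witness $B$ in advance, one point per component of $X-A$, applies the hypothesis to $U=X-(V_y\cup B)$ so that $B$ is forced into $X-V$, and then asserts that the component $D$ of $X-V$ through $y$ meets $B$. That last assertion is delicate for $n\ge 2$: the hypothesis only provides \emph{some} $V$, and the resulting $X-V$ may have more components than $X-A$ does, so the component through $y$ need not contain the point of $B$ lying in the same component of $X-A$. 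Your argument sidesteps this entirely: you first isolate the purely local statement (every $x\in X-A$ lies in the interior of a subcontinuum of $X-A$, namely the clopen component through $x$ of a suitable $X-V$), separately bound the number of components of $X-A$ by $n$ using $n+1$ test points, and only then assemble the witness $B$ by rerunning the partition argument of Lemma \ref{lemaCCn} inside each (open) component $K_i$. Both proofs rest on the same mechanism --- a compact set with at most $n$ components has clopen components, so $x\in int(E)$ promotes to $x\in int(D)$ for the component $D$ through $x$ --- but they differ in how the global witness is produced, and your longer bookkeeping is the more watertight of the two. Your proof is correct.
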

\begin{proof}
    Let $A \in CC_n(X)-\{X\}$ and let $U$ be an open set such that $A \subset U$. Let $B \in F_n(X-A)$ witnessing that \(X-A\) is a $n$-$Q1$ space. For each $y \in X-U$, let $D_y$ be a continuum such that $y \in int(D_y)$, $D_y \cap B \neq \emptyset$ and $D_y \subset X-A$. Since $X-U$ is compact and $\{int(D_y): y \in X-U\}$ is an open cover of $X-U$, there exists a finite subcover $\{int(D_1), \ldots, int(D_k)\}$ of $X-U$. Let $V=(X-\cup_{i=1}^kD_i) \subset U$, observe that \(V\) is an open set, \(A\subset V\subset U\) and $X-V=\cup_{i=1}^kD_i \in C_n(X)$.
    
    Now suppose that $A \in 2^X$ is such that for each open set $U$ such that $A \subset U$, there exists an open set $V$ such that $A \subset V \subset U$ and $X-V \in C_n(X)$. Observe that $X-A\neq\emptyset$ must have at most $n$ components. Choose $B \in F_n(X)$ such that \(B\) intersects each component of \(X-A\). Let $y \in X-A$, let \(V_y\subset X-A\) be a closed neighborhood of \(y\) and let \(U= X-(V_y\cup B)\). Since $A \subset U $, there exists $V$ open such that \(A\subset V\subset U\) and $X-V \in C_n(X)$, which implies that the component $D$ of $X-V \subset X-A$ containing $y$ is a continuum such that $B\cap D \neq \emptyset$ and \(y\in int(D)\).
\end{proof}

\begin{theorem}\label{component}
    Let $X$ be a continuum and $n \in \mathbb N$. Let $A, C \in 2^X$ such that $C$ is union of some components of \(A\).
    \begin{enumerate}
        \item If $A \in CC_n(X)$, then $C \in CC_n(X)$;
        \item if $A \in NWC_n(X)$, then $C \in NWC_n(X)$; and
        \item if $A \in NC_n(X)$, then $C \in NC_n(X)$.
    \end{enumerate}
\end{theorem}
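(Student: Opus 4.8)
The plan is to reduce all three parts to a single geometric construction. First dispose of the degenerate case: if \(A=X\) then, as \(X\) is connected, its only component is \(X\), so \(C=X\) too, and every clause holds by the ``\(C=X\)'' alternative in Definition~\ref{principal}. So I assume \(A\neq X\), whence \(C\subsetneq X\) and \(\emptyset\neq X-A\subseteq X-C\). Since \(C\) is a closed union of components of \(A\), each component of \(A\) lies entirely in \(C\) or entirely in \(A-C\). The tool I would prove first is a \emph{bridge lemma}: if \(P\) is a component of \(A\) with \(P\cap C=\emptyset\), then there is a subcontinuum \(T\subseteq X-C\) with \(P\subseteq T\) and \(T\cap(X-A)\neq\emptyset\). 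To build it, use that components equal quasicomponents in the compact set \(A\) to pick a clopen-in-\(A\) set \(W\) with \(P\subseteq W\) and \(W\cap C=\emptyset\); then choose open \(U\supseteq W\) with \(\mathrm{cl}(U)\cap((A-W)\cup C)=\emptyset\) and \(\mathrm{cl}(U)\neq X\). The component \(T\) of \(\mathrm{cl}(U)\) containing \(P\) lies in \(X-C\), and by the boundary bumping theorem it meets \(\mathrm{bd}(U)\), which is disjoint from \(A\); hence \(T\cap(X-A)\neq\emptyset\). This uses only that \(X\) is a continuum, with no hypothesis on the connectivity degree of \(A\).

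For part 3 I would show every component \(L\) of \(X-C\) meets \(X-A\). If not, \(L\subseteq A-C\), so \(L\) lies in a single component \(P\) of \(A\) with \(P\cap C=\emptyset\), and maximality forces \(L=P\); the bridge \(T\supseteq P=L\) is connected, lies in \(X-C\), and meets \(L\), so \(T\subseteq L\), contradicting \(T\cap(X-A)\neq\emptyset\). Hence each component of \(X-C\) contains a whole component of \(X-A\), distinct ones disjointly, so \(X-C\) has at most as many components as \(X-A\), i.e. \(C\in NC_n(X)\). For part 2 I would reuse the very witness \(B\in F_n(X-A)\) showing \(X-A\) is \(n\)-\(Q2\) (note \(B\subseteq X-A\subseteq X-C\)). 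For \(x\in X-A\) the original witnessing continuum works unchanged; for \(x\in A-C\), let \(P\) be its component of \(A\), take the bridge \(T\ni x\) reaching some \(w\in T\cap(X-A)\), and append the witnessing continuum \(D_w\subseteq X-A\) joining \(w\) to \(B\). Then \(T\cup D_w\subseteq X-C\) is a continuum through \(x\) meeting \(B\), so \(X-C\) is \(n\)-\(Q2\) and \(C\in NWC_n(X)\).

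For part 1 I would instead use the neighborhood characterization of colocal connectedness (Lemma~\ref{lemacc}): it suffices to produce, for each open \(U\supseteq C\), an open \(V\) with \(C\subseteq V\subseteq U\) and \(X-V\in C_n(X)\). Separate \(C\) inside \(A\) by a clopen-in-\(A\) set \(W\) with \(C\subseteq W\subseteq U\cap A\), fix disjoint open \(G\supseteq W\) and \(H\supseteq A-W\) with \(\mathrm{cl}(G)\subseteq U\) and \(\mathrm{cl}(G)\cap\mathrm{cl}(H)=\emptyset\), and apply Lemma~\ref{lemacc} to \(A\in CC_n(X)\) with the neighborhood \(G\cup H\) to obtain \(V'\) with \(A\subseteq V'\subseteq G\cup H\) and \(X-V'\in C_n(X)\). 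Set \(V=V'\cap G\); then \(C\subseteq V\subseteq U\) and \(X-V=(X-V')\cup(V'\cap H)\).

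The main obstacle is exactly this last step: showing \(X-V\in C_n(X)\), i.e. that trimming the removed set back near \(A-C\) creates no new component. I would rule out any component \(N\) of \(X-V\) contained in \(V'\cap H\). Such an \(N\) is a component of the compact set \(Z=\mathrm{cl}(V'\cap H)\) (whose frontier lies in \(X-V'\) because \(\mathrm{cl}(G)\cap\mathrm{cl}(H)=\emptyset\)), and \(N\) is disjoint from the closed set \(Z\cap(X-V')\); separating it by a clopen-in-\(Z\) set \(\Omega\subseteq V'\cap H\), and using that \(V'\cap H\) is open in \(X\), shows \(\Omega\) is clopen in \(X\), hence \(\Omega=X\) by connectedness, which is absurd since \(\Omega\subseteq H\subsetneq X\). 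Therefore every component of \(X-V\) meets \(X-V'\) and contains one of its at most \(n\) components, so \(X-V\in C_n(X)\) and Lemma~\ref{lemacc} yields \(C\in CC_n(X)\). This clopen-in-\(X\) plus connectedness device is the delicate point where the possible non-local-connectedness of \(X\) must be neutralized, and it is the same principle underlying the bridge lemma used in parts 2 and 3.
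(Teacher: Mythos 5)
Your proposal is correct. For parts 2 and 3 it is essentially the paper's argument in different packaging: where you isolate a ``bridge lemma'' (boundary bumping applied to a neighborhood of a clopen-in-$A$ piece containing the component $P$, giving a continuum $T\supseteq P$ in $X-C$ that escapes $A$), the paper instead grows the component $D$ of $A$ containing $x$ to a strictly larger continuum $G$ inside a separating neighborhood (Nadler's Corollary 5.5), picks $r\in G-A$, and glues on the witnessing continuum through $r$; the two devices are interchangeable, and your version has the advantage of being stated once and reused verbatim in both parts. Part 1 is where you genuinely diverge. The paper verifies the pointwise definition of $n$-$Q1$ for $X-C$ directly: for $x\in A-C$ it separates $C$ from the component of $x$, invokes Lemma~\ref{lemacc} (forward direction only, applied to $A$) to arrange $X-(U\cup V)\in C_n(X)$, observes that $X-U\in C_n(X)$, and uses the component of $X-U$ containing $x$ as the continuum neighborhood to be glued to a $Q1$-witness continuum --- so it still has to exhibit the witness set $B$ for $C$. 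You instead remain entirely inside the neighborhood characterization, using Lemma~\ref{lemacc} in both directions, and the work shifts to showing that $X-V=(X-V')\cup(V'\cap H)$ acquires no new components; your clopen-in-$X$ contradiction (a component of $X-V$ trapped in the open set $V'\cap H$ would yield a proper nonempty clopen subset of $X$) is sound and plays the same role as the paper's boundary-bumping count of the components of $X-U$. What your route buys is that you never construct the finite witness set for $C$ at all; what the paper's buys is uniformity of presentation across the three parts. All the separation steps you need (a clopen-in-$A$ set $W$ with $C\subseteq W\subseteq U\cap A$, and the disjoint closures of $G$ and $H$) do go through by the quasicomponent argument and compactness, so I see no gap.
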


\begin{proof}
    If \(A=X\), the result is trivial. Assume \(A\neq X\).
    \begin{enumerate}
    \item Let \(A\in CC_n(X)\) and $B \in F_n(X-A)$ witnessing that $X-A$ is a $n$-$Q1$ space.  Let $x \in X-C$. If $x \in X-A$, there exists a continuum $G$ containing $x$ in its interior such that $G \cap B \neq \emptyset$ and $B \subset X-A$. Now, assume $x \in A-C$, and let $D$ be the component of \(x\) in $A-C$. Since \(A\) is compact, there exist two open sets $U$ and $V$ such that $C \subset U$, $D \subset V$, $U \cap V = \emptyset$, $A \subset U \cup V$; moreover, by Lemma \ref{lemacc}, we may assume that $E=X-(U \cup V)$ has at most $n$ components. Therefore, $E\cup V=X-U\in C_n(X)$ (Theorem 5.6 from \cite{nadler}) and contains $x$ in its interior. Since the component of $E \cup V$ containing $x$ has a non-empty interior in $X-A$, there exists a continuum $G$ containing $x$ in its interior such that $G \cap B \neq \emptyset$ and $B \subset X-C$. Hence, $C \in CC_n(X)$.

    \item Let \(A\in NWC_n(X)\) and $B \in F_n(X-A)$ witnessing that $X-A$ is an $n$-$Q2$ space. Let $x \in X-C$. If $x \in X-A$, there exists a continuum $F$  containing $x$ such that $F\subset X-A$, and $F \cap B \neq \emptyset$. Now, assume $x \in A-C$. Let $D$ be the component of $A$ containing $x$. Since \(A\) is compact, there exist two open sets $U$ and $V$ such that $C \subset U$, $D \subset V$, $U \cap V = \emptyset$, $A \subset U \cup V$. Thus, there exists a continuum $G$ such that $D \subsetneq G \subset U$ (Corollary 5.5 of \cite{nadler}). Choose $r \in G-A$. Since $r\in X-A$, there exists a continuum $F$ containing $r$ such that $F \subset X-A$ and $F \cap B \neq \emptyset$. Therefore, $F \cup G$ is a continuum containing $x$ such that $F\subset X-C$, and $(F \cup G) \cap B \neq \emptyset$. In conclusion, $B$ witnesses that $X-C$ is an $n$-$Q2$ space, thus $C \in NWC_n(X)$.

    \item Let \(A\in NC_n(X)\). Let $x \in X-C$. If $x \in X-A$, the component of \(x\) in \(X-A\) is contained in the component of \(x\) in \(X-C\). Now, assume $x \in A-C$. Let $D$ be the component of $A$ containing $x$. Since \(A\) is compact, there exist two open sets $U$ and $V$ such that $C \subset U$, $D \subset V$, $U \cap V = \emptyset$, $A \subset U \cup V$. Thus, there exists a continuum $G$ such that $D \subsetneq G \subset U$ (Corollary 5.5 of \cite{nadler}). Choose $r \in G-A$. Since $r\in X-A$, the component of \(r\) in \(X-C\) contains \(G\) and contains the component of \(r\) in \(X-A\), and the component of \(x\) in \(X-C\) is the same as the component of \(r\) in \(X-C\). Thus, each component of \(X-C\) contains some component of \(X-A\), so \(C\in NC_n(X)\).
    \end{enumerate}
\end{proof}

The following examples show that we can not generalize the previous theorem to the hyperspaces ${NB}_n(X)$, $ {NBO}_n(X)$, $ {NB}_n^*(X)$, $ {S}_n(X)$ and $ {NSC}_n(X)$.

\begin{example}\label{compactificcioonNBO}
   Let $Y$ be the dyadic solenoid, let $S \subset Y$ be an arc and let $h:I\times \{0\} \to S$ be an homeomorphism. Define $X=Y \cup_h (I \times I)$, $A=\{p\} \cup (I \times I)$ where $p$ is a point of \(X\) not in the composant of $I \times I$, and let \(C=\{p\}\). Observe that $A\in NB_1(X)$ and $C \notin NB_1(X)$. 
\end{example}

\begin{example}\label{igualpenultimoejemplo}
   Let $Y$ be a compactification of the ray with remainder $\mathcal S^1$, let $S \subset \mathcal S^1$ be an arc and $h:I\times \{0\} \to S$ an homeomorphism. Define $X=Y \cup_h (I \times I)$, $A=\{p\} \cup (I \times I)$ where $p \in \mathcal S^1-S$, and \(C=\{p\}\). Observe that $A\in NBO_1(X)$, \(C\) is a component of \(A\) and $C\notin NSC_1(X)$. 
\end{example}

In Proposition 2.4 of \cite{noncutshore}, the authors studied what is the Borel type class of the sets $CC_1(X) \cap F_1(X)$, $S_1(X) \cap F_1(X)$, $NSC_1(X) \cap F_1(X)$, and $NC_1(X) \cap F_1(X)$. We extend the analysis for some hyperspaces of non-cut sets of degree $n$.

\begin{prop}
Let $X$ be a continuum and $n \in \mathbb N$. The following is true.
\begin{itemize}
\item [(i)] $CC_n(X)$ is of type $G_\delta$,
\item [(ii)] $NBO_n(X)$ is of type $G_\delta$,
\item [(iii)] $S_n(X)$ is of type $G_\delta$, and
\item [(iv)] $NSC_n(X)$ is of type $G_\delta$.
\end{itemize}
\end{prop}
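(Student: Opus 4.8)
The plan is to realize each of the four hyperspaces as a countable intersection of $G_\delta$ subsets of the compact metric space $2^X$ (with the Hausdorff metric), using two soft facts: in a metric space every closed set is $G_\delta$, and a finite union of $G_\delta$ sets is again $G_\delta$. Fix once and for all a countable base $\{W_1,W_2,\dots\}$ of non-empty open subsets of $X$. I would first isolate the atomic building blocks. For a finite tuple $\sigma=(i_1,\dots,i_m)$ put $G_\sigma=\{A\in 2^X:\text{there is }D\in C_n(X)\text{ with }D\cap A=\emptyset\text{ and }D\cap W_{i_j}\neq\emptyset\text{ for all }j\}$. This set is \emph{open}: if $D$ witnesses $A$, then $A\in\langle X-D\rangle$ (an open neighborhood, since $X-D$ is open) and the \emph{same} $D$ witnesses every $A'\in\langle X-D\rangle$. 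Secondly, for each open $W$ the set $\{A:W\subset A\}=\{A:cl(W)\subset A\}$ is \emph{closed} (a Hausdorff limit of closed sets each containing the fixed compact set $cl(W)$ still contains it). The same two facts hold verbatim with $D$ replaced by $int(D)$, yielding an open set $G_\sigma^{\circ}$.

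For $S_n(X)$, $NSC_n(X)$ and $NBO_n(X)$ I would show that membership is detected by tuples drawn from the base, via the escape hatch $E_\sigma=\bigcup_j\{A:W_{i_j}\subset A\}$ (closed), which records that some $W_{i_j}$ fails to meet $X-A$. Concretely I claim $S_n(X)=\bigcap_\sigma(E_\sigma\cup G_\sigma)$, where $\sigma$ runs over all finite tuples of length $\ge 1$. The inclusion $\subseteq$ is immediate from the definition of $n$-$Q5$ (and $X\in E_\sigma$ for every $\sigma$). For $\supseteq$, given $A\neq X$ and a finite family $\mathcal U$ of non-empty open subsets of $X-A$, I would shrink each $U_l\in\mathcal U$ to a basic set $W_{i_l}$ with $\emptyset\neq W_{i_l}\cap(X-A)\subset U_l$; then $A\notin E_\sigma$ forces $A\in G_\sigma$, and the resulting $D\in C_n(X)$ lies in $X-A$ and meets each $U_l$. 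Since $E_\sigma\cup G_\sigma$ is $(\text{closed})\cup(\text{open})$, hence $G_\delta$, the intersection is $G_\delta$. The set $NSC_n(X)$ is handled identically, restricting $\sigma$ to length exactly $n+1$ (matching $n$-$Q6$), and $NBO_n(X)$ is handled identically using the characterization of Proposition \ref{lemanbo} together with $G_\sigma^{\circ}$ in place of $G_\sigma$ (so that $int(D)$, which lies in $X-A$, meets each $W_{i_j}$).

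For $CC_n(X)$ I would instead invoke Lemma \ref{lemacc}. Writing $U_S=\bigcup_{i\in S}W_i$ for finite $S\subset\mathbb N$, compactness of $A$ shows that every open $U\supset A$ contains some $U_S$ with $A\subset U_S$; hence Lemma \ref{lemacc} gives, for $A\neq X$, that $A\in CC_n(X)$ if and only if for every finite $S$ with $A\subset U_S$ there is $F\in C_n(X)$ with $F\cap A=\emptyset$ and $X-U_S\subset F$. The decisive point is that the condition $X-U_S\subset F$ does not involve $A$, so $R(S)=\{A:\text{there is }F\in C_n(X)\text{ with }F\cap A=\emptyset\text{ and }X-U_S\subset F\}$ is open (a witness $F$ works on all of $\langle X-F\rangle$), while $\{A:A\not\subset U_S\}$ is closed. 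Consequently $CC_n(X)=\{X\}\cup\bigcap_S\bigl(\{A:A\not\subset U_S\}\cup R(S)\bigr)$ is $G_\delta$; the point $X$ must be adjoined because the escape hatch $A\not\subset U_S$ can fail at $A=X$ (when $U_S=X$), and $\{X\}$ is closed.

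The main obstacle is not the topology of $2^X$ but arranging each reduction so that the only $A$-dependent part of the witnessing condition is the disjointness $D\cap A=\emptyset$ (respectively $A\subset X-F$), which is exactly the open condition $A\in\langle X-D\rangle$; everything else about the witness must be made independent of $A$. This is automatic for the three hitting properties once one quantifies over the fixed countable base, and for $CC_n(X)$ it is precisely why I phrase the neighborhood condition through the $A$-independent inclusion $X-U_S\subset F$ rather than through $\epsilon$-neighborhoods of $A$, whose radius would move with $A$ and destroy openness. The remaining work is bookkeeping: checking that the base-reduction is an exact equivalence and that $A=X$ is treated correctly, which is harmless since $\{X\}$ is closed and lies in each of the four hyperspaces.
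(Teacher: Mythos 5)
Your proof is correct. For parts (ii)--(iv) it follows essentially the paper's route: both arguments intersect over finite subfamilies of a fixed countable base, both use witness sets of the form $\langle X-K\rangle$ (open because the same $K\in C_n(X)$ serves every $A'\subset X-K$), and both rely on Proposition \ref{lemanbo} to handle $NBO_n(X)$. The only divergence there is the escape hatch: the paper excuses a family $\mathcal U$ via the open set $\bigcup_{U\in\mathcal U}\langle X,U\rangle$ (some $U$ meets $A$, so $\mathcal U$ is not a family of open subsets of $X-A$), which makes each $S_{\mathcal U}$ outright open, whereas you excuse via the closed set $\{A: W\subset A\}$ and then invoke the facts that closed subsets of a metric space are $G_\delta$ and that finite unions of $G_\delta$ sets are $G_\delta$; both reductions to the base are exact, the paper's being marginally more economical. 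For part (i) your route is genuinely different: the paper writes $CC_n(X)=\bigcap_{k} CC_n^k$, where $CC_n^k$ is a union of basic open sets $\langle U_1,\dots,U_m\rangle$ of mesh less than $\frac1k$ whose complements admit a uniform colocal-connectedness witness $B\in F_n(X)$ --- a decomposition that encodes Definition \ref{principal2}.\ref{colocalconnect2} directly but whose verification (in particular, extracting a single witness $B$ from the witnesses at each stage $k$) is left implicit --- while you funnel $CC_n(X)$ through Lemma \ref{lemacc}, reducing membership to the $A$-independent condition that some $F\in C_n(X)$ satisfies $F\cap A=\emptyset$ and $X-U_S\subset F$, quantified over finite unions $U_S$ of base elements. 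This buys you uniformity (part (i) falls into the same template as the other three), reuses a lemma the paper has already established, and your explicit adjunction of the closed set $\{X\}$ --- needed because $X\notin R(S)$ whenever $U_S=X$ --- is a detail the paper's formulation absorbs differently but which you are right to record.
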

\begin{proof}
\begin{itemize}
    \item[(i)] For each \(k\in\mathbb N\), define $CC_n^k$ as the union of all the sets $\left\langle U_1, \ldots , U_m \right\rangle\subset 2^X$ with $m \in \mathbb N$, $U_i$ non-empty and open in \(X\), $diam(U_i)<\frac{1}{k}$ for each \(i\), such that there exists $B \in F_n(X)$ satisfying that for every $z \in X-\bigcup_{i=1}^m U_i$, there exists a continuum $D \subset X-\bigcup_{i=1}^m U_i$ such that $z \in {int}(D)$ and $B \cap D \neq \emptyset$. It holds that $CC_n(X) = \bigcap_{k=1}^\infty CC_n^k$.
    

    \item[(ii)] Let $\mathcal B$ be a countable base with $\emptyset\notin\mathcal B$. For each $\mathcal U \subset \mathcal B$ finite, define the set $NBO_{\mathcal U}$ as follows:
    $$NBO_{\mathcal U} = \bigcup \{\langle X-K \rangle : K \in C_n(X), \forall U \in \mathcal U, int(K) \cap U \neq \emptyset  \} \cup (\bigcup_{U \in \mathcal U}\langle X, U \rangle).$$ Let $\mathcal C=\{\mathcal U : \mathcal U \text{ is a non-empty finite subset of } \mathcal B\}$.  It holds that $NBO_n(X) = \bigcap_{\mathcal U \in \mathcal C} NBO_{\mathcal U}$.
    

    \item[(iii)] Let \(\mathcal B\) and \(\mathcal C\) as in (ii). For each $\mathcal U \subset \mathcal B$ finite, define the set $S_{\mathcal U}$ as follows:
    $$S_{\mathcal U} = \bigcup \{\langle X-K \rangle : K \in C_n(X), \forall U \in \mathcal U, K \cap U \neq \emptyset  \} \cup (\bigcup_{U \in \mathcal U}\langle X, U \rangle).$$
    
    It holds that
    $S_n(X) = \bigcap_{\mathcal U \in \mathcal C} S_{\mathcal U}$.

    \item[(iv)] Let \(\mathcal B\) and \(S_\mathcal U\) as in (ii). 
    It holds that $NSC_n(X)=\bigcap \{ S_{\mathcal U}: \mathcal U \subset \mathcal B $ non-empty with at most $n+1 \text{ elements} \}$.
\end{itemize}
\end{proof}

In Example 2.5 from \cite{noncutshore}, the authors show a continuum $X$ where $NWC_1(X) \cap F_1(X)$ is not Borel, consequently, $NWC_1(X)$ itself is not Borel. In (ii), they show that the sets $CC_1(X) \cap F_1(X)$, $S_1(X) \cap F_1(X)$, $NSC_1(X) \cap F_1(X)$, and $NC_1(X) \cap F_1(X)$ do not necesarilly fall into the category of $F_\sigma$, implying the same for $CC_1(X)$, $S_1(X)$, $NSC_1(X)$, and $NC_1(X)$. Finally, in (iii), the authors furnish an example where $NC_1(X) \cap F_1(X)$ lacks the $G_\delta$ property, hence $NC_1(X)$ is not $G_\delta$.


\begin{theorem}
    If $X$ is a compact metric space, then $M(X)$ is a $G_\delta$ set in $2^X$.
\end{theorem}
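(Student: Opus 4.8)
The plan is to realize $M(X)$ as a countable intersection of open subsets of $2^X$. Write $d$ for the metric on $X$ and $d_H$ for the induced Hausdorff metric on $2^X$. Since $X$ is compact metric it is second countable, so I would fix a countable base $\{B_i\}_{i\in\mathbb N}$ consisting of non-empty open subsets of $X$. The crucial elementary observation is that an element $A\in 2^X$ (a non-empty closed set) has non-empty interior if and only if $B_i\subset A$ for some $i$: any non-empty open set contained in $A$ must contain some member of the base, and conversely each $B_i$ is open and non-empty. Hence, setting $\mathcal O_i=\{A\in 2^X: B_i\not\subset A\}$, we obtain
$$M(X)=\bigcap_{i\in\mathbb N}\mathcal O_i,$$
so it suffices to prove that each $\mathcal O_i$ is open in $2^X$ (equivalently, that its complement $\mathcal C_i=\{A\in 2^X:B_i\subset A\}$ is closed).

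The key step, and essentially the only point requiring an argument, is this openness, which I would verify directly with $d_H$. Suppose $A\in\mathcal O_i$, so there is a point $x\in B_i\setminus A$; since $A$ is closed, $\delta=d(x,A)>0$. Then for every $A'\in 2^X$ with $d_H(A',A)<\delta$ one cannot have $x\in A'$, because $x\in A'$ would give $d(x,A)\le\sup_{a\in A'}d(a,A)\le d_H(A',A)<\delta$, contradicting $d(x,A)=\delta$. Thus every such $A'$ still omits the point $x\in B_i$, so $B_i\not\subset A'$ and $A'\in\mathcal O_i$; this shows the ball of radius $\delta$ about $A$ lies in $\mathcal O_i$, so $\mathcal O_i$ is open. (Dually, if one prefers to check that $\mathcal C_i$ is closed: whenever $A_k\in\mathcal C_i$ and $A_k\to A$, then each fixed $x\in B_i$ satisfies $x\in A_k$ for all $k$, whence $d(x,A)\le d_H(A_k,A)\to 0$ and $x\in A$, giving $B_i\subset A$.)

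Combining these, $M(X)=\bigcap_{i}\mathcal O_i$ is a countable intersection of open sets, hence of type $G_\delta$, which is the assertion. I expect the only real obstacle to be the metric estimate establishing openness of $\mathcal O_i$; the remainder is bookkeeping with the base. The sole subtlety to keep in mind is the harmless degenerate case in which $X$ has isolated points, so that some basic sets $B_i$ are singletons and $M(X)$ may be small or even empty; the argument above is insensitive to this, since the empty set and all of $2^X$ are themselves $G_\delta$.
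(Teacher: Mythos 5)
Your proposal is correct and follows essentially the same route as the paper: both fix a countable base of non-empty open sets and express the complement of $M(X)$ as the countable union of the closed sets $\{A\in 2^X: B_i\subset A\}$ (equivalently, $M(X)$ as the intersection of their open complements). The only difference is that you spell out the Hausdorff-metric estimate showing these sets are closed, which the paper simply declares to be clear.
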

\begin{proof}
    Let $\{U_n: n \in \mathbb N\}$ be a countable base of $X$. For each $n \in \mathbb N$, we define $$\mathcal U_n=\{A \in 2^X: U_n \subset A\}.$$ It is clear that each $\mathcal U_n $ is closed and $2^X - M(X) = \cup_{n} \mathcal U_n$ which is an $F_\sigma$ set. Hence, $M(X)$ is a $G_\delta$ set. 
\end{proof}

As a consequence of the following corollary, we can specify the Borel type class of some sets mentioned in Note \ref{definicionesmagras}.

\begin{coro}
Let $X$ be a continuum and $n \in \mathbb N$. The following is true.
\begin{itemize}
\item [(i)] $CC_n(X) \cap M(X)$ is of type $G_\delta$,
\item [(ii)] $NBO_n(X) \cap M(X)$ is of type $G_\delta$,
\item [(iii)] $S_n(X) \cap M(X)$ is of type $G_\delta$, and
\item [(iv)] $NSC_n(X) \cap M(X)$ is of type $G_\delta$.
\end{itemize}
\end{coro}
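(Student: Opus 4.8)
The plan is to combine the two immediately preceding results. The final corollary asserts that for each of the four hyperspaces $CC_n(X)$, $NBO_n(X)$, $S_n(X)$, and $NSC_n(X)$, the intersection with $M(X)$ is of type $G_\delta$. The key observation is that all of the necessary pieces are already in place: the preceding Proposition establishes that each of $CC_n(X)$, $NBO_n(X)$, $S_n(X)$, and $NSC_n(X)$ is itself $G_\delta$ in $2^X$, and the preceding Theorem establishes that $M(X)$ is $G_\delta$ in $2^X$. Thus each of the four sets in the corollary is an intersection of two $G_\delta$ sets.

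The engine of the proof is the standard fact that the class of $G_\delta$ subsets of a topological space is closed under finite (indeed countable) intersections. First I would recall or cite this closure property: if $P = \bigcap_{k} P_k$ and $Q = \bigcap_{j} Q_j$ with each $P_k, Q_j$ open, then $P \cap Q = \bigcap_{k,j} (P_k \cap Q_j)$ is a countable intersection of open sets, hence $G_\delta$. Then each of the four statements follows by applying this to the appropriate pair. For instance, for (i), since $CC_n(X)$ is $G_\delta$ by part (i) of the Proposition and $M(X)$ is $G_\delta$ by the Theorem, their intersection $CC_n(X) \cap M(X)$ is $G_\delta$; the remaining three cases are identical, invoking parts (ii), (iii), (iv) of the Proposition respectively.

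There is genuinely no obstacle here — this is a formal consequence of the two cited results and the elementary permanence of the $G_\delta$ class. The only point requiring a word of care is that the ambient space in which these are $G_\delta$ is the same, namely $2^X$ with the Hausdorff metric; since both cited results are stated as $G_\delta$ in $2^X$, the intersections are automatically $G_\delta$ in $2^X$ as well. A complete proof would therefore read, essentially in full: by the previous Proposition each of $CC_n(X)$, $NBO_n(X)$, $S_n(X)$, $NSC_n(X)$ is $G_\delta$ in $2^X$, by the previous Theorem $M(X)$ is $G_\delta$ in $2^X$, and the intersection of two $G_\delta$ sets is $G_\delta$; applying this to each pair yields (i)--(iv).
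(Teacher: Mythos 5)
Your proposal is correct and is exactly the argument the paper intends: the corollary is stated without proof precisely because it follows by intersecting the $G_\delta$ sets from the preceding Proposition with the $G_\delta$ set $M(X)$ from the preceding Theorem, and the class of $G_\delta$ sets in $2^X$ is closed under finite intersections. Nothing is missing.
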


\section{Properties of non-$n$-cut sets preserved by continuous functions}\label{continuousfunctions}

The results presented herein highlight how different types of mappings, such as onto mappings, open mappings, and monotone mappings, preserve some properties of non-$n$-cut sets.

We start with a lemma. 
\begin{lemma}\label{casiApertura}
Let $f:X \to Y$ be an onto mapping between continua. If $y\in Y$, and \(D\in 2^X\) are such that \(f^{-1}(y)\subset int_X(D)\), then \(y\in int_Y(f(D))\).
\end{lemma}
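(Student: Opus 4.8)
The plan is to exploit the fact that a continuous surjection between continua is a closed map, and then argue by passing to complements. The only hypothesis that will really be used beyond continuity and compactness is the surjectivity of $f$, and it enters precisely when we show that the open neighborhood of $y$ we produce actually lies inside $f(D)$.

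First I would set $E = X - int_X(D)$. Since $int_X(D)$ is open, $E$ is closed in the compact space $X$, hence compact. Because $f$ is continuous and $Y$ is a (Hausdorff) metric space, $f(E)$ is compact and therefore closed in $Y$. This is the step that replaces any appeal to openness of $f$: we do not need $f$ to be open, only that it is closed, which is automatic here.

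Next I would locate $y$ relative to $f(E)$. The hypothesis $f^{-1}(y) \subset int_X(D)$ says exactly that no point of $E = X - int_X(D)$ is mapped to $y$; equivalently, $y \notin f(E)$. Hence $W = Y - f(E)$ is an open subset of $Y$ containing $y$.

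Finally I would check the inclusion $W \subset f(D)$, which is where surjectivity is used. Given $w \in W$, by surjectivity choose $x \in X$ with $f(x) = w$. Since $w \notin f(E)$, the point $x$ cannot lie in $E$, so $x \in int_X(D) \subset D$, giving $w = f(x) \in f(D)$. Thus $y \in W \subset f(D)$ with $W$ open, which yields $y \in int_Y(f(D))$ and completes the argument. I do not anticipate a genuine obstacle here; the only point requiring care is remembering that the closedness of $f(E)$, not any openness of $f$, is what makes $W$ open, and that surjectivity is indispensable for the final inclusion (without it one could only conclude $W \cap f(X) \subset f(D)$).
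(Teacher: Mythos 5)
Your argument is correct and is essentially the paper's own proof written out in more detail: the paper likewise observes that $y \in Y - f(X - int_X(D)) \subset f(D)$ and that this set is open (by compactness of $X - int_X(D)$ and surjectivity of $f$). No differences worth noting.
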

\begin{proof}Let \(y\in Y\), \(D\in 2^X\) and assume \(f^{-1}(y)\subset int_X(D)\).
Notice that \(y\in Y-f(X-int_X(D))\subset f(D)\) and \(Y-f(X-int_X(D))\) is open in \(Y\). Thus \(y\in int_Y(f(D))\).
\end{proof}

\begin{prop}\label{ontomapimplication}
Let $f:X \to Y$ be an onto mapping between continua, let $A \in 2^Y$ and let \(n\in\mathbb N\). The following statements hold:
\begin{enumerate}
    \item If $f^{-1}(A) \in CC_n(X)$, then $A \in CC_n(Y)$;
    \item if $f^{-1}(A) \in NWC_n(X)$, then $A \in NWC_n(Y)$;
    \item if $f^{-1}(A) \in NB_n(X)$, then $A \in NB_n(Y)$;
    \item if $f^{-1}(A) \in NB_n^*(X)$, then $A \in NB_n^*(Y)$;
    \item if $f^{-1}(A) \in S_n(X)$, then $A \in S_n(Y)$;
    \item if $f^{-1}(A) \in NSC_n(X)$, then $A \in NSC_n(Y)$; and
    \item if $f^{-1}(A) \in NC_n(X)$, then $A \in NC_n(Y)$.
\end{enumerate}
\end{prop}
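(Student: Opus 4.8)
The plan is to prove all seven implications by a unified argument. The key observation is that each property of a set $A$ (and its complement $Y-A$) is defined in terms of the existence of certain subcontinua and finite witness sets, and that continuous onto maps interact well with these structures via preimages and images. Since $f$ is onto and continuous between compacta, $f$ is a closed map, and hence $f(f^{-1}(S))=S$ for every $S\subset Y$; moreover $f^{-1}(Y-A)=X-f^{-1}(A)$, so complements are respected by preimages. The idea is to take a witness for the property in $X$ (built from $f^{-1}(A)$) and push it forward through $f$ to obtain a witness for the corresponding property of $A$ in $Y$.

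First I would dispense with the trivial case: if $f^{-1}(A)=X$ then $A=Y$ (since $f$ is onto), so $A$ lies in every hyperspace by definition. Assume $f^{-1}(A)\neq X$, so $A\neq Y$. For the implications (1)--(6), the argument runs as follows. Suppose the relevant witness set $B\in F_n(X-f^{-1}(A))$ (or the family of witnesses, in the case of (3)) certifies the property for $X-f^{-1}(A)$. Set $B'=f(B)\in F_n(Y-A)$; note $B'$ has at most $n$ points and avoids $A$ because $B\subset X-f^{-1}(A)=f^{-1}(Y-A)$. Given a point $y$ or an open set $U$ in $Y-A$, I would transfer the datum back to $X$ via $f^{-1}$, apply the witness in $X$ to obtain a subcontinuum $D\subset X-f^{-1}(A)$ meeting $B$ with the required position relative to the preimage data, and then take $f(D)$, which is a subcontinuum of $Y-A$ meeting $B'$. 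The containment $f(D)\subset Y-A$ holds because $D\subset f^{-1}(Y-A)$; the intersection conditions $f(D)\cap U\neq\emptyset$ follow from $D\cap f^{-1}(U)\neq\emptyset$, and $f(D)\cap B'\neq\emptyset$ from $D\cap B\neq\emptyset$. For (7), the statement is simplest: each component of $X-f^{-1}(A)=f^{-1}(Y-A)$ maps into a component of $Y-A$, and $f$ restricted to $f^{-1}(Y-A)$ is onto $Y-A$, so $Y-A$ cannot have more components than $X-f^{-1}(A)$, giving at most $n$.

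The main obstacle is the treatment of the \emph{interior} conditions appearing in $CC_n$ (Definition \ref{principal2}.\ref{colocalconnect2}) and in $NBO_n$ (the $n$-$Qo$ property), since a generic continuous map need not send interiors to interiors. For the colocal-connectedness case (1), to verify $y\in \mathrm{int}_Y(f(D))$ from $f^{-1}(y)\subset \mathrm{int}_X(D)$ I would invoke Lemma \ref{casiApertura}; the difficulty is that the witness in $X$ only guarantees a single point $x\in f^{-1}(y)$ lies in $\mathrm{int}_X(D)$, not the whole fiber $f^{-1}(y)$. To repair this I would, for the given $y$, cover the compact fiber $f^{-1}(y)$ by finitely many interiors of witness continua $D_1,\dots,D_k$ (each meeting $B$ and contained in $X-f^{-1}(A)$) and take their union $D=\bigcup D_i$, which is a subcontinuum of $X-f^{-1}(A)$ (connected because each $D_i$ meets $B$, which is finite and must lie within a single relevant piece — here I would instead connect them through $B$ after possibly enlarging along the witness structure) with $f^{-1}(y)\subset\mathrm{int}_X(D)$; then Lemma \ref{casiApertura} yields $y\in\mathrm{int}_Y(f(D))$ and $f(D)\cap f(B)\neq\emptyset$. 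The same fiber-covering device handles the $\mathrm{int}(D)$ requirement in $NBO_n$. Consequently I would present implications (1)--(4) together (exploiting the $F_n$ witness structure), treat (5) and (6) with the finite-family formulations of $n$-$Q5$ and $n$-$Q6$ (pulling the open families back through $f^{-1}$ and pushing the resulting $C_n(X)$-witness forward, using that $f$ maps an at-most-$n$-component compactum to an at-most-$n$-component compactum), and finish with the short component-counting argument for (7).
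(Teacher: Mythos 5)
Your proposal is correct and follows essentially the same route as the paper: dispose of the case \(A=Y\), push witnesses forward via \(f\) and pull open sets back via \(f^{-1}\) for items (2)--(7), and for item (1) cover the compact fiber \(f^{-1}(y)\) by finitely many witness continua and apply Lemma \ref{casiApertura}. One correction to your parenthetical in that step: the union \(D=\bigcup_i D_i\) need not be connected in \(X\), and trying to ``connect the pieces through \(B\)'' is neither necessary nor obviously possible; what saves the argument (and is exactly what the paper uses) is that each \(D_i\) contains a point of \(f^{-1}(y)\), so each \(f(D_i)\) contains \(y\) and hence \(f(D)=\bigcup_i f(D_i)\) is a continuum, while Lemma \ref{casiApertura} only needs \(D\in 2^X\) with \(f^{-1}(y)\subset int_X(D)\). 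Also, your aside that the same fiber-covering device handles \(NBO_n\) is false --- the \(n\)-\(Qo\) witness only puts \(int(D)\) somewhere inside \(f^{-1}(U)\), not over an entire fiber, and the paper gives an explicit counterexample showing \(NBO_n\) is not preserved by arbitrary onto maps --- but since \(NBO_n\) is not part of the statement this does not affect your proof.
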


\begin{proof}
Observe that the statements are true if \(A=Y\). Suppose that \(A\neq Y\).
\begin{enumerate}
   \item Assume that $f^{-1}(A) \in CC_n(X)$. Let $B \in F_n(X-f^{-1}(A))$ witnessing that $X-f^{-1}(A)$ is a $n$-$Q1$ space. Let $y \in Y-A$. For each \(x\in f^{-1}(y)\), there exists a continuum \(D_{x}\) contained in \(X-f^{-1}(A)\) such that \(x\in int_X(D_{x})\) and \(B \cap D_{x} \neq \emptyset\). By compactness of \(f^{-1}(y)\), we can find a finite number of elements \(x_1,\dots,x_n\in f^{-1}(y)\) such that \(f^{-1}(y)\subset \bigcup_{i=1}^m D_{x_i}\). Notice that \(\bigcup_{i=1}^m D_{x_i}\subset X-f^{-1}(A)\) is in \(C_n(X)\) and contains \(f^{-1}(y)\) in its interior. By Lemma \ref{casiApertura}, \(f(\bigcup_{i=1}^m D_{x_i})\subset Y-A\) is a subcontinuum of \(Y\) such that \(f(B) \cap f(\bigcup_{i=1}^n D_{a_i}) \neq \emptyset\) and has \(y\) in its interior, so \(f(B)\) witnesses that \(Y-A\) is a $n$-$Q1$ space, therefore \(A\in CC_n(Y)\).

   \item Assume that $f^{-1}(A) \in NWC_n(X)$. Let $B \in F_n(X)$ witnessing that $X-f^{-1}(A)$ is a $n$-$Q2$ space, let $y\in Y - A$ and $x \in f^{-1}(y)$. Since $X-f^{-1}(A)$ is $n$-$Q2$, there exists a continuum $D \subset X$ such that $D \cap f^{-1}(A) = \emptyset$, $w \in D$ and $D \cap B \neq \emptyset$. Therefore $f(D) \subset Y-A$ is a continuum, $x \in f(D)$ and $f(B) \cap f(D) \neq \emptyset$, so \(f(B)\) witnesses that \(Y-A\) is a $n$-$Q2$ space, therefore \(A\in NWC_n(Y)\).
 
   \item Assume that $f^{-1}(A) \in NB_n(X)$. Let $y \in Y-A$ and $x \in f^{-1}(y)$. Let $B \in F_n(X)$ witnessing that $X-f^{-1}(A)$ is a $n$-$Q3$ space and \(x\in B\). Let \(U\subset Y-A\) be and non-empty open set. Notice that \(f^{-1}(U)\subset X-f^{-1}(A)\) is a non-empty open set of \(X\); since \(X-f^{-1}(A)\) is $n$-$Q3$, there exists a continuum \(D\subset X-f^{-1}(A)\) such that $D \cap B \neq \emptyset$ and \(D\cap f^{-1}(U)\neq \emptyset\), thus \(f(D)\subset Y-A\) is a continuum such that $f(D) \cap f(B) \neq \emptyset$, \(f(D)\cap U\neq \emptyset\) and $y \in f(B)$, so \(f(B)\) witnesses that \(Y-A\) is a $n$-$Q3$ space, therefore \(A\in NB_n(Y)\).
    
   \item Assume that $f^{-1}(A) \in NB_n^*(X)$. Let $B \in F_n(X)$ witnessing that $X-f^{-1}(A)$ is a $n$-$Q4$ space. Let \(U\subset Y-A\) be and non-empty open set. Notice that \(f^{-1}(U)\subset X-f^{-1}(A)\) is a non-empty open set of \(X\); since \(X-f^{-1}(A)\) is $n$-$Q4$, there exists a continuum \(D\subset X-f^{-1}(A)\) such that $D \cap B \neq \emptyset$ and \(D\cap f^{-1}(U)\neq \emptyset\), thus \(f(D)\subset Y-A\) is a continuum such that $f(D) \cap f(B) \neq \emptyset$ and \(f(D)\cap U\neq \emptyset\), so \(f(B)\) witnesses that \(Y-A\) is a $n$-$Q4$ space, therefore \(A\in NB_n^*(Y)\).
  
   \item Assume that $f^{-1}(A) \in S_n(X)$. Let \(U_1,\dots, U_m\) be a finite number of non-empty open sets contained in \(Y-A\). Since \(f^{-1}(U_1),\dots,f^{-1}(U_m)\) is a finite number of non-empty open sets contained in \(X-f^{-1}(A)\) and $X-f^{-1}(A)$ is a $n$-$Q5$ space, there exists an element $D \in C_n(X)$ such that \(D\subset X-f^{-1}(A)\), \(D\cap f^{-1}(U_i)\neq\emptyset\) for each \(i\in\{1,\dots,m\}\), hence \(f(D)\subset Y-A\) is an element of $C_n(Y)$ such that \(f(D)\cap U_i\neq\emptyset\) for each \(i\in\{1,\dots,m\}\). So \(Y-A\) is a $n$-$Q5$ space, therefore \(A\in S_n(Y)\).
   
   \item Assume that $f^{-1}(A) \in NSC_n(X)$. Let \(U_1,\dots, U_{n+1}\) be non-empty open sets contained in \(Y-A\). Since \(f^{-1}(U_1),\dots,f^{-1}(U_{n+1})\) are non-empty open sets contained in \(X-f^{-1}(A)\) and $X-f^{-1}(A)$ is a $n$-$Q6$ space, there exists an element $D \in C_n(X)$ such that \(D\subset X-f^{-1}(A)\), \(D\cap f^{-1}(U_i)\neq\emptyset\) for each \(i\in\{1,\dots,n+1\}\), hence \(f(D)\subset Y-A\) is an element of $C_n(Y)$ such that \(f(D)\cap U_i\neq\emptyset\) for each \(i\in\{1,\dots,n+1\}\). So \(Y-A\) is a $n$-$Q6$ space, therefore \(A\in NSC_n(Y)\).

   \item Assume that $f^{-1}(A) \in NC_n(X)$. Since \(Y-A=f(X-f^{-1}(A))\) and \(X-f^{-1}(A)\) has at most \(n\) components, then \(Y-A\) has at most \(n\) components. Therefore \(A\in NC_n(X)\).
\end{enumerate}
\end{proof}

In the following example, we show that it is not possible to extend Proposition \ref{ontomapimplication} to the hyperspace $NBO_n(X)$. A weaker result is presented in Proposition \ref{weaker}.

\begin{example}
Let $Y$ be the Knaster buckethandle continuum, $p \in Y$ be the endpoint of $Y$, and let $\alpha:[0,1)\rightarrow K$ be an onto injective mapping such that $\alpha(0)=p$, where \(K\) is the composant of \(Y\) containing \(p\). Define $X=(Y\times\{0\})\cup\{(\alpha(t),1-t): t\in[0,1)\}\subset Y\times[0,1]$ and let $f:X\to Y$ be the projection onto $Y$. Then $f$ is an onto mapping. Note that for $z \in Y - K$, $f^{-1}(z)=\{(z,0)\} \in  {NBO}_1(X)$, and $\{z\} \notin \bigcup_{n=1}^\infty {NBO}_n(Y)$.
\end{example}

\begin{prop}\label{weaker}
Let $f:X \to Y$ be an onto and open mapping between continua, let $A \in 2^Y$ and let \(n\in\mathbb N\). If $f^{-1}(A) \in NBO_n(X)$, then $A \in NBO_n(Y)$.
\end{prop}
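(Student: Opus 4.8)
The plan is to carry out the argument entirely through the combinatorial characterization of $NBO_n$ furnished by Proposition \ref{lemanbo}; this bypasses the witness set $B\in F_n$ and reduces the problem to transporting a single element of $C_n$ across $f$. If $A=Y$ then $A\in NBO_n(Y)$ holds by definition (and the characterization holds vacuously, since $Y-A=\emptyset$ admits no non-empty open sets), so I may assume $A\neq Y$; note that then $f^{-1}(A)\neq X$ because $f$ is onto. By Proposition \ref{lemanbo} it suffices to show that for each non-empty finite family $\mathcal U=\{U_1,\dots,U_m\}$ of non-empty open subsets of $Y-A$, there exists $D'\in C_n(Y-A)$ with $int_Y(D')\cap U_i\neq\emptyset$ for every $i$.

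The first step is to pull the family back. Since $f$ is continuous and onto and each $U_i\subset Y-A$ is non-empty and open, each $f^{-1}(U_i)$ is a non-empty open subset of $X-f^{-1}(A)=f^{-1}(Y-A)$. Applying Proposition \ref{lemanbo} to the hypothesis $f^{-1}(A)\in NBO_n(X)$ and the family $\{f^{-1}(U_1),\dots,f^{-1}(U_m)\}$ produces an element $D\in C_n(X-f^{-1}(A))$ with $int_X(D)\cap f^{-1}(U_i)\neq\emptyset$ for each $i$. Since $A$ is closed, $X-f^{-1}(A)$ is open in $X$, so the interior computed in the subspace $X-f^{-1}(A)$ agrees with $int_X$, and there is no ambiguity in the notation.

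The second step is to push $D$ forward and verify that $D'=f(D)$ works. As $D\subset X-f^{-1}(A)$, we get $f(D)\subset Y-A$; moreover $f(D)$ is compact and is a union of at most $n$ continuous images of the components of $D$, hence $D'=f(D)\in C_n(Y-A)$. It remains to check the interior condition, and this is the sole point where openness of $f$ is invoked: since $f$ is open, $f(int_X(D))$ is an open subset of $Y$ contained in $f(D)$, whence $f(int_X(D))\subset int_Y(f(D))$. For each $i$, choosing $x\in int_X(D)\cap f^{-1}(U_i)$ gives $f(x)\in f(int_X(D))\cap U_i\subset int_Y(D')\cap U_i$, so $int_Y(D')\cap U_i\neq\emptyset$. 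The converse direction of Proposition \ref{lemanbo} then yields $A\in NBO_n(Y)$.

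The main obstacle is exactly the interior condition in the second step. Unlike the hyperspaces handled in Proposition \ref{ontomapimplication}, the defining condition of $n$-$Qo$ requires the witnessing continua to meet the prescribed open sets \emph{in their interiors}, and a merely continuous onto map need not carry interior points to interior points; this is precisely the phenomenon exhibited in the preceding example, and it is why the openness hypothesis cannot be dropped. I also note that Lemma \ref{casiApertura} is not strong enough to substitute for openness here, since that lemma requires an entire fiber $f^{-1}(y)$ to lie in $int_X(D)$, whereas in this argument I control only a single point of the fiber inside $int_X(D)$.
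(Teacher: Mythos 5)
Your proof is correct. The one point where care is genuinely needed --- that a merely continuous surjection need not send interior points to interior points, so that openness of $f$ is what yields $f(int_X(D))\subset int_Y(f(D))$ --- is handled properly, and your side remarks (why $f^{-1}(A)\neq X$, why $int$ in $X-f^{-1}(A)$ agrees with $int_X$, why Lemma \ref{casiApertura} does not suffice) are all accurate. The route differs from the paper's, however. The paper argues directly from Definition \ref{principal2}.\ref{dnbopens2}: it takes the witness $B\in F_n(X-f^{-1}(A))$, pulls back a single open set $U\subset Y-A$, obtains one continuum $D$ with $B\cap int(D)\neq\emptyset\neq int(D)\cap f^{-1}(U)$, and checks that $f(B)$ witnesses that $Y-A$ is $n$-$Qo$; this is a short, self-contained argument using only the definition. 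You instead deliberately bypass the witness set and run everything through the two-directional characterization of Proposition \ref{lemanbo}, pulling back a finite family and pushing forward an element of $C_n$. This buys a cleaner bookkeeping (no need to track where the witness points land) at the cost of invoking the converse direction of Proposition \ref{lemanbo}, whose proof (the maximal-family/$\alpha(D)$ claim) is the heaviest ingredient you use; the paper's direct argument avoids that machinery entirely. Both proofs hinge on the same essential use of openness, and both are valid.
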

\begin{proof}
    Assume that $f^{-1}(A) \in NBO_n(X)$. If $A=Y$, then $A\in NBO_n(X)$. Assume \(A\neq Y\). Let $B \in F_n(X)$ witnessing that $X-f^{-1}(A)$ is a $n$-$Qo$ space.
    Let $U$ be an open set of $Y-A$. Then, there exists $D \in C(X)$ such that $D\subset X-f^{-1}(A)$ and $B\cap D \neq \emptyset \neq int(D) \cap f^{-1}(U)$. Hence, $f(D) \subset Y-A$ and $f(B)\cap f(D) \neq \emptyset $, and since $f$ is open, $int(f(D)) \cap U \neq \emptyset$, which implies that $Y-A$ is a $n$-$Qo$ space, therefore \(A\in NBO_n(Y)\).
\end{proof}

\begin{prop}\label{mono}
Let $f:X \to Y$ be an onto monotone mapping between continua, let $A \in 2^Y$ and let \(n\in\mathbb N\). The following statements hold:
\begin{enumerate}
    \item If $A \in CC_n(Y)$, then $f^{-1}(A) \in CC_n(X)$;
    \item if $A \in NWC_n(Y)$, then $f^{-1}(A) \in NWC_n(X)$; and
    \item if $A \in NC_n(Y)$, then $f^{-1}(A) \in NC_n(X)$.
\end{enumerate}
\end{prop}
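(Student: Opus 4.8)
The plan is to exploit the two defining features of a monotone surjection $f$ between continua: its point-preimages $f^{-1}(y)$ are subcontinua, and (since $X$ is compact and $Y$ is Hausdorff) $f$ is a closed map. From these I will use two consequences throughout. First, if $D$ is a subcontinuum of $Y$, then $f^{-1}(D)$ is a subcontinuum of $X$; this is the standard fact that monotone preimages of continua are continua. Second, and needed only for part (3), if $W\subset Y$ is any connected set (not necessarily closed), then $f^{-1}(W)$ is connected. I would record this as a preliminary claim and prove it in a line: if $f^{-1}(W)=P\cup Q$ were a separation, then, since each fiber over $W$ lies wholly in $P$ or wholly in $Q$, the images $f(P)$ and $f(Q)$ partition $W$ into nonempty pieces; because $f$ is closed one has $cl_Y(f(P))\subset f(cl_X(P))$, which is disjoint from $f(Q)$ (a point of $f(Q)$ has its whole fiber in $Q$, and $cl_X(P)\cap Q=\emptyset$), and symmetrically, so $f(P),f(Q)$ would separate $W$, a contradiction. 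I will also use the elementary relation $f^{-1}(int_Y(D))\subset int_X(f^{-1}(D))$, coming from continuity.

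For (1) and (2) the construction is a lifting of the witnessing set. Assuming $A\neq Y$ (the case $A=Y$ gives $f^{-1}(A)=X$, which lies in every hyperspace by the ``$=X$'' clause), let $B\in F_n(Y-A)$ witness that $Y-A$ is $n$-$Q1$ (resp. $n$-$Q2$) as in Definition \ref{principal2}.\ref{colocalconnect2} (resp. \ref{principal2}.\ref{nwcn2}). For each $b\in B$ pick a point $b'\in f^{-1}(b)$ and set $B'=\{b':b\in B\}\in F_n(X-f^{-1}(A))$. Given $x\in X-f^{-1}(A)$, put $y=f(x)\in Y-A$ and take the continuum $D\subset Y-A$ provided by the witness, with some $b\in B\cap D$. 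Then $f^{-1}(D)$ is a subcontinuum of $X-f^{-1}(A)$ meeting $B'$, since $b'\in f^{-1}(b)\subset f^{-1}(D)$. In case (1) we have $y\in int_Y(D)$, hence $x\in f^{-1}(int_Y(D))\subset int_X(f^{-1}(D))$, so $f^{-1}(D)$ witnesses the $n$-$Q1$ condition at $x$; in case (2) only $y\in D$ is needed, giving $x\in f^{-1}(D)$ for the $n$-$Q2$ condition. Thus $B'$ witnesses that $X-f^{-1}(A)$ is $n$-$Q1$ (resp. $n$-$Q2$), so $f^{-1}(A)\in CC_n(X)$ (resp. $f^{-1}(A)\in NWC_n(X)$).

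For (3), write $Y-A=\bigsqcup_{i=1}^m K_i$ with $K_1,\dots,K_m$ its components, where $m\le n$. Since $X-f^{-1}(A)=f^{-1}(Y-A)=\bigsqcup_{i=1}^m f^{-1}(K_i)$ and each $f^{-1}(K_i)$ is connected by the preliminary claim, the space $X-f^{-1}(A)$ is partitioned into at most $n$ connected pieces; as every connected piece lies inside a single component, $X-f^{-1}(A)$ has at most $n$ components, that is, $f^{-1}(A)\in NC_n(X)$.

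The main obstacle is precisely the preliminary claim used in (3): the components $K_i$ of $Y-A$ need not be compact, so the familiar ``preimage of a subcontinuum is a subcontinuum'' does not apply directly, and one must establish connectedness of preimages of \emph{arbitrary} connected sets, which is exactly where closedness of $f$ (compactness of $X$) is used. Parts (1) and (2), by contrast, only ever pull back the genuine subcontinua $D$ furnished by the witnessing definitions, so they avoid this subtlety entirely.
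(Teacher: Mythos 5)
Your proposal is correct and follows essentially the same route as the paper: lift the witnessing finite set $B$ to a selection $B'$ with $f(B')=B$ and pull back the witnessing continua via $f^{-1}$, which preserves subcontinua under a monotone surjection. The only difference is in part (3), where the paper simply asserts that $X-f^{-1}(A)=f^{-1}(Y-A)$ has at most $n$ components, while you correctly identify and prove the underlying fact that a closed monotone map pulls back arbitrary connected sets (not just subcontinua) to connected sets; this is a worthwhile clarification but not a different method.
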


\begin{proof}
Observe that the statements are true if \(A=Y\). Suppose that \(A\neq Y\).
\begin{enumerate}
    \item Assume $A \in CC_n(Y)$. Let $B \in F_n(Y-A)$ witnessing that $Y-A$ is a $n$-$Q1$ space and $B' \in F_n(X- f^{-1}(A))$ such that $f(B')=B$. Let \(x \in X-f^{-1}(A)\). Choose $D \in C(Y)$ such that \(D\subset Y-A\), \(B \cap D \neq \emptyset\) and \(f(x)\in int_Y(D)\). Hence \(f^{-1}(D)\subset X-f^{-1}(A)\) is a continuum with \(B' \cap f^{-1}(D) \neq \emptyset\) and \(x\in int_X(f^{-1}(D))\). So \(f^{-1}(A)\in CC_n(X)\).

     \item Assume $A \in CC_n(Y)$. Let $B \in F_n(Y-A)$ witnessing that $Y-A$ is a $n$-$Q2$ space and $B' \in F_n(X- f^{-1}(A))$ such that $f(B')=B$. Let \(x \in X-f^{-1}(A)\). By the properties of $B$, there exists a continuum \(D\subset Y-A\) such that \(f(x)\in D\) and $B \cap D \neq \emptyset$. Hence, \(f^{-1}(D)\subset X-f^{-1}(A)\) is a continuum with \(x\in f^{-1}(D)\) and $f^{-1}(D) \cap B' \neq \emptyset$. So \(f^{-1}(A)\in NWC_n(X)\).

    \item Let $A \in NC_n(Y)$. Observe that $f^{-1}(Y-A)=X-f^{-1}(A)$ has at most $n$ components, therefore $f^{-1}(A) \in NC_n(X)$.
\end{enumerate}
\end{proof}

The following example shows that Theorem \ref{mono} does not hold for the hyperspaces $NB_n(Y), NB_n^*(Y), S_n(Y)$ and $NSC_n(Y)$. We do not know if Proposition \ref{mono} holds for $NBO_n(X)$.
\begin{example}\label{penultimoejemplo}
   Let $Y$ be the dyadic solenoid, let $S \subset Y$ be an arc and let $h:I\times \{0\} \to S$ be a homeomorphism. Take $X=Y \cup_h (I \times I)$  and let $f:X \to Y$ be defined as $f(x)=x$, if $x \in Y$, and $f(x)=h(x_1,0)$ if $x=(x_1,x_2) \in I \times I$. Observe that $f:X \to Y$ is an onto, monotone mapping. However, for $A=\{ h(\frac 12,0)\} \in NB_1(Y)$ and $f^{-1}(A) \notin NSC_1(X)$.
\end{example}

However, if we add to the conditions of Theorem \ref{mono} that the map \(f:X\to Y\) is open, Theorem \ref{mono} holds for the remaining hyperspaces of non-cut sets.

\begin{prop}\label{monopen}
Let $f:X \to Y$ be an onto, open and monotone mapping between continua, let $A \in 2^Y$ and let \(n\in\mathbb N\). The following statements hold:
\begin{enumerate}
    \item If $A \in NB_n(Y)$, then $f^{-1}(A) \in NB_n(X)$;
    \item if $A \in NBO_n(Y)$, then $f^{-1}_n(A) \in NBO_n(X)$;
    \item if $A \in NB_n^*(Y)$, then $f^{-1}(A) \in NB_n^*(X)$;
    \item if $A \in S_n(Y)$, then $f^{-1}(A) \in S_n(X)$; and
    \item if $A \in NSC_n(Y)$, then $f^{-1}(A) \in NSC_n(X)$.
\end{enumerate}
\end{prop}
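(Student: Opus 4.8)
The plan is to treat all five items with a single template that moves the open test sets forward through $f$, applies the corresponding property in $Y-A$, and pulls the resulting continuum back through $f$. Three standing observations drive everything. First, since $f$ is onto, $f^{-1}(Y-A)=X-f^{-1}(A)$ and every point of a witness set $B\subset Y-A$ has a preimage in $X-f^{-1}(A)$. Second, since $f$ is monotone, $f^{-1}(E)$ is a subcontinuum of $X$ whenever $E$ is a subcontinuum of $Y$; more generally, if $E\in C_n(Y)$ has components $E_1,\dots,E_k$ with $k\le n$, then $f^{-1}(E)=f^{-1}(E_1)\cup\dots\cup f^{-1}(E_k)$ is a disjoint union of $k$ subcontinua, which are precisely its components, so $f^{-1}(E)\in C_n(X)$. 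Third, since $f$ is open, $f(V)$ is a non-empty open subset of $Y-A$ whenever $V$ is a non-empty open subset of $X-f^{-1}(A)$.

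For the items phrased through a finite witness family of open sets, namely $S_n$ and $NSC_n$, I would start from a finite family $V_1,\dots,V_m$ of non-empty open subsets of $X-f^{-1}(A)$ (with $m$ arbitrary for $S_n$ and $m=n+1$ for $NSC_n$), pass to the family $f(V_1),\dots,f(V_m)$ of non-empty open subsets of $Y-A$, and apply that $Y-A$ is $n$-$Q5$ (resp. $n$-$Q6$) to obtain $D\in C_n(Y-A)$ with $D\cap f(V_i)\neq\emptyset$ for each $i$. Then $f^{-1}(D)\in C_n(X)$ is contained in $X-f^{-1}(A)$, and from $D\cap f(V_i)\neq\emptyset$ one recovers a point of $V_i$ whose image lies in $D$, giving $f^{-1}(D)\cap V_i\neq\emptyset$; this witnesses the required property.

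For the items phrased through a point-set witness $B\in F_n$, namely $NB_n$ and $NB_n^*$, I would fix $B\in F_n(Y-A)$ from the hypothesis and choose $B'\subset X-f^{-1}(A)$ with $f(B')=B$ by picking one preimage of each point of $B$; in the $NB_n$ case, when producing the witness at a given $x\in X-f^{-1}(A)$, I would use the set associated by $n$-$Q3$ to $f(x)$ and take $x$ itself as the chosen preimage of $f(x)\in B$, so that $x\in B'$. Given a non-empty open $V\subset X-f^{-1}(A)$, applying the property in $Y-A$ to $f(V)$ yields a continuum $D\subset Y-A$ meeting $B$ and $f(V)$; then $f^{-1}(D)\subset X-f^{-1}(A)$ is a continuum, the equality $f(B')=B$ together with $B\cap D\neq\emptyset$ gives $B'\cap f^{-1}(D)\neq\emptyset$, and $D\cap f(V)\neq\emptyset$ gives $f^{-1}(D)\cap V\neq\emptyset$ as above.

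The one case needing an extra ingredient is $NBO_n$, because its definition involves interiors. Here I would use the elementary fact, valid for any continuous $f$, that $f^{-1}(int_Y(E))\subset int_X(f^{-1}(E))$, together with the remark that for $E\subset Y-A$ the interior of $E$ in the open set $Y-A$ coincides with its interior in $Y$ (and similarly in $X$). With $B,B'$ as before and $V$ a non-empty open subset of $X-f^{-1}(A)$, the property of $Y-A$ applied to $f(V)$ gives a continuum $D\subset Y-A$ with $B\cap int(D)\neq\emptyset\neq int(D)\cap f(V)$; choosing $b\in B\cap int(D)$ places its preimage in $B'$ inside $f^{-1}(int(D))\subset int(f^{-1}(D))$, and choosing a point of $V$ mapping into $int(D)$ places it in $int(f^{-1}(D))\cap V$, so that $B'$ witnesses that $X-f^{-1}(A)$ is $n$-$Qo$. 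I expect the main obstacle to be the bookkeeping around the two topological claims that make monotonicity and openness do their work, namely that the preimage of a set with at most $n$ components again has at most $n$ components (no components merge and none are created), and that preimages of interiors sit inside interiors of preimages; once these are in hand, each of the five verifications is a routine transfer of the intersection conditions across $f$ and $f^{-1}$.
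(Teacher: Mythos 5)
Your proposal is correct and follows essentially the same route as the paper's proof: push open test sets forward via openness of $f$, apply the hypothesis in $Y-A$, pull the resulting continuum (or $C_n$-element) back via monotonicity, and lift the witness set $B$ to a selection $B'$ of preimages (taking $x$ itself as the lift of $f(x)$ in the $NB_n$ case), with the inclusion $f^{-1}(\mathrm{int}_Y(D))\subset \mathrm{int}_X(f^{-1}(D))$ handling the $NBO_n$ case exactly as the paper does.
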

\begin{proof}
Observe that the statements are true if \(A=Y\). Suppose that \(A\neq Y\).
\begin{enumerate}
    \item Assume $A \in NB_n(Y)$. Let \(x\in X-f^{-1}(A)\). Since \(Y-A\) is a $n$-$Q3$ space, let $B\in F_n(Y-A)$ satisfying that \(f(x)\in B\) and for each  non-empty open set \(U\subset Y-A\), there exists a continuum \(D\subset Y-A\) such that \(D\cap B\neq\emptyset\neq D\cap U\).

    Let \(B'\in F_n(X-f^{-1}(A))\) be such that \(x\in B'\) and \(f(B')=B\).       
    If $V \subset X-f^{-1}(A)$ is a non-empty open set of $X$, then $f(V)$ is open in \(Y-A\), so there exists a continuum $D$ such that $B \cap D \neq \emptyset\neq D \cap f(V)$. Hence, $f^{-1}(D)\subset X-f^{-1}(A)$ is a continuum and $f^{-1}(D)\cap V \neq \emptyset\neq f^{-1}(D)\cap B'$. So, \(X-f^{-1}(A)\) is a $n$-$Q3$ space and $f^{-1}(A) \in NB_n(X)$.

    \item Assume $A \in NBO_n(Y)$. Let $B \in F_n(Y-A)$ witnessing that $Y-A$ is a $n$-$Qo$ space. Let \(B'\in F_n(X-f^{-1}(A))\) be such that \(f(B')=B\). Consider a non-empty open set $U\subset X-f^{-1}(A)$. Since $f(U)\subset Y-A$ is non-empty and open, there exists a continuum $D\subset Y-A$ such that $B \cap int (D) \neq \emptyset\neq int(D) \cap f(U)$. Hence $f^{-1}(D)\subset X-f^{-1}(A)$ is a continuum and $B'\cap int(f^{-1}(D))\neq\emptyset\neq int(f^{-1}(D))\cap U $. So \(X-f^{-1}(A)\) is a $n$-$Qo$ space and $f^{-1}(A) \in NBO_n(X)$.

    \item Assume $A \in NB_n^*(Y)$. Let $B \in F_n(Y-A)$ witnessing that $Y-A$ is a $n$-$Q4$ space. Let \(B'\in F_n(X-f^{-1}(A))\) be such that \(f(B')=B\). Consider a non-empty open set $U\subset X-f^{-1}(A)$. Since $f(U)\subset Y-A$ is non-empty and open, there exists a continuum $D\subset Y-A$ such that $B \cap D\neq \emptyset\neq D\cap f(U)$. Hence $f^{-1}(D)\subset X-f^{-1}(A)$ is a continuum and $B'\cap f^{-1}(D)\neq\emptyset\neq f^{-1}(D)\cap U $. So \(X-f^{-1}(A)\) is a $n$-$Q4$ space and $f^{-1}(A) \in NB_n^*(X)$.

   \item Assume $A \in S_n(Y)$. Let \(U_1,\dots,U_m\) be a finite number of non-empty open sets contained in \(X-f^{-1}(A)\). Since \(f(U_1),\dots,f(U_m)\) is a finite number of non-empty open sets contained in $Y-A$ and $A \in S_n(Y)$, there exists an element $D \in C_n(Y-A)$ such that \(D\cap f(U_i)\neq\emptyset\) for each \(i\in\{1,\dots,m\}\). Hence $f^{-1}(D) \in C_n(X-f^{-1}(A))$, satisfies \(f^{-1}(D)\cap U_i\neq\emptyset\) for each \(i\in\{1,\dots,m\}\). So, \(f^{-1}(A)\in S_n(X)\).

    \item Assume $A \in NSC_n(Y)$. Let \(U_1,\dots,U_{n+1}\) be a finite number of non-empty open sets contained in \(X-f^{-1}(A)\). Since \(f(U_1),\dots,f(U_{n+1})\) is a collection of $n+1$ non-empty open sets contained in $Y-A$ and $A \in NSC_n(Y)$, there exists an element $D \in C_n(Y)$ such that $D\subset Y-A$ and \(D\cap f(U_i)\neq\emptyset\) for each \(i\in\{1,\dots,n+1\}\). Hence $f^{-1}(D) \in C_n(X)$, \(f^{-1}(D)\subset X-f^{-1}(A)\) and \(f^{-1}(D)\cap U_i\neq\emptyset\) for each \(i\in\{1,\dots,n+1\}\). So, \(f^{-1}(A)\in NSC_n(X)\).
\end{enumerate}
\end{proof}

\section{Relations between $X$ and the hyperspaces of non-cut sets of degree $n$ of $X$}\label{xhn(x)}

 The first result we present provides an initial insight into the relationship between hyperspaces of non-$n$-cut sets, when the original space exhibits certain specific characteristics. We will explore properties of the original space that may lead to the coincidence of some of its hyperspace of non-$n$-cut sets. On the other hand, while the fact that a set is a non-$n$-cut set may not be particularly relevant by itself, studying the hyperspace of these sets can lead to interesting conclusions.

\begin{theorem}\label{localyconnect}
If X is a locally connected continuum, then
${NC}_1(X) =   {CC}_1(X)$.
\end{theorem}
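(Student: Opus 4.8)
The plan is to prove the two inclusions separately, noting that one of them is immediate. By Theorem \ref{contentions}, the inclusion $CC_1(X) \subset NC_1(X)$ holds for every continuum, so the entire content of the statement is the reverse inclusion $NC_1(X) \subset CC_1(X)$. Accordingly, I fix $A \in NC_1(X)$. If $A = X$ then $A \in CC_1(X)$ by definition, so I may assume $A \neq X$ and set $Y = X - A$, which is a nonempty, open, connected subset of $X$ (it has at most one component and is nonempty). Since $Y$ is open in $X$, interiors computed in $Y$ agree with interiors computed in $X$ for subsets of $Y$, so to conclude $A \in CC_1(X)$ it suffices to exhibit a single point $b \in Y$ such that every $x \in Y$ lies in $\mathrm{int}(D)$ for some subcontinuum $D \subset Y$ with $b \in D$; that is, to show $Y$ is $1$-$Q1$ in the sense of Definition \ref{principal2}.

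The key tool supplied by local connectedness is that each point of $Y$ can be surrounded by a subcontinuum of $Y$ containing it in its interior. Concretely, given $x \in Y$, I would first use regularity of $X$ to pick an open set $V$ with $x \in V \subset \mathrm{cl}(V) \subset Y$, and then use local connectedness to pick a connected open $U$ with $x \in U \subset V$. Because a nondegenerate continuum has no isolated points, $U$ is infinite, so $\mathrm{cl}(U)$ is a nondegenerate subcontinuum of $Y$ with $x \in U \subset \mathrm{int}(\mathrm{cl}(U))$. This local-continuum construction is the only place local connectedness is used.

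Now fix any $b \in Y$ and define $W = \{x \in Y : \text{there is a subcontinuum } D \subset Y \text{ with } x \in \mathrm{int}(D) \text{ and } b \in D\}$. I would show $W$ is nonempty, open, and closed in $Y$. Nonemptiness follows since the continuum $\mathrm{cl}(U)$ built around $b$ shows $b \in W$. Openness is immediate, since any witness $D$ for a point $x$ simultaneously witnesses every point of $\mathrm{int}(D)$. For closedness in $Y$, I would take $x \in Y \cap \mathrm{cl}(W)$, build a local continuum neighborhood $\mathrm{cl}(U)$ of $x$ as above, choose $w \in W \cap U$ with witness $D$, and observe that $D \cup \mathrm{cl}(U)$ is a subcontinuum of $Y$ (the two continua share the point $w$) that contains $b$ and has $x$ in its interior; hence $x \in W$. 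As $Y$ is connected, a nonempty clopen subset must be all of $Y$, so $W = Y$, and therefore $\{b\}$ witnesses that $Y$ is $1$-$Q1$, giving $A \in CC_1(X)$.

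The step I expect to be the main obstacle is the closedness of $W$, since it is where an existing witness continuum is glued to a freshly constructed local continuum neighborhood, relying on the fact that a union of two continua meeting in a common point is again a continuum. The remaining ingredients (nonemptiness, openness, and the clopen-in-a-connected-space conclusion) are routine once the local continuum neighborhoods from the second paragraph are available.
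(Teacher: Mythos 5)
Your proof is correct, but it takes a different route from the paper's. Both arguments begin the same way (the inclusion $CC_1(X)\subset NC_1(X)$ is Theorem \ref{contentions}, and local connectedness is used to surround each point of the open set $Y=X-A$ with a subcontinuum of $Y$ having that point in its interior), but they diverge in how they connect an arbitrary point of $Y$ to the distinguished point $b$. The paper invokes Theorem 8.26 of \cite{nadler}: an open connected subset of a locally connected continuum is arcwise connected, so one takes an arc in $Y$ from $x$ to $y$ and adjoins the closures of small connected open neighborhoods of the endpoints to get the witness continuum. You instead avoid the arcwise-connectivity theorem entirely by a clopen argument: the set $W$ of points admitting a witness continuum through $b$ is nonempty, open (any witness for $x$ witnesses all of $\mathrm{int}(D)$), and closed in $Y$ (glue a witness for a nearby point $w\in W\cap U$ to the local continuum $\mathrm{cl}(U)$ around $x$ along the common point $w$), hence equals $Y$ by connectedness. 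Your version is more self-contained and elementary, using only the fact that a union of two continua with a common point is a continuum; the paper's is shorter once the arcwise-connectivity theorem is granted, and produces a slightly more concrete witness. You also take care of two small points the paper glosses over: that interiors relative to $Y$ and to $X$ agree because $Y$ is open, and that the witness continuum is nondegenerate.
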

\begin{proof}
We only have to prove that \( {NC}_1(X)\subset  {CC}_1(X)\). Let $A \in  {NC}_1(X)$ and let \(x,y\in X-A\). Let $U_x$ and \(U_y\) be open connected neighborhoods of $x$ and \(y\), respectively, such that $cl(U_x) \cap A= \emptyset=cl(U_y)\cap A$. Since $X-A$ is an open connected set, $X-A$ is arcwise connected (Theorem 8.26 of \cite{nadler}), so let \(Y\) be an arc in \(X-A\) joining \(x\) to \(y\). Observe that $D= Y\cup cl(U_x)\cup cl(U_y)$ is a continuum avoiding \(A\) with \(x,y\in int(D)\). In conclusion, $A \in  {CC}_1(X)$.
\end{proof}

\begin{coro}\label{localyconnectn}
If X is a locally connected continuum, then
$  {CC}_n(X) =   {NC}_n(X)$ for each \(n\in\mathbb N\).
\end{coro}
\begin{proof}
See Corollary \ref{ccnncn}.
\end{proof}

The following example shows that Theorem \ref{localyconnect} does not hold if we replace the locally connected continuum condition with the mutually aposyndetic continuum condition or the Kelley continuum condition.

\begin{example}
Let $X$ the suspension over the Cantor set, so \(X\) is mutually aposyndetic and Kelley. Let $p$ and $q$ the vertices of $X$, and let $A$ be a set consisting of two points in the same arc-component of $X-\{p,q\}$.
Then, $A \in   NC_1(X),$ $A\notin NB_1(X)$ and $A \notin CC_1(X)$. Notice that \(F_1(X)=NC_1(X)\cap F_1(X)=CC_1(X)\cap F_1(X)\).
\end{example}

We do not know a non-locally connected continuum \(X\) where $NC_1(X)=NWC_1(X)$. Hence, we find the following question interesting:

\begin{question}\label{pregunta}
If ${CC}_1(X)=  {NC}_1(X)$ or ${NWC}_1(X)=  {NC}_1(X)$, is $X$ locally connected?
\end{question}

\begin{note}\label{indecompo}
When $X$ is an indecomposable continuum, we have $ {CC}_n(X)= {NWC}_n(X)= {NBO}_n(X)=\{X\}$ and $C_n(X) \subset  {NB}_n^*(X)$, for each \(n\in\mathbb N\).
\end{note}


With the following example, we show that $\{X\}={CC}_n(X)={NWC}_n(X)$ $={NBO}_n(X)$ does not imply that the space is indecomposable. 

\begin{example}
    For a space \(X=A\cup B\), where \(A\) and \(B\) are two indecomposable continua and \(A\cap B\) consists only of one point, we have \(CC_n(X)=NWC_n(X)=NBO_n(X)=\{X\}\), for each \(n\in\mathbb N\).
\end{example}


As mentioned earlier, the condition of $X$ being locally connected implies that $NC_n(X)=CC_n(X)$ for each \(n\in\mathbb N\). As part of the research related to Question \ref{pregunta}, for a continuum $X$ and a set $A \in 2^X$, we are exploring certain conditions under which $A \in NC_n(X)$ implies $A \in CC_n(X)$. With that goal, we generalize the definition of semi-local connectivity given by Whyburn in \cite{whyburn} from points to sets, and we define a continuum $X$ to be \textit{semi-locally connected at a set} $A$ provided that if $U$ is an open subset of $X$ containing $A$, there is an open subset $V$ of $X$ lying in $U$ and containing $A$ such that $X - V$ has a finite number of components.

\begin{prop}\label{0dim}\label{equivalencia}
Let $X$ be a continuum. If $A \in  \bigcup_{i=1}^\infty CC_i(X)$, then $X$ is semi-locally connected at $A$.
\end{prop}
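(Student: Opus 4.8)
The plan is to reduce the statement directly to the characterization of $CC_n(X)$ provided by Lemma \ref{lemacc}. Since $A \in \bigcup_{i=1}^\infty CC_i(X)$, I can fix an $n \in \mathbb N$ with $A \in CC_n(X)$, and I then only need to verify the defining condition of semi-local connectivity at $A$: that every open set $U$ with $A \subset U$ contains an open set $V$ with $A \subset V \subset U$ whose complement $X - V$ has a finite number of components.

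First I would dispose of the degenerate case $A = X$. Here the only open subset of $X$ containing $A$ is $X$ itself, so taking $V = X$ gives $X - V = \emptyset$, which trivially has a finite number of components, and the required condition holds vacuously.

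For the main case $A \neq X$, I have $A \in CC_n(X) - \{X\}$, so the forward implication of Lemma \ref{lemacc} applies verbatim: given any open set $U$ with $A \subset U$, it furnishes an open set $V$ with $A \subset V \subset U$ and $X - V \in C_n(X)$. By the definition of $C_n(X)$, the set $X - V$ has at most $n$ components, in particular finitely many. This is precisely the open set $V$ demanded by the definition of semi-local connectivity at $A$, so $X$ is semi-locally connected at $A$.

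I expect no genuine obstacle here, since the real content has already been established in Lemma \ref{lemacc}; the proposition is essentially a restatement of the forward direction of that characterization, once one observes that membership in $C_n(X)$ means having at most $n$, and hence finitely many, components. The only points worth making explicit are the trivial $A = X$ case and this translation from ``at most $n$ components'' to ``a finite number of components'' appearing in the definition of semi-local connectivity.
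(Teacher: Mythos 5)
Your proposal is correct and follows exactly the paper's route: the paper's proof is the single line ``It follows from Lemma \ref{lemacc},'' and you have simply unpacked that reduction, including the trivial $A=X$ case and the observation that $X-V\in C_n(X)$ means $X-V$ has at most $n$, hence finitely many, components. Nothing further is needed.
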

\begin{proof}
It follows from Lemma \ref{lemacc}.
\end{proof}

While it may seem intuitive that the converse of Theorem \ref{equivalencia} holds, this is not the case. In the example \ref{following}, the vertex is a point of semi-local connectivity but does not belong to $\bigcup_{i=1}^\infty CC_i(F_\omega)$.

\begin{example}\label{following}
Define $F_\omega=\bigcup_{n=1}^\infty \{(x,\frac xn):x\in[0,\frac1n]\}$, considered as a subspace of \(\mathbb R^2\).
Let $A=\{(0,0)\}$. Then, $F_\omega$ is semi-locally connected at $A$ but $A\notin \bigcup_{i=1}^\infty CC_i(F_\omega)$.
\end{example}

The following theorem is based on results (6.1) and (6.21) of \cite{whyburn}.

\begin{theorem}\label{semiloc}
Let $X$ be a continuum and $A \in 2^X$. If $X$ is semi-locally connected at $A$, then each component of $X-A$ is a continuumwise connected open set. 
\end{theorem}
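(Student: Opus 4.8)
The plan is to reduce the whole statement to one local construction: for each $x \in X-A$, produce a subcontinuum $C_x \subset X-A$ with $x \in int_X(C_x)$. Once this is available, both the openness and the continuumwise connectivity of the components follow by soft arguments. To build $C_x$, fix a metric $d$ on $X$ and, using compactness of $A$ together with $x \notin A$, choose the open set $U = \{z : d(z,A) < d(x,A)/2\}$, so that $A \subset U$ and $x \notin cl(U)$. Semi-local connectivity at $A$ then yields an open $V$ with $A \subset V \subset U$ such that $X-V$ has finitely many components. Now $X-V$ is compact, and any space with finitely many components has each component clopen (each component is closed, and equals the complement of the finite, hence closed, union of the others). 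Let $C_x$ be the component of $x$ in $X-V$ and put $D = (X-V)-C_x$; then $C_x$ is a continuum, $D$ is closed in $X$, and $C_x \subset X-V \subset X-A$ since $A \subset V$. Because $V \subset U$ gives $x \notin cl(V)$, and because $x \notin D$, the set $N = (X-cl(V)) \cap (X-D)$ is an open neighborhood of $x$; any $z \in N$ lies in $X-V = C_x \sqcup D$ but not in $D$, hence $z \in C_x$. Thus $N \subset C_x$ and $x \in int_X(C_x)$.

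With this in hand, openness of the components is immediate. Let $K$ be a component of $X-A$ and let $x \in K$. The set $C_x$ is connected, contained in $X-A$, and meets $K$, so $C_x \subset K$ by maximality; therefore $int_X(C_x) \subset K$ is a neighborhood of $x$, and $K$ is open. In particular each component is nondegenerate, since it contains the continuum $C_x$.

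For continuumwise connectivity I would introduce, on a fixed component $K$, the relation $x \approx y$ meaning that $x=y$ or some subcontinuum of $K$ contains both $x$ and $y$. This is an equivalence relation, transitivity following from the fact that the union of two subcontinua sharing a point is again a subcontinuum. Each class is open: for $y \in K$ we have $C_y \subset K$ with $y \in int_X(C_y)$, and every point of $C_y$ is $\approx$-related to $y$, so $int_X(C_y)$ is a neighborhood of $y$ contained in $[y]$. Hence $K$ is partitioned into pairwise disjoint nonempty open classes, and since $K$ is connected there is exactly one class; this means any two points of $K$ lie in a common subcontinuum, i.e. $K$ is continuumwise connected.

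The crux is the first step: extracting, from the merely finite (but not uniformly bounded) number of components that semi-local connectivity provides, a single continuum carrying $x$ in its \emph{interior}. The clopen-component observation combined with the separation $x \notin cl(V)$ is exactly what validates the interior claim; after that, the openness of the components and the passage from connectedness to continuumwise connectedness are routine.
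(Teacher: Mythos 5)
Your proof is correct. It rests on the same local fact as the paper's proof: semi-local connectivity at $A$, combined with the observation that a compact set with finitely many components has clopen components, gives each $x\in X-A$ a subcontinuum of $X-A$ containing $x$ in its interior (this is precisely the forward direction of Theorem \ref{apossemi}, that semi-local connectedness at $A$ implies aposyndesis with respect to $A$). The global assembly, however, differs. The paper fixes a nested sequence of open sets $U_1\supset cl(U_2)\supset U_2\supset\cdots$ with $\bigcap_n U_n=A$ and each $X-U_n$ having finitely many components, and for $a$ in a component $D$ of $X-A$ defines $C_a$ as the increasing union over $n$ of the components of $X-U_n$ containing $a$; such a $C_a$ is continuumwise connected by construction (an increasing union of continua), open by the same clopen-plus-separation device you use, and the partition $\{C_a\}$ of $D$ collapses to the single piece $D$ by connectedness, yielding openness and continuumwise connectivity simultaneously. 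You instead invoke semi-local connectivity once per point, deduce openness of the components immediately, and then get continuumwise connectivity from the standard argument that the ``joined by a common subcontinuum'' equivalence classes are open. Your route is slightly more modular and avoids constructing the nested sequence (which itself requires a small compactness argument to arrange $\bigcap_n U_n=A$); the paper's route pays that cost up front in exchange for a single connectedness argument at the end. Both are sound.
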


\begin{proof}
    Let $D$ be a component of $X-A$. Let $\{U_n\}_{n=1}^\infty$ be a sequence of open sets such that for each \(n\), $A\subset U_n $, $cl(U_{n+1}) \subset U_n$, \(A=\bigcap_{n=1}^\infty U_n\) and $X-U_n$ has a finite number of components. For each \(a\in D\), let \(C_a=\bigcup \{K: K\) is the component of \(X-U_n\) containing \(a\), for some \(n\in\mathbb N\}\). Notice that for each \(a,b\in D\), \(a\in C_a\), \(C_a\) is continuumwise connected and \(C_a\cap C_b\neq \emptyset\) implies \(C_a=C_b\).
    
    Now we prove that \(C_a\) is open for each \(a\in D\). If $x \in C_a$, then $x$ is an element the component of \(X-U_n\) containing \(a\), for some \(n\in\mathbb N\). Therefore \(x\notin cl(U_{n+1})\), so \(x\in int(K)\), where \(K\) is the component of \(X-U_{n+1}\). Hence $x \in int(C_a)$.
    
    Since, $D-C_a=\cup \{C_b: b \in D- C_a\}$ is an open set and \(D\) is connected, $C_a$ must be equal to $D$.
\end{proof}

The following theorem is useful to understand better the characteristics of the sets $A$, for which $X$ is semi-locally connected at $A$.

\begin{theorem}\label{apossemi}
Let $X$ be a continuum and $A \in 2^X$. Then, \(X\) is aposyndetic with respect to $A$ if and only if it is semi-locally connected at $A$.
\end{theorem}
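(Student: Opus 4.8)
The plan is to establish the two implications separately. The direction that aposyndesis with respect to $A$ implies semi-local connectedness at $A$ is the routine one, and I would dispatch it by a compactness argument; the reverse implication is where the work lies, since it requires producing, for each $p \in X - A$, a genuine subcontinuum containing $p$ in its interior out of a complement known only to have finitely many components.

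For the implication that aposyndesis with respect to $A$ implies semi-local connectedness at $A$, I would proceed as follows. Let $U$ be an open set with $A \subset U$. Then $X - U$ is compact and contained in $X - A$. For each $p \in X - U$, aposyndesis at $p$ with respect to $A$ furnishes a continuum $B_p \subset X - A$ with $p \in \mathrm{int}_X(B_p)$. The family $\{\mathrm{int}_X(B_p) : p \in X - U\}$ is an open cover of the compact set $X - U$, so finitely many of them, say those indexed by $p_1, \dots, p_m$, already cover $X - U$. Setting $W = \bigcup_{i=1}^m B_{p_i}$, the set $W$ is closed, disjoint from $A$, contains $X - U$, and has at most $m$ components. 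Then $V = X - W$ is open, satisfies $A \subset V \subset U$, and $X - V = W$ has finitely many components; hence $X$ is semi-locally connected at $A$.

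For the reverse implication, fix $p \in X - A$. Since $A$ is closed and $p \notin A$, I can choose an open set $U$ with $A \subset U$ and $p \notin \mathrm{cl}(U)$. Semi-local connectedness at $A$ then yields an open set $V$ with $A \subset V \subset U$ such that $X - V$ has finitely many components. Let $C$ be the component of $X - V$ containing $p$; note $C \subset X - V \subset X - A$. The key observation is that in the compact set $X - V$ every component is closed, and there are only finitely many, so $C$ equals $X - V$ minus the closed union of the remaining components; thus $C$ is open in $X - V$, say $C = O \cap (X - V)$ for some open $O \subset X$. Consequently $\mathrm{int}_X(C) \supseteq O \cap (X - \mathrm{cl}(V))$, and since $p \in O$ while $p \notin \mathrm{cl}(U) \supseteq \mathrm{cl}(V)$, we obtain $p \in \mathrm{int}_X(C)$. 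As $X$ is a nondegenerate continuum it has no isolated points, so $C$ cannot be a singleton and is therefore a subcontinuum; this exhibits aposyndesis at $p$ with respect to $A$, and since $p \in X - A$ was arbitrary, $X$ is aposyndetic with respect to $A$.

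The main obstacle is precisely this last extraction: passing from the merely combinatorial information that $X - V$ has finitely many components to the topological conclusion that $p$ lies in the $X$-interior of its own component. The two ingredients that make it work are the finiteness of the number of components, which forces each component to be clopen in $X - V$, and the deliberate choice of $U$ with $p \notin \mathrm{cl}(U)$, which places $p$ in $\mathrm{int}_X(X - V)$ (that is, off $\mathrm{cl}(V)$), so that openness of $C$ within $X - V$ can be upgraded to $p \in \mathrm{int}_X(C)$.
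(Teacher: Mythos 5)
Your proposal is correct and follows essentially the same route as the paper: a compactness covering argument by the continua witnessing aposyndesis for the forward direction, and, for the converse, choosing $U$ with $p\notin \mathrm{cl}(U)$ and taking the component of $X-V$ containing $p$. The only difference is that you spell out in detail why $p$ lies in the $X$-interior of that component (finitely many components of the compact set $X-V$ are clopen in it, and $p$ avoids $\mathrm{cl}(V)$), a step the paper's proof asserts without elaboration.
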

\begin{proof}
    Assume that $X$ is aposyndetic with respect to $A$ and let $U$ be an open set such that $A \subset U$. Then, for each $p \in X-A$, there exists a subcontinuum $C_p$ of $X$ such that $p \in int(C_p)$ and $C_p\cap A = \emptyset$. Hence, $\{int(C_p): p \in X-A\}$ is an open cover of $X-U$. Therefore, there exists a finite set $\{p_1, \ldots , p_n\} \subset X-A$ such that $X-U \subset \bigcup _{i=1}^nC_{p_i}$. Notice that $V = X-\bigcup _{i=1}^nC_{p_i} \subset U$ and \(X-V\) has at most \(n\) components. In conclusion, $X$ is semi-locally connected at $A$.
    
    Now, suppose that $X$ is semi-locally connected at $A$ and let $x \in X-A$.
    Let $U$ be an open set such that $A \subset U \subset cl(U) \subset X-\{x\}$. Hence, there exists an open set $V$ such that $A \subset V \subset U$ and $X-V$ has a finite number of components. Therefore, the component of $X-V$ containing $x$ is a continuum with $x$ in its interior. We conclude that $X$ is aposyndetic with respect to $A$. 
\end{proof}

\begin{coro}\label{ncccc}
    Let $X$ be a continuum and let $A\in 2^X$ such that $X$ is semi-locally connected at $A$. If $A \in NC_1(X)$, then $A \in CC_1(X)$.
\end{coro}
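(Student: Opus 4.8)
The plan is to combine the two immediately preceding theorems: Theorem \ref{semiloc} supplies continuumwise connectivity of the (unique) component of $X-A$, while Theorem \ref{apossemi} supplies aposyndesis with respect to $A$, which is exactly what produces the interior condition required by the definition of $1$-$Q1$.

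First I would dispose of the trivial case $A=X$, in which $A\in CC_1(X)$ by definition. So assume $A\neq X$. Since $A\in NC_1(X)$, the set $X-A$ is non-empty with at most one component, hence connected. By Theorem \ref{semiloc}, that single component $X-A$ is continuumwise connected; and by Theorem \ref{apossemi}, $X$ is aposyndetic with respect to $A$, so every point of $X-A$ lies in the interior of some subcontinuum of $X$ contained in $X-A$.

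Next I would fix an arbitrary base point $b\in X-A$ and verify that the singleton $\{b\}$ witnesses that $X-A$ is $1$-$Q1$ in the sense of Definition \ref{principal2}.\ref{colocalconnect2}. Given $x\in X-A$, aposyndesis provides a continuum $B_x\subset X-A$ with $x\in int_X(B_x)$, and continuumwise connectivity provides a continuum $E\subset X-A$ containing both $b$ and $x$. Since $B_x$ and $E$ share the point $x$, the union $D:=B_x\cup E$ is a subcontinuum of $X-A$ satisfying $x\in int_X(D)$ and $b\in D$. Thus $\{b\}$ is the required witness and $A\in CC_1(X)$.

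I do not expect any serious obstacle here; the substance is already packaged in Theorems \ref{semiloc} and \ref{apossemi}, and the only point needing care is the bookkeeping of relative interiors. Because $X-A$ is open in $X$, the interior taken in $X-A$ agrees with the interior taken in $X$ for subsets of $X-A$, so the interior conditions transfer without loss. An alternative route would invoke Lemma \ref{lemacc} to produce, for each neighborhood $U$ of $A$, an open $V$ with $A\subset V\subset U$ and $X-V$ connected; but extracting \emph{connectivity} of $X-V$ (rather than merely finitely many components) from semi-local connectivity is less direct than the gluing argument above, so I would prefer the continuumwise-connectivity approach.
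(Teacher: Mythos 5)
Your proof is correct and follows essentially the same route as the paper's: both combine Theorem \ref{semiloc} (continuumwise connectivity of the single component $X-A$) with Theorem \ref{apossemi} (aposyndesis with respect to $A$) and glue the resulting continua to produce the witness for $1$-$Q1$. The only cosmetic difference is that you fix one base point $b$ while the paper joins two arbitrary points $x,y$, which amounts to the same verification of Definition \ref{principal2}.\ref{colocalconnect2}.
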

\begin{proof}
    Let $x,y$ two different points in $X-A$, then by Theorem \ref{semiloc}, there exists a continuum $K$ such that $\{x,y\} \subset K$ and $K \cap A= \emptyset$. By Theorem \ref{apossemi}, there exist two continua $K_x$ and $K_y$ such that $x \in int(K_x)$, $y \in int(K_y)$ and $K_x \cap A = \emptyset=K_y \cap A$. Let $D=K \cup K_x \cup K_y$, observe that $\{x,y\} \in int(D)$ and $D \cap A = \emptyset$. Hence, $A \in CC_1(X)$.
\end{proof}

\begin{coro}\label{semilocallyn}
    Let $X$ be a continuum, $n \in \mathbb N$ and $A \subset X$ such that $X$ is semi-locally connected at $A$. If $A \in  {NC}_n(X)$, then $A \in  {CC}_n(X)$.
\end{coro}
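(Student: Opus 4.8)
The plan is to reduce Corollary~\ref{semilocallyn} to the already-established one-component case, Corollary~\ref{ncccc}, by decomposing $X-A$ into its components and applying Theorem~\ref{equivcomp}. First I would dispose of the trivial case $A=X$. Assuming $A\neq X$, since $A\in NC_n(X)$ we know $X-A$ has some number $m$ of components with $1\le m\le n$; call them $K_1,\dots,K_m$. The goal is to show $A\in CC_m(X)$, which suffices because $CC_m(X)\subset CC_n(X)$ by Theorem~\ref{contentions}.1. By Theorem~\ref{equivcomp} (applied with $H=CC$ and the degree equal to the exact number $m$ of components), it is enough to verify that $A\in CC_1(A\cup K_i)$ for each component $K_i$ of $X-A$.

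For each fixed $i$, I would apply Corollary~\ref{ncccc} inside the continuum $A\cup K_i$. This requires two inputs: that $A\in NC_1(A\cup K_i)$, and that $A\cup K_i$ is semi-locally connected at $A$. The first is immediate, since the complement $(A\cup K_i)-A=K_i$ is connected by construction, so it is a single component. For the second, I would invoke Theorem~\ref{apossemi}, which says semi-local connectedness at $A$ is equivalent to aposyndesis with respect to $A$; thus it suffices to check that $A\cup K_i$ is aposyndetic with respect to $A$. Given a point $p\in K_i$, semi-local connectedness of the ambient $X$ at $A$ yields (via Theorem~\ref{apossemi} again, or directly) a subcontinuum $C\subset X-A$ with $p\in int_X(C)$; since $p$ lies in the component $K_i$ and $C$ is a connected subset of $X-A$ meeting $K_i$, we have $C\subset K_i\subset A\cup K_i$, and the interior of $C$ relative to $A\cup K_i$ still contains $p$. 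Hence $A\cup K_i$ is aposyndetic with respect to $A$, and Corollary~\ref{ncccc} gives $A\in CC_1(A\cup K_i)$.

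Having established $A\in CC_1(A\cup K_i)$ for every component $K_i$, Theorem~\ref{equivcomp} assembles these into $A\in CC_m(X)$, and the inclusion $CC_m(X)\subset CC_n(X)$ finishes the argument. The main obstacle I anticipate is the passage from aposyndesis in the big space $X$ to aposyndesis in each subcontinuum $A\cup K_i$: one must be careful that the witnessing continuum $C$, produced in $X$, actually lands inside $K_i$ and that its interior survives when we restrict the topology to $A\cup K_i$. The key observation making this work is that $C$ avoids $A$ and is connected, so it cannot escape the single component $K_i$ that it meets; and since $K_i$ is open in $X-A$ (components of the open set $X-A$ are open), relative interiors behave well. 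Everything else is bookkeeping: checking the exact-component hypothesis of Theorem~\ref{equivcomp} is met because we work with the precise count $m$ rather than the nominal degree $n$.
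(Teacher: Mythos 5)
Your proposal is correct and follows essentially the same route as the paper's proof: reduce to the components $K_i$ of $X-A$, establish $A\in CC_1(A\cup K_i)$ via the semi-local connectedness/aposyndesis machinery (the paper cites the argument of Theorem~\ref{apossemi} directly, you route it through Corollary~\ref{ncccc}), and reassemble with Theorem~\ref{equivcomp}. Your version is in fact more careful than the paper's, since you explicitly verify that aposyndesis with respect to $A$ passes from $X$ down to each $A\cup K_i$.
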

\begin{proof}
Assumme \(A\in NC_n(X)\). Using the same arguments as in the proof of \ref{apossemi}, we obtain that for each component $K$ of $X-A$, $K \in CC_1(A \cup K)$. Hence, by Theorem \ref{equivcomp}, $A \in CC_n(X)$.
\end{proof}

By Theorem \ref{contentions}, Proposition \ref{equivalencia}, Theorem \ref{apossemi} and Corollary \ref{semilocallyn}, we obtain the following result, which is a characterization of the elements of $CC_n(X)$.

\begin{coro}\label{ccnncnaposyndetic}
Let $X$ be a continuum and $n \in \mathbb N$. Then, $A \in CC_n(X)$ if and only if $X$ is aposyndetic respect to $A$ and $A \in NC_n(X)$.
\end{coro}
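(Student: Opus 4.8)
The plan is to prove Corollary \ref{ccnncnaposyndetic} purely as an assembly of the results already established, so the work lies in identifying which earlier statement supplies each implication. The statement to prove is a biconditional: $A \in CC_n(X)$ holds if and only if $X$ is aposyndetic with respect to $A$ and $A \in NC_n(X)$. I would organize the argument around the two directions and feed each through the chain of equivalences relating colocal connectedness, semi-local connectedness, and aposyndesis.

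For the forward direction, assume $A \in CC_n(X)$. Then trivially $A \in \bigcup_{i=1}^\infty CC_i(X)$, so by Proposition \ref{equivalencia} the continuum $X$ is semi-locally connected at $A$, and by Theorem \ref{apossemi} this is equivalent to $X$ being aposyndetic with respect to $A$. The membership $A \in NC_n(X)$ follows immediately from the containment chain in Theorem \ref{contentions}.2, which gives $CC_n(X) \subset NC_n(X)$. This settles the first half with no genuine computation.

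For the reverse direction, assume $X$ is aposyndetic with respect to $A$ and $A \in NC_n(X)$. By Theorem \ref{apossemi}, the aposyndesis hypothesis is equivalent to $X$ being semi-locally connected at $A$. Now Corollary \ref{semilocallyn} applies verbatim: it states precisely that if $X$ is semi-locally connected at $A$ and $A \in NC_n(X)$, then $A \in CC_n(X)$. This closes the biconditional.

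Since every step is a direct invocation of a prior result, there is no real obstacle to overcome in the proof itself; the only care needed is to cite the correct equivalence (Theorem \ref{apossemi}) to translate between the two equivalent phrasings of the regularity hypothesis, and to note that the forward direction uses the containment from Theorem \ref{contentions} while the backward direction uses Corollary \ref{semilocallyn}. The whole argument is therefore a short bookkeeping proof, and I would simply write it as two or three sentences citing Theorem \ref{contentions}, Proposition \ref{equivalencia}, Theorem \ref{apossemi}, and Corollary \ref{semilocallyn} in turn.
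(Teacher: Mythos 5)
Your proof is correct and is exactly the paper's argument: the paper derives this corollary by citing Theorem \ref{contentions}, Proposition \ref{equivalencia}, Theorem \ref{apossemi}, and Corollary \ref{semilocallyn} in precisely the way you assemble them. Nothing to add.
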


As an application of Corollary \ref{ccnncnaposyndetic} we present a continuum \(X\) in which $ NC_1(X) \cap D_0(X)= CC_1(X) \cap D_0(X)$ and \(X\) is not aposyndetic with respect to some \(A\in D_0(X)\).


\begin{example}
Let $Y$ be the harmonic fan with vertex $v$ and let $X= Y \times [0,1]/(\{v\} \times [0,1])$. Notice that \(X\) is not aposyndetic with respect to \(\{[(v,0)]\}\) and $ NC_1(X) \cap D_0(X)=\{A\in D_0(X):[(v,0)]\notin A\}= CC_1(X) \cap D_0(X)$
\end{example}


\begin{prop}
    Let $X$ be a continuum and $A \in NSC_1(X)$. If \(X\) is aposyndetic at \(p\) with respect to \(A\), for some \(p\in X-A\), then $A \in NB_1^*(X)$. 
\end{prop}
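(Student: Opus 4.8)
The plan is to unwind the definitions and verify that the single point $p$ witnesses that $X-A$ is a $1$-$Q4$ space. Since $p \in X-A$ we have $A \neq X$, so the hypothesis $A \in NSC_1(X)$ says precisely that $X-A$ is $1$-$Q6$, and aposyndesis of $X$ at $p$ with respect to $A$ furnishes a subcontinuum $B_p \subset X-A$ with $p \in int_X(B_p)$. Recall that, because $A \neq X$, to prove $A \in NB_1^*(X)$ it suffices to exhibit a point $b \in X-A$ such that for every non-empty open set $U$ of $X-A$ there is a continuum $D \subset X-A$ with $b \in D$ and $D \cap U \neq \emptyset$. I would take $b = p$.

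First I would fix an arbitrary non-empty open set $U$ of $X-A$. Since $A$ is closed, $X-A$ is open in $X$, and $int_X(B_p)$ is then a non-empty open subset of $X-A$ (as $int_X(B_p) \subset B_p \subset X-A$). I would then apply the $1$-$Q6$ property of $X-A$ to the pair of non-empty open sets $int_X(B_p)$ and $U$: this yields a subcontinuum $D' \subset X-A$ meeting both, that is, $D' \cap int_X(B_p) \neq \emptyset$ and $D' \cap U \neq \emptyset$.

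The key step is to glue $D'$ to the aposyndetic continuum $B_p$. Because $D'$ meets $int_X(B_p) \subset B_p$, the intersection $D' \cap B_p$ is non-empty, so the union $D = D' \cup B_p$ is a subcontinuum of $X-A$; moreover $p \in B_p \subset D$ and $D \cap U \supset D' \cap U \neq \emptyset$. Thus $D$ is the required continuum, so $\{p\}$ witnesses that $X-A$ is a $1$-$Q4$ space, whence $A \in NB_1^*(X)$.

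I do not anticipate a serious obstacle here: the only point requiring care is recognizing that the open set $int_X(B_p)$ supplied by aposyndesis is exactly the right second open set to feed into the $1$-$Q6$ condition, so that the resulting continuum $D'$ can be absorbed into $B_p$ and thereby be forced to contain the base point $p$. Everything else is a routine application of the definitions and the fact that a union of two continua with non-empty intersection is again a continuum.
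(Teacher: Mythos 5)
Your proof is correct and follows essentially the same route as the paper: both take the aposyndetic continuum $B_p$ (the paper calls it $C$), feed $int_X(B_p)$ and $U$ into the $1$-$Q6$ property of $X-A$ to get a continuum $D'$ meeting both, and then use $B_p\cup D'$ with witness $\{p\}$ to verify the $1$-$Q4$ condition. No gaps.
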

\begin{proof}
    Assume that \(X\) is aposyndetic at \(p\) with respect to \(A\), let \(C\subset X-A\) be a continuum containing \(p\) in its interior. Let \(B=\{p\}\). Take \(U\) an open set of \(X-A\).    
    Since \(X-A\) is a $1$-$Q6$ space, for $U$ and \(V=int(C)\), there exists a continuum $D\subset X-A$ such that $D \cap U \neq \emptyset\neq D \cap V$. Then \(E=C\cup D\subset X-A\) is a continuum such that \(b\in int(E)\) and \(E\cap U\neq \emptyset\). Hence, $A \in NB_1^*(X)$.
\end{proof}

In \cite[Proposition 2.2]{noncutshore} the authors proved that $NC_1(X) \cap   F_1(X) = CC_1(X) \cap   F_1(X)$ when \(X\) is aposyndetic. It is natural to ask if the converse is also true. We see in the next example that the answer is negative.

\begin{example}\label{circcantor}
    Let $\mathcal C$ be the Cantor set, $Z = \mathcal C \times \mathcal S^1$ and $X=Z/(\mathcal C \times \{q\})$, where \(q\) is a point in \(\mathcal S^1\). Notice that \(X\) is not aposyndetic at the point \([\mathcal C \times \{q\}]\) and $ NC_1(X) \cap   F_1(X) =\{\{p\}: p \in X-\{[\mathcal C \times \{q\}]\}\}=  CC_1(X) \cap   F_1(X)$.
\end{example}

For irreducible continua, the converse of \cite[Proposition 2.2]{noncutshore} is true, we give a stronger result in the following theorem.

\begin{theorem}\label{caracterizacionarco}
    Let $X$ be an irreducible continuum. If $NC_1(X)\cap F_1(X)=NWC_1(X)\cap F_1(X)$ then $X$ is an arc.
\end{theorem}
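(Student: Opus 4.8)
The plan is to reduce the statement to the classical characterization of the arc as the unique nondegenerate metric continuum with exactly two non-cut points, and to obtain that count from the hypothesis together with irreducibility. Since $X$ is a nondegenerate irreducible continuum, fix distinct points $p,q\in X$ such that $X$ is irreducible between $p$ and $q$; the whole argument will consist in showing that the set of non-cut points of $X$ is exactly $\{p,q\}$.

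First I would unwind the hypothesis. Because $NWC_1(X)\subset NC_1(X)$ always (Theorem \ref{contentions}), the assumed equality $NC_1(X)\cap F_1(X)=NWC_1(X)\cap F_1(X)$ says precisely that every non-cut point is a non-weak-cut point: whenever $X-\{x\}$ is connected it is in fact $1$-$Q2$. For $n=1$, the space $X-\{x\}$ being $1$-$Q2$ means there is a point $b$ to which every point of $X-\{x\}$ is joined by a subcontinuum contained in $X-\{x\}$, and taking unions of two such subcontinua through $b$ shows that $X-\{x\}$ is continuumwise connected.

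The core step is to prove that no point of $X-\{p,q\}$ is a non-cut point. Suppose, to get a contradiction, that $x\in X-\{p,q\}$ is a non-cut point. By the previous paragraph $X-\{x\}$ is continuumwise connected, and since $p,q\in X-\{x\}$ there is a subcontinuum $K\subset X-\{x\}$ containing both $p$ and $q$. As $x\notin K$, $K$ is a proper subcontinuum of $X$ containing $\{p,q\}$, contradicting the irreducibility of $X$ between $p$ and $q$. Hence the set of non-cut points of $X$ is contained in $\{p,q\}$, so $X$ has at most two non-cut points. On the other hand, by Moore's theorem \cite{moore} every nondegenerate continuum has at least two non-cut points. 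Combining the two bounds, $X$ has exactly two non-cut points, and therefore $X$ is an arc by the classical two-non-cut-point characterization of the arc (see \cite{nadler}).

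I expect the difficulties to be organizational rather than conceptual. The delicate points are only to verify carefully that $1$-$Q2$ for $X-\{x\}$ really yields continuumwise connectedness, and to pin down precise references for Moore's theorem and for the characterization of the arc by two non-cut points. The genuinely essential ingredient is the short contradiction in the core step, where irreducibility converts ``$X-\{x\}$ is continuumwise connected'' into ``$x$ cannot be a non-cut point unless $x\in\{p,q\}$''; that is the place where the hypothesis and irreducibility interact, and everything else is standard continuum theory.
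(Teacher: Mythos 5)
Your proof is correct and follows essentially the same route as the paper's: both arguments use the hypothesis to upgrade non-cut points to non-weak-cut points, observe that $1$-$Q2$ of $X-\{x\}$ gives continuumwise connectedness and hence (by irreducibility) forces every point outside the pair of irreducibility to be a cut point, and then conclude via the classical two-non-cut-point characterization of the arc. The only difference is bookkeeping and citations (you invoke Moore and Nadler where the paper cites Bing's results), which does not change the substance.
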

\begin{proof}
    By Corollary 2 of \cite{scc}, let $p, q \in X$ be distinct such that $\{p\}, \{q\} \in NC_1(X)$. Hence, $\{p\}, \{q\} \in NWC_1(X)$. Given that $\{p\}, \{q\} \in NWC_1(X)$, $X$ must be irreducible only between $p$ and $q$. Since $NC_1(X)\cap F_1(X)=NWC_1(X) \cap F_1(X)$ and $X$ is irreducible between $p$ and $q$, if $z \in X-\{p,q\}$, then \(\{z\}\notin NWC_1(X)\), so $\{z\}\notin NC_1(X)$. Therefore, by Section 3 of \cite{scc}, $X$ must be an arc.
\end{proof}

With Theorem \ref{caracterizacionarco}, we can provide a partial answer to Question \ref{pregunta}.

\begin{coro}\label{respuestaparcial1}
    Let $X$ be an irreducible continuum. Then $NC_1(X)=NWC_1(X)$ if and only if $X$ is an arc.
\end{coro}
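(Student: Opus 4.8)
The plan is to prove Corollary~\ref{respuestaparcial1} by combining Theorem~\ref{caracterizacionarco} with Theorem~\ref{localyconnect}, treating the two directions of the biconditional separately. For the forward direction, I would suppose $NC_1(X)=NWC_1(X)$ for the irreducible continuum $X$. Intersecting both sides with $F_1(X)$ immediately yields $NC_1(X)\cap F_1(X)=NWC_1(X)\cap F_1(X)$, which is precisely the hypothesis of Theorem~\ref{caracterizacionarco}. That theorem then concludes that $X$ is an arc, so this direction is essentially a one-line invocation of the already-established result.

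For the converse direction, I would assume $X$ is an arc. The key observation is that an arc is a locally connected continuum, so Theorem~\ref{localyconnect} applies directly and gives $NC_1(X)=CC_1(X)$. By Theorem~\ref{contentions}, we always have the chain of inclusions $CC_1(X)\subset NWC_1(X)\subset NC_1(X)$. Combining these, $NC_1(X)=CC_1(X)\subset NWC_1(X)\subset NC_1(X)$ forces all the inclusions to be equalities; in particular $NWC_1(X)=NC_1(X)$, which is the desired conclusion.

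I do not anticipate a genuine obstacle here, since the corollary is explicitly advertised in the text as following ``with Theorem~\ref{caracterizacionarco}'' and the converse is a soft consequence of local connectedness together with the standard inclusion diagram. The only point requiring minor care is making sure the intersection-with-$F_1(X)$ step in the forward direction is legitimate: set equality $NC_1(X)=NWC_1(X)$ trivially descends to equality after intersecting with any fixed subspace such as $F_1(X)$, so no subtlety arises. If I wanted to be fully self-contained I would also note that the arc is nondegenerate and irreducible (between its two endpoints), so the hypotheses of both invoked results are genuinely met, but this is routine.

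\begin{proof}
Suppose first that $NC_1(X)=NWC_1(X)$. Intersecting both sides with $F_1(X)$ yields $NC_1(X)\cap F_1(X)=NWC_1(X)\cap F_1(X)$. Since $X$ is irreducible, Theorem~\ref{caracterizacionarco} implies that $X$ is an arc.

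Conversely, suppose that $X$ is an arc. Then $X$ is a locally connected continuum, so by Theorem~\ref{localyconnect} we have $NC_1(X)=CC_1(X)$. By Theorem~\ref{contentions}, $CC_1(X)\subset NWC_1(X)\subset NC_1(X)$. Therefore $NC_1(X)=CC_1(X)\subset NWC_1(X)\subset NC_1(X)$, which forces $NWC_1(X)=NC_1(X)$.
\end{proof}
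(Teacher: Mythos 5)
Your proof is correct and follows exactly the route the paper intends: the forward direction is the immediate specialization of Theorem~\ref{caracterizacionarco} after intersecting with $F_1(X)$, and the converse follows from local connectedness of the arc via Theorem~\ref{localyconnect} and the inclusion chain in Theorem~\ref{contentions}. The paper leaves this corollary without an explicit proof, and your argument supplies precisely the intended one.
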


The following example shows that Theorem \ref{caracterizacionarco} does not hold if we replace the hyperspace \(NWC_1(X)\) by a weaker one.
\begin{example}
    If \(X\) is a dyadic solenoid, then \(X\) is irreducible, and \(NC_1(X)\cap F_1(X)=F_1(X)=NB_1(X)\cap F_1(X)\).
\end{example}

Also, for irreducible continua, we have the following results.

\begin{theorem}\label{nwccc}
    Let $X$ be an irreducible continuum. Then $ NWC_1(X) = CC_1(X)$.
\end{theorem}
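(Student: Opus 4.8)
The inclusion $CC_1(X)\subseteq NWC_1(X)$ is Theorem \ref{contentions}, and the case $A=X$ is trivial, so the entire content is the reverse inclusion $NWC_1(X)\subseteq CC_1(X)$ for $A\neq X$. My plan is to reduce this to a statement about how a closed set $A$ with $X-A$ continuumwise connected can sit inside $X$, and then to exploit the essentially linear layer structure of an irreducible continuum. Recall that $A\in NWC_1(X)$ says exactly that $X-A$ is continuumwise connected; by Corollary \ref{ccnncnaposyndetic} (with $n=1$) together with $NWC_1(X)\subseteq NC_1(X)$ (Theorem \ref{contentions}) it would suffice to show that $X$ is aposyndetic with respect to $A$, which by Theorem \ref{apossemi} is the same as semi-local connectedness at $A$. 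I will instead aim for the stronger conclusion through Lemma \ref{lemacc}: for every open $U\supseteq A$ I will produce an open $V$ with $A\subseteq V\subseteq U$ and $X-V\in C_1(X)$, which gives $A\in CC_1(X)$ directly.

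First I would dispose of the indecomposable case: if $X$ is indecomposable then by Note \ref{indecompo} both $NWC_1(X)$ and $CC_1(X)$ equal $\{X\}$, and the equality is immediate. Hence I may assume $X$ is a \emph{decomposable} continuum irreducible between two points $p$ and $q$. For such a continuum I would invoke the classical monotone \emph{layer} (tranche) decomposition of an irreducible continuum (see the treatment of irreducible continua in \cite{nadler}): there is a monotone surjection $\pi\colon X\to[0,1]$ whose fibers $L_t=\pi^{-1}(t)$ are nowhere dense subcontinua, with $p\in L_0$ and $q\in L_1$, and such that the preimage of every subinterval of $[0,1]$ is a subcontinuum of $X$. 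In particular $\pi^{-1}([a,b])$ is a continuum containing the open set $\pi^{-1}((a,b))$ in its interior, which is the source of the thick subcontinua I need.

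Now let $A\in NWC_1(X)$ with $A\neq X$, so $X-A$ is continuumwise connected. Then $J:=\pi(X-A)$ is a subinterval of $[0,1]$, and for every $t\notin J$ the whole layer $L_t$ is contained in $A$. The key claim is that $A$ can only meet the \emph{extreme} layers, i.e. $A\cap L_t=\emptyset$ for every $t$ in the interior of $J$, equivalently $A\subseteq\pi^{-1}([0,1]\setminus\operatorname{int}J)$. Granting this, $[0,1]\setminus\operatorname{int}J$ is a union of at most two closed end-intervals; each layer over it lies in $A\subseteq U$, and upper semicontinuity of the decomposition lets me enlarge these end-intervals slightly to an open $N\supseteq[0,1]\setminus\operatorname{int}J$ with $\pi^{-1}(N)\subseteq U$. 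Taking $V=\pi^{-1}(N)$ then gives $A\subseteq V\subseteq U$ with $X-V=\pi^{-1}([0,1]\setminus N)$ the preimage of a single compact subinterval of $\operatorname{int}J$, hence a subcontinuum; by Lemma \ref{lemacc} this yields $A\in CC_1(X)$.

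The heart of the argument, and the step I expect to be the main obstacle, is the key claim of the previous paragraph: continuumwise connectedness of $X-A$ together with irreducibility forbids $A$ from occupying a proper nonempty piece of an interior layer $L_{t_0}$ with $t_0\in\operatorname{int}J$. The intuition is that the layers are \emph{linearly ordered} by $\pi$, so any subcontinuum joining a point with $\pi$-value below $t_0$ to one above $t_0$ must meet $L_{t_0}$, and there is no way to route around $L_{t_0}$ as one can in a non-irreducible continuum such as the harmonic fan. I would make this precise by splitting $X$ along $L_{t_0}$ into the half-continua $\pi^{-1}([0,t_0])$ and $\pi^{-1}([t_0,1])$, using that each is irreducible up to the layer $L_{t_0}$, and showing that a point of $L_{t_0}\cap A$ would obstruct the continua witnessing continuumwise connectedness for points of $L_{t_0}\cap(X-A)$ on the two sides. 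The $\sin(1/x)$ continuum, where a partial limit segment lies in $NC_1(X)$ but not in $NWC_1(X)$, is the model example showing that continuumwise connectedness (not mere connectedness) is exactly what makes this claim true, which is also why the theorem fails with $NC_1(X)$ in place of $NWC_1(X)$ (compare Corollary \ref{respuestaparcial1}).
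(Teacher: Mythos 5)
Your reduction to the decomposable irreducible case and your "key claim" are the right instincts, but the argument rests on a tool that does not exist in the generality you need. The layer (tranche) decomposition you invoke --- a monotone surjection $\pi\colon X\to[0,1]$ whose fibers are nowhere dense subcontinua, with preimages of subintervals being subcontinua --- is available only for irreducible continua of type $\lambda$, i.e.\ those in which every indecomposable subcontinuum is nowhere dense. Being decomposable does not put you in that class. For instance, if $X=Y_1\cup Y_2$ where $Y_1,Y_2$ are indecomposable continua meeting in a single point, then $X$ is decomposable and irreducible (between points in suitable composants), yet every monotone map from $X$ to $[0,1]$ is constant: a monotone image of an indecomposable continuum is indecomposable or degenerate, so each $Y_i$ must collapse to a point. (This is exactly the continuum appearing in the paper's example after Note \ref{indecompo}, where the theorem happens to hold because both hyperspaces are $\{X\}$ --- but your proof cannot see that.) Similarly, an indecomposable continuum with an arc attached is decomposable and irreducible and admits only a $\pi$ whose fiber over $0$ has nonempty interior. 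So after discarding the indecomposable case via Note \ref{indecompo}, you have not earned the decomposition your entire argument runs on. (Where $\pi$ does exist, your key claim is fine: for $s_1<t_0<s_2$ in $J$ and a continuum $K\subset X-A$ with $\pi(K)\supseteq[s_1,s_2]$, irreducibility forces $K\cup\pi^{-1}([0,s_1])\cup\pi^{-1}([s_2,1])=X$ and hence $\pi^{-1}((s_1,s_2))\subseteq K$, so no interior layer can meet $A$.)

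There is also a secondary flaw in the final construction: you assert that every layer over $[0,1]\setminus int(J)$ lies in $A\subseteq U$. That fails for an endpoint of $J$ that belongs to $J$: such a layer meets $X-A$ and need not be contained in $U$. Take $X$ the $\sin(1/x)$-continuum and $A$ the whole limit segment, so $J=(0,1]$; then $L_1=\{(1,\sin 1)\}$ is a point of $X-A$, and $V=\pi^{-1}(N)$ for a neighborhood $N$ of $\{0,1\}$ is not contained in a small $U\supseteq A$. This piece is patchable, but the missing decomposition is not. For comparison, the paper avoids all of this machinery: it first shows each component of $A$ must contain one of the two points of irreducibility (via Theorem \ref{component}), so $A$ has at most two components, and then, for a point $x\in X-A$ with a small closed neighborhood $cl(V)$ missing $A$, it uses the component(s) $K$ of $X-V$ containing $A$ together with a witnessing continuum $D\subset X-A$ from the $NWC_1$ hypothesis; irreducibility forces $D\cup K=X$ (or $D\cup K_E\cup K_F=X$), hence $V\subseteq D$ and $x\in int(D)$. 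That elementary argument works uniformly for all irreducible continua, with no case split on decomposability and no appeal to the theory of tranches.
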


\begin{proof}
The contention $CC_1(X) \subset NWC_1(X)$ follows from Theorem \ref{contentions}.

We prove $NWC_1(X) \subset CC_1(X)$. Assume that $X$ is irreducible between $a$ and $b$ and let \(A\in NWC_1(X)\). 

\textbf{Claim 1}: If \(B\) is a component of \(A\), then \(a\in B\) or \(b\in B\). 

Proof of Claim 1. Assume \(B\) is a component of \(A\); by Theorem \ref{component}, \(B\in NWC_1(X)\). If \(a\notin B\) and \(b\notin B\), then there exists a continuum \(D\subset X-B\) such that \(a,b\in D\), which is a contradiction to the irreducibility of \(X\). Thus \(a\in B\) or \(b\in B\).

\textbf{Claim 2}: The set $A$ has at most two components. 

Proof of Claim 2. By Claim 1, each component of \(A\) contains \(a\) or contains \(b\), so \(A\) has at most two components.

\textbf{Claim 3}: If $A$ is connected, then $A \in CC_1(X)$.

Proof of Claim 3. Without loss of generality, assume $a \in A$. If $b \in A$, then $B = X \in CC_1(X)$. Assume $b \notin A$, let $x \in X - A$ and let \(E\subset X-A\) be a continuum with \(b,x\in E\). Let $V$ be an open set such that $x \in V$ and $cl(V) \cap A=\emptyset$. Let $K$ be the component of $X - V$ that contains $A$, and observe that $b \notin K$. Let $k \in K-A$; since $A \in NWC_1(X)$, there exists a continuum $D \subset X-A$ such that $b,k \in D$. As $X$ is irreducible between $a,b$, we have $D \cup K=X$, $b \notin K$ and $K \cap V=\emptyset$, so $V \subset D$, which implies $x \in int(D)$. So \(E\cup D\) is a continuum with \(b\in E\cup D\) and \(x\in int(E\cup D)\), so $A \in CC_1(X)$.

\textbf{Claim 4}: If $A$ is not connected, then $A \in CC_1(X)$.

Proof of Claim 4. Assume \(A\) is not connected. By Claim 2, \(A\) has two components $E$ and $F$. By Claim 1, without loss of generality assume $a\in E$ and $b\in F$. Let \(p\in X-A\), let $x \in X-A$ and $V$ be an open set such that $x \in V \subset cl(V) \subset X-A$. Let $K_E$ and $K_F$ be the components of $X-V$ containing $E$ and $F$ respectively. Since $K_E \cup K_F \neq X$, $K_E \cap K_F = \emptyset$. Take $e \in K_E-A$ and $f \in K_F-A$. Since $A \in NWC_1(X)$, there exists a continuum $D\subset X-A$ containing $\{e,f\}$. Therefore $D \cup K_E \cup K_F$ is a continuum containing $a$ and $b$. Since $X$ is irreducible between $a$ and $b$, $D \cup K_E \cup K_F=X$. Hence $V \subset D$. This implies $A \in CC_1(X)$.

From Claims 1,2,3,4 and 5, we conclude that $A \in NWC_1(X)$ implies $A \in CC_1(X)$. 
\end{proof}

The following example shows that the previous result does not hold if we replace $1$ by $n>1$.
\begin{example}
Let $X=\{(x,sin(\frac1x)): x \in (0,1]\} \cup \{(0,x): x \in [-1,1]\}$. Notice that \(X\) is irreducible and \(\{(0,-1)\}\in NWC_2(X)-CC_2(X)\).
\end{example}

\begin{theorem}\label{ultimoteorema}
    Let $X$ be a decomposable continuum and let $C \in NB_1(X)$. If $A,B \in C(X)-\{X\}$ are such that $A \cup B =X$ and $C \cap B = \emptyset$, then $C \in NWC_1(X)$. 
\end{theorem}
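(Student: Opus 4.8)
The plan is to verify directly the witnessing condition behind $NWC_1$, namely that $X-C$ is $1$-$Q2$, using the subcontinuum $B$ as a fixed ``hub''. First I would dispose of the trivial case: since $B$ is a nonempty continuum and $C\cap B=\emptyset$, we have $B\subset X-C$, so $X-C\neq\emptyset$ and in particular $C\neq X$; hence both the hypothesis and the conclusion reduce to their nontrivial clauses, so $X-C$ is $1$-$Q3$ and I must show $X-C$ is $1$-$Q2$. Recall that for $n=1$ the condition $1$-$Q2$ for the space $X-C$ asks for a single point $b\in X-C$ such that every $x\in X-C$ lies in a subcontinuum of $X-C$ meeting $\{b\}$; equivalently, every point of $X-C$ can be joined to $b$ by a subcontinuum of $X-C$. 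I would reduce this to showing that every $x\in X-C$ lies in a subcontinuum $D\subset X-C$ with $D\cap B\neq\emptyset$: fixing any $b\in B$, the set $D\cup B$ is then a subcontinuum of $X-C$ containing both $x$ and $b$ (it is connected since $D$ and $B$ meet, and it lies in $X-C$ since both summands do), so $\{b\}$ serves as the universal witness.

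The key observation is the choice of the open set to feed into the nonblock hypothesis. From $C\cap B=\emptyset$ and $A\cup B=X$ we get $C\subset X-B\subset A$, hence $X-A\subset X-C$. Moreover $X-A$ is nonempty (as $A\neq X$) and open in $X$, and it is contained in $B$; being a subset of $X-C$ that is open in $X$, it is in particular open in the subspace $X-C$. Thus $U:=X-A$ is a nonempty open subset of $X-C$ with $U\subset B$.

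Finally I would invoke $C\in NB_1(X)$, that is, the fact that $X-C$ is $1$-$Q3$ in the sense of Definition \ref{principal2}. Given $x\in X-C$, applying this property to the point $x$ (whose witnessing singleton is $\{x\}$) and to the open set $U$ yields a subcontinuum $D\subset X-C$ with $x\in D$ and $D\cap U\neq\emptyset$; since $U\subset B$, this forces $D\cap B\neq\emptyset$, which is exactly the statement the reduction in the first paragraph needs. Hence $X-C$ is $1$-$Q2$ and $C\in NWC_1(X)$. I expect the only genuine obstacle to be conceptual rather than computational: recognizing that $X-A$, the complement of the ``other half'' of the decomposition, is precisely an open set trapped inside $B$ and disjoint from $C$, and that a single point of the continuum $B$ can act as a global $1$-$Q2$ witness. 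The remaining verifications (openness of $X-A$ in $X-C$, and that the union of two meeting subcontinua of $X-C$ stays inside $X-C$) are routine.
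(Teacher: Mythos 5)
Your proposal is correct and follows essentially the same route as the paper: both exploit that $X-A\subset B$ gives a nonempty open set inside $B$ (and hence inside $X-C$), feed it into the $1$-$Q3$ property of $X-C$ to obtain, for each $x\in X-C$, a subcontinuum of $X-C$ through $x$ meeting $B$, and then use $B$ itself as the connecting hub for the $1$-$Q2$ witness. Your write-up merely makes explicit the final step (taking $D\cup B$ and a fixed $b\in B$) that the paper leaves implicit.
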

\begin{proof}
    Let $p \in X-C$, given that $C \in NB_1(X)$ and $int(B) \neq \emptyset$, there exists a continuum $K \subset X-C$, such that $p \in K$ and $K \cap B \neq \emptyset$. Therefore, $C \in NWC_1(X)$.
\end{proof}

\begin{coro}
    Let $X$ be a decomposable, irreducible continuum and let $C \in NB_1(X)$. If $A,B \in C(X)-\{X\}$ are such that $A \cup B =X$ and $C \cap B = \emptyset$, then $C \in CC_1(X)$. 
\end{coro}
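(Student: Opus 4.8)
The plan is to chain together the two immediately preceding results, since the hypotheses of this corollary are exactly those of Theorem \ref{ultimoteorema} augmented by the single extra assumption that $X$ is irreducible. First I would observe that the corollary inherits all the data needed for Theorem \ref{ultimoteorema}: $X$ is decomposable, $C \in NB_1(X)$, and $A, B \in C(X) - \{X\}$ satisfy $A \cup B = X$ and $C \cap B = \emptyset$. Applying that theorem verbatim yields $C \in NWC_1(X)$. Note that the irreducibility of $X$ plays no role in this first step; only decomposability is consumed here.

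Next I would invoke the irreducibility. Since $X$ is irreducible, Theorem \ref{nwccc} gives the equality $NWC_1(X) = CC_1(X)$ as hyperspaces. Combining this with the conclusion of the previous paragraph, $C \in NWC_1(X) = CC_1(X)$, which is exactly the desired conclusion.

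Because the statement is a direct composition of Theorem \ref{ultimoteorema} (which supplies the $NWC_1$ membership from the decomposable factorization $A \cup B = X$ with $C \cap B = \emptyset$) and Theorem \ref{nwccc} (which collapses $NWC_1$ onto $CC_1$ under irreducibility), I do not anticipate any genuine obstacle in the argument itself. The only point deserving care is bookkeeping: one must confirm that the hypotheses of Theorem \ref{ultimoteorema} do not secretly require irreducibility and that Theorem \ref{nwccc} applies to the \emph{whole} hyperspace $NWC_1(X)$ rather than to a restricted subfamily, so that the membership $C \in NWC_1(X)$ indeed transfers. Both checks are immediate from the statements of those results, so the proof reduces to citing them in this order.
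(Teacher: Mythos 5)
Your proposal is correct and coincides with the paper's own proof, which likewise obtains $C \in NWC_1(X)$ from Theorem \ref{ultimoteorema} and then concludes via the equality $NWC_1(X) = CC_1(X)$ of Theorem \ref{nwccc} for irreducible continua. Your bookkeeping checks are sound and no further argument is needed.
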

\begin{proof}
Apply Theorem \ref{nwccc} and Theorem \ref{ultimoteorema}. 
\end{proof}

\begin{coro}\label{nbcc}
    Let $X$ be a decomposable continuum and irreducible between $p$ and $q$. If $\{p\} \in NB_1(X)$, then $\{p\} \in CC_1(X)$.
\end{coro}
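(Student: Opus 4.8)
The plan is to chain together the two immediately preceding results, Theorem \ref{ultimoteorema} and Theorem \ref{nwccc}. Since $X$ is irreducible, Theorem \ref{nwccc} gives $NWC_1(X)=CC_1(X)$, so it suffices to show $\{p\}\in NWC_1(X)$. To obtain this from Theorem \ref{ultimoteorema} applied with $C=\{p\}$ (whose hypothesis $\{p\}\in NB_1(X)$ is exactly what we are given), I must produce proper subcontinua $A,B$ of $X$ with $A\cup B=X$ and $\{p\}\cap B=\emptyset$, that is, $p\notin B$.

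First I would use that $X$ is decomposable to fix any decomposition $X=A\cup B$ into proper subcontinua $A,B\in C(X)-\{X\}$. The key observation is then purely a consequence of irreducibility between $p$ and $q$: neither $A$ nor $B$ can contain both $p$ and $q$, since each is a proper subcontinuum and $X$ is irreducible about $\{p,q\}$. As $p,q\in A\cup B$, after relabelling $A$ and $B$ if necessary we may assume $p\in A$ and $q\in B$; and since $q\in B$ while $B$ cannot contain both $p$ and $q$, we conclude $p\notin B$. Hence $\{p\}\cap B=\emptyset$, which is precisely the missing hypothesis of Theorem \ref{ultimoteorema}.

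With $C=\{p\}$, $A$, and $B$ in hand, Theorem \ref{ultimoteorema} yields $\{p\}\in NWC_1(X)$, and Theorem \ref{nwccc} then upgrades this to $\{p\}\in CC_1(X)$, completing the proof. The only substantive point is the irreducibility bookkeeping ensuring that $p$ lands in exactly one of the two pieces of the decomposition; everything else is a direct appeal to the two cited results. I expect this labelling step to be the crux, though it is elementary once the irreducibility constraint on proper subcontinua is used.
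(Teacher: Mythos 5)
Your proof is correct and follows exactly the route the paper intends: the paper leaves Corollary \ref{nbcc} without an explicit proof, treating it as an instance of the preceding (unlabeled) corollary, which is itself proved by combining Theorem \ref{ultimoteorema} with Theorem \ref{nwccc}. Your only added content is the bookkeeping that in any decomposition $X=A\cup B$ into proper subcontinua, irreducibility between $p$ and $q$ forces (after relabelling) $p\in A-B$ and $q\in B-A$, so that $\{p\}\cap B=\emptyset$; this is exactly the step the paper implicitly relies on, and your argument for it is sound.
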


The next example shows that in Corollary \ref{nbcc}, we can not replace the condition irreducible between $p$ and $q$ by the condition irreducible about $A \in 2^X$.
\begin{example}
Let \(X = Y \cup I\), where \(Y\) is the dyadic solenoid and \(I\) is an arc such that \(Y \cap I=\{p\}\). In this case, \(X\) is irreducible about \(I\cup \{x\}\), where \(x\in Y\) and \(p\) are in distinc composants of \(Y\). Notice \(I \in NB_1(X)\) but \(I \notin NWC_1(X)\).
\end{example}

    
By Theorem \ref{nbcc} and Lemma 3.10 from \cite{noncutshore}, we obtain the following result.
\begin{coro}
    Let $X$ be a decomposable continuum and irreducible between $p$ and $q$. If $\{p\} \in NB_1(X)$, then $X$ is locally connected at $p$.
\end{coro}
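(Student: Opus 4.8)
The plan is to split the argument into two independent pieces, connecting $NB_1$-membership to colocal connectedness and then colocal connectedness to local connectedness. First I would observe that the hypotheses here are exactly those of Corollary \ref{nbcc}: the continuum $X$ is decomposable and irreducible between $p$ and $q$, and $\{p\}\in NB_1(X)$. Applying Corollary \ref{nbcc} directly yields $\{p\}\in CC_1(X)$, so the whole problem reduces to showing that a point $p$ with $\{p\}\in CC_1(X)$, under the standing hypotheses, is a point of local connectedness.

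Next I would unpack what $\{p\}\in CC_1(X)$ means in neighborhood terms, since that is the form in which it is usable. By Lemma \ref{lemacc} applied with $n=1$ and $A=\{p\}$, the membership $\{p\}\in CC_1(X)$ is equivalent to the statement that for every open set $U$ with $p\in U$ there is an open set $V$ with $p\in V\subset U$ and $X-V\in C(X)$. Thus $p$ is a point of colocal connectedness in the classical sense: $p$ has arbitrarily small neighborhoods whose complements are subcontinua.

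The final and decisive step is to pass from connected complements to connected neighborhoods, that is, from colocal connectedness at $p$ to local connectedness at $p$; this is exactly the content supplied by Lemma 3.10 of \cite{noncutshore}, which I would invoke with the colocal connectedness just established to conclude. I expect this to be the main obstacle, because colocal connectedness does \emph{not} imply local connectedness in general: knowing that $X-V$ is a continuum says nothing a priori about whether the component of $V$ containing $p$ is itself open. The hypothesis that $X$ is irreducible between $p$ and $q$ is what rigidifies the local structure of $X$ near $p$ (forcing an essentially arc-like approach to $p$, as one sees through the layer decomposition of an irreducible continuum), and this is precisely the ingredient that Lemma 3.10 of \cite{noncutshore} packages. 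A self-contained alternative would be to reprove that bridge directly via the monotone decomposition of the irreducible continuum $X$ into tranches, but I would prefer to cite Lemma 3.10 and keep the proof to the two applications above.
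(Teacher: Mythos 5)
Your proposal is correct and follows exactly the paper's route: Corollary \ref{nbcc} gives $\{p\}\in CC_1(X)$ under these hypotheses, and Lemma 3.10 of \cite{noncutshore} (which uses the irreducibility between $p$ and $q$) then upgrades this to local connectedness at $p$. The intermediate translation via Lemma \ref{lemacc} is harmless but not needed; the paper cites the two results directly.
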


\end{document}